\theoremstyle{plain}
\newtheorem{theorem}{Theorem}[section]
\newtheorem{lemma}[theorem]{Lemma}
\theoremstyle{definition}
\newtheorem{definition}[theorem]{Definition}
\newtheorem{example}[theorem]{Example}
\theoremstyle{remark}
\newtheorem{remark}{Remark}
\newcommand\eqdef{\mathrel{\stackrel{\makebox[0pt]{\mbox{\normalfont\tiny def}}}{=}}}
\newcommand*{\R}{{\mathbb R}}
\newcommand*{\e}{\varepsilon}
\newcommand*{\la}{\langle}
\newcommand*{\ra}{\rangle}
\newcommand*{\norm}[1]{\left\lVert#1\right\rVert}
\DeclareMathOperator*{\argmin}{argmin}
\def\pd#1{{\color{black}#1}} 
\def\gav#1{{\color{black}#1}}
\def\dd#1{{\color{black}#1}} 
\def\att#1{{\color{black}#1}}
\def\attt#1{{\color{black}#1}}
\def\ga#1{{\color{black}#1}}
\def\rev1#1{{\color{black}#1}}
\def\revv#1{{\color{black}#1}} 
\def\moh#1{{\color{black}#1}} 
\def\revvat#1{{\color{black}#1}} 
\def\dip#1{{\color{black}#1}} 
\begin{document}

\articletype{ARTICLE TEMPLATE}

\title{Inexact Relative Smoothness and Strong Convexity for Optimization and Variational Inequalities by Inexact Model\footnote{The work in Sections 2, 3.1, Definitions 4.1 and 4.6, Algorithm 3 and Appendix C was supported by the Ministry of Science and Higher Education of the Russian Federation (Goszadaniye in MIPT, project 075-00337-20-03). The work in Sections 3.2 and 3.3 was funded by Russian Foundation for Basic Research grant, project number 19-31-90062. The work in Definition 4.9 and Theorems 4.8, 4.11 was supported by the grant of the President of the Russian Federation, code MC-15.2020.1. The work in Section 5 was supported by Russian Science Foundation, project number 18-71-10044. The work in Section 6 was supported by Russian Foundation for Basic Research, project number 18-29-03071 mk}.
}

\author{
\name{Fedor Stonyakin\textsuperscript{a,b}\thanks{CONTACT Fedor Stonyakin. Email: fedyor@mail.ru} and Alexander Tyurin\textsuperscript{b,d} and Alexander Gasnikov\textsuperscript{b,c,d} and Pavel Dvurechensky\textsuperscript{e,c} and Artem Agafonov\textsuperscript{b} and Darina Dvinskikh\textsuperscript{b,e} and Mohammad Alkousa\textsuperscript{d,b} and Dmitry Pasechnyuk\textsuperscript{b}
and Sergei Artamonov\textsuperscript{d}
and Victorya Piskunova\textsuperscript{a}}
\affil{
\textsuperscript{a}V. Vernadsky Crimean Federal University, Simferopol, Republic of Crimea; \textsuperscript{b}Moscow Institute of Physics and Technology, Dolgoprudny, Russia;
\textsuperscript{c}Institute for Information Transmission Problems, Moscow, Russia;
\textsuperscript{d}National Research University Higher School of Economics, Moscow, Russia;
\textsuperscript{e}Weierstrass Institute for Applied Analysis and Stochastics, Berlin, Germany
}
}

\maketitle

\begin{abstract}
In this paper, we propose a general algorithmic framework for first-order methods in optimization in a broad sense, including minimization problems, saddle-point problems, and variational inequalities. This framework allows obtaining many known methods as a special case, the list including accelerated gradient method, composite optimization methods, level-set methods, Bregman proximal methods. The idea of the framework is based on constructing an inexact model of the main problem component, i.e. objective function in optimization or operator in variational inequalities. Besides reproducing known results, our framework allows constructing new methods, which we illustrate by constructing a universal conditional gradient method and a universal method for variational inequalities with a composite structure. This method works for smooth and non-smooth problems with optimal complexity without a priori knowledge of the problem's smoothness. As a particular case of our general framework, we introduce relative smoothness for operators and propose an algorithm for variational inequalities (VIs) with such operators. We also generalize our framework for relatively strongly convex objectives and strongly monotone variational inequalities.
\end{abstract}

\begin{keywords}
Convex optimization; composite optimization; proximal method; level-set method; variational inequality; universal method; mirror prox; acceleration; relative smoothness; saddle-point problem.
\end{keywords}

\section{Introduction}

In this paper we consider the following optimization problem
\begin{equation}
\label{eq:Problem}
\min_{x\in Q} f(x),
\end{equation}
where $Q$ is a convex subset of finite-dimensional vector space $E$, $f$ is generally a non-convex function.

Most minimization methods for such problems are constructed using some model of the objective $f$ at the current iterate $x_k$. This can be a quadratic model based on the $L$-smoothness of the objective
\begin{equation}\label{eq:quadr_model}
    f(x_k) + \la \nabla f(x_k), x-x_k \ra + \frac{L}{2}\|x-x_k\|_2^2.
\end{equation}
The step of gradient method is obtained by the minimization of this model \cite{nesterov2018lectures} (\attt{see the orange, dashed line in Fig. \ref{Fig:model_vis}}). More general models are constructed based on regularized second-order Taylor expansion \cite{nesterov2006cubic} or other Taylor-like models \cite{drusvyatskiy2019nonsmooth} as well as other objective surrogates \cite{mairal2013optimization}. Another example is the conditional gradient method \cite{frank1956algorithm}, where a linear model of the objective is minimized on every iteration. Adaptive choice of the parameter of the model with provably small computational overhead was proposed in \cite{nesterov2006cubic} and applied to first-order methods in \cite{nesterov2013gradient,nesterov2015universal, dvurechensky2018computational}. Recently, the first-order optimization methods were generalized to the so-called relative smoothness framework \cite{bauschke2016descent,lu2018relatively,ochs2017non}, where $\frac{1}{2} \|x-x_k\|_2^2$ in the quadratic model \eqref{eq:quadr_model} for the objective is replaced by general Bregman divergence.

The literature on the first-order methods \cite{devolder2014first,bogolubsky2016learning,dvurechensky2017universal} considers also gradient methods with inexact information, relaxing the model \eqref{eq:quadr_model} to
\begin{equation}\label{eq:inex_quadr_model}
	 f_{\delta}(x_k) + \la \nabla f_\delta(x_k), x-x_k \ra + \frac{L}{2}\|x-x_k\|^2_2 + \delta,
\end{equation}
with $(f_\delta,\nabla f_\delta)$ called inexact oracle and this model being an upper bound for the objective. In particular, this relaxation allows to obtain universal gradient methods \cite{nesterov2015universal}.

One of the goals of this paper is to describe and analyze the first-order optimization methods which use a very general \textit{inexact model} of the objective function, the idea being to replace the linear part in \eqref{eq:inex_quadr_model} by a general function $\psi_{\delta}(x,x_k)$ and the squared norm by general Bregman divergence (\attt{see the red, dotted line in Fig. \ref{Fig:model_vis}}). The resulting model includes as a particular case inexact oracle model and relative smoothness framework and allows one to obtain many optimization methods as a particular case, including conditional gradient method \cite{frank1956algorithm}, Bregman proximal gradient method \cite{chen1993convergence} and its application to optimal transport \cite{xie2018fast} and Wasserstein barycenter \cite{stonyakin2019gradient} problems, general Catalyst acceleration technique \cite{lin2015universal}, (accelerated) composite gradient methods \cite{nesterov2013gradient,beck2009fast},
(accelerated) level and bundle-type methods \cite{nemirovskii1985optimal,lan2015bundle,nesterov2021gradient}, gradient methods in the relative smoothness framework \cite{bauschke2016descent,lu2018relatively}. First attempts to propose inexact model generalization were made in \cite{stonyakin2019gradient,gasnikov2017universal} for non-accelerated methods
and in \cite{tyurin2017fast} for accelerated methods, yet without a relative smoothness paradigm. In this paper, we propose the inexact model in a very general setting, including
adaptivity of the algorithms to the parameter $L$, possible relative strong convexity, and relative smoothness. We also provide convergence rates for the gradient method and accelerated gradient method using an inexact model of the objective. As an application of our general framework, we develop a universal conditional gradient method, providing a parameter-free generalization of the results in \cite{nesterov2018complexity}.

\begin{figure}
  \centering
  \includegraphics[width=0.65\textwidth]{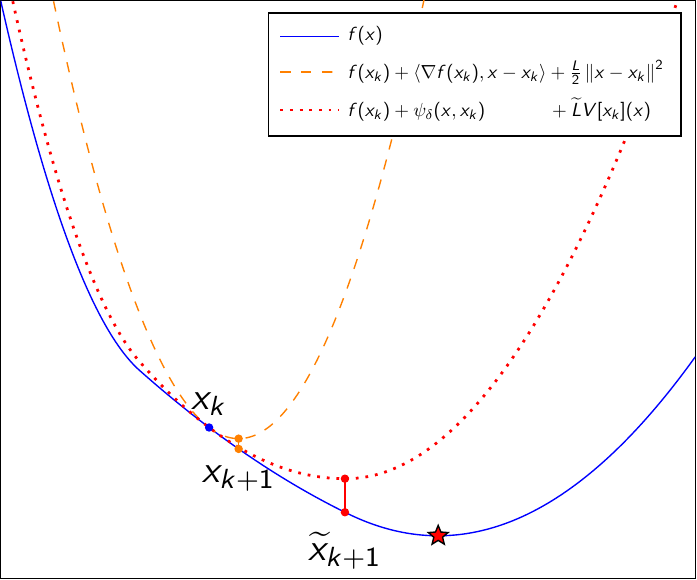}
  \caption{{\color{orange} Orange}, dashed line and $x_{k+1}$ illustrate the standard upper-bound and a step of the classical gradient method. {\color{red} Red}, dotted line and $\widetilde{x}_{k+1}$ illustrate the upper-bound based on our notion of inexact model and a step of the proposed method. \attt{We can see that with better upper-bound, the next iterate is closer to the minimum point.}\label{Fig:model_vis}}
\end{figure}

We believe that our model is flexible enough to be extended for problems with primal-dual structure\footnote{see recent results on this generalization in \cite{tyurin2019primal}.} \cite{nesterov2009primal-dual,nemirovski2010accuracy,nesterov2018complexity}, e.g. for problems with linear constraints \cite{anikin2017dual,chernov2016fast,guminov2019accelerated,nesterov2018primal-dual}; for random block-coordinate descent \cite{dvurechensky2017randomized}; for tensor methods \cite{nesterov2018implementable,gasnikov2019near}; for distributed optimization setting \cite{scaman2017optimal,uribe2018distributed,dvurechensky2018decentralize,dvinskikh2019primal}; and adaptive stochastic optimization \cite{iusem2019variance,ogaltsov2019adaptive}.

Optimization problem \eqref{eq:Problem} is tightly connected with variational inequality (VI)
\begin{equation}
    \label{eq:VI}
    \text{Find} \; x_* \in Q \; \text{s.t.} \; \la g(x_*),x_*-x\ra \leq 0, \; \forall x \in Q,
\end{equation}
where $g(x) = \nabla f(x)$. A special VI is also equivalent to finding a saddle-point of a convex-concave function
\begin{equation}
    \label{eq:SP}
    \min_{u\in Q_1}\max_{v\in Q_2} f(u,v)
\end{equation}
for $x=(u,v)$ and $g(x) = (\nabla_u f(u,v),-\nabla_v f(u,v))$. This motivates the second part of this paper, which consists of the generalization of the inexact model of the objective function to an inexact model for an operator in variational inequality. In particular, we extend the relative smoothness paradigm to variational inequalities with monotone and strongly monotone operators and provide a generalization of the Mirror-Prox method \cite{nemirovski2004prox}, its adaptive version \cite{gasnikov2019adaptive} (see also \cite{malitsky2020proximal,antonakopoulos2019adaptive,malitsky2020golden,antonakopoulos2020adaptive} for other types of adaptive extra-gradient methods), and universal version \cite{stonyakin2018generalized} to variational inequalities with such general inexact model of the operator. As a particular case, our approach allows us to partially \dd{reproduce} the results of \cite{chambolle2011first-order}.
We also apply the general framework for variational inequalities to saddle-point problems.

To sum up, we present a unified view on inexact models for optimization problems, variational inequalities, and saddle-point problems.

The structure of the paper is the following. In Section \ref{S:Inexact_min}, we
introduce an inexact model of the objective in optimization and provide several examples to illustrate the flexibility and generality of the proposed model. In particular, we demonstrate that relative smoothness and strong convexity are the special cases of our general framework.

In Section \ref{GM}, we consider an adaptive gradient method (GM) and an adaptive fast gradient method (FGM). FGM has a better convergence rate yet is not adapted to the relative smoothness paradigm. In Subsection \ref{univ_frank_wolfe}, we construct a universal conditional gradient (Frank--Wolfe) method using FGM with inexact projection. To the best of our knowledge, this is the first attempt to combine the Frank--Wolfe method \cite{har2015cond, jaggi2013revisiting} and the universal method \cite{nesterov2015universal}.
In Section \ref{VI}, we generalize the inexact model to variational inequalities and saddle-point problems for the case of monotone and strongly monotone operators.
In the former case, we construct an adaptive generalization of the Mirror-Prox algorithm for variational inequalities and saddle-point problems with such an inexact model. In the latter case, the proposed algorithm is accelerated by the restart technique to have the linear rate of convergence.
We especially consider the case of $m$-strong convexity of the model. The natural motivations for such a formulation are composite saddle-point problems and mixed variational inequalities with an $m$-strongly convex composite.

The contribution of this paper is follows:
\begin{enumerate}
    \item \att{We introduce an inexact $(\delta, L, \mu, m, V)$-model for optimization problems and obtain convergence rates for adaptive GM for optimization problems with this model.
    }
    \item \att{We introduce an inexact $(\delta, L, \mu, m, V,\|\cdot \|)$-model for optimization problems and obtain convergence rates for adaptive FGM for optimization problems with such a model.
    Using FGM with the inexact model we construct a universal conditional gradient (Frank--Wolfe) method.}
    \item \att{We propose generalizations of the above models, namely $(\delta, L, V)$-model and $(\delta, L, \mu, V)$-model for variational inequalities and saddle-point problems. \pd{As a special case, we introduce relative smoothness for operators in variational inequalities, thus, generalizing  \cite{lu2018relatively} from optimization to variational inequalities.} We obtain convergence rates for adaptive versions of the Mirror-Prox algorithm for problems with an inexact model.}
    \rev1{In Example \ref{L_condition_Example}, we show that our method is able to solve variational inequalities with singularities, for which the application of existing Mirror-Prox algorithms is not theoretically justified.\footnote{As an exception we mention works \cite{antonakopoulos2019adaptive,antonakopoulos2020adaptive} that appeared after the first version of this paper appeared as a preprint \cite{stonyakin2019inexact}.}}
\end{enumerate}

\rev1{
To demonstrate the practical performance of the proposed algorithms, 
several numerical experiments were performed for different examples. 
First, in Appendix \ref{app_numerical_1}, we consider a constrained convex optimization problem with non-smooth functional constraints motivated by three different geometrical problems: an analogue of the {\it smallest covering circle problem, the problem of the best approximation of the distance from the given set, and the Fermat--Torricelli--Steiner problem}. 
Further, in Appendix \ref{appendix_relative_obj}, we test our algorithms on a minimization problem of a polynomial-like function, which is shown in \cite{lu2018relatively} to be relatively smooth, under non-smooth functional constraints.
This is followed by numerical experiments in Appendix \ref{D_optimal_problem} for the D-optimal design problem with the relatively smooth objective and simplex constraints, which has wide applications in statistics and computational geometry. We report the advantages of the proposed method and the possibility to control the convergence of the method by adjusting the line-search parameter.
Finally, in Appendix \ref{app_RSP} we describe the results of the experiments for the resource sharing problem considered as an example in \cite{antonakopoulos2019adaptive}.
}

\section{Inexact Model in Minimization Problems. Definition\dd{s} and Examples}
\label{S:Inexact_min}
\rev1{In this section, we introduce a general notation which is used in the paper and our main definition of the inexact model for optimization problems. To motivate this definition and illustrate its generality, we give several examples of known situations which are covered by our inexact model.}

As said, we start with the general notation. Let $E$ be an $n$-dimensional real vector space and $E^*$ be its dual. We denote the value of a linear function $g \in E^*$ at $x\in E$ by $\la g, x \ra$. Let $\|\cdot\|$ be some norm on $E$, $\|\cdot\|_{*}$ be its dual, defined by $\|g\|_{*} = \max\limits_{x} \big\{ \la g, x \ra, \| x \| \leq 1 \big\}$. 
\rev1{We use $\nabla f(x)$ to denote the gradient of a differentiable function $f$ at a point $x \in {\rm dom} f$ and, with a slight abuse of notation, a subgradient of a convex function $f$ at a point $x \in {\rm dom} f$. The whole subdifferential of a convex function $f$ at a point $x \in {\rm dom} f$ is denoted by $\partial f(x)$.} 
\rev1{Let $d$ be a convex function on $Q$, which is continuously differentiable on the relative interior ${\rm ri}Q$ of $Q$. Let $V[y](x) = d(x) - d(y) - \la \nabla d(y), x - y \ra$, $x \in Q, y \in {\rm ri}Q$ be the corresponding Bregman divergence.}
Most typically it is assumed that $d$ is $1$-strongly convex on $Q$ w.r.t. $\|\cdot\|$-norm, which we refer to as (1-SC) assumption w.r.t. $\|\cdot\|$-norm. 
\rev1{More precisely, this assumption means that,} for all $x \in Q,y \in {\rm ri} Q$, $d(x) - d(y) - \la \nabla d(y), x - y \ra \geqslant \frac{1}{2}\|x-y\|^2$.
We underline that, in general, we do not make this assumption, and, in what follows, we explicitly write if this assumption is made.

\begin{definition}[\rev1{Inexact model in optimization}]\label{defRelStronglyConvex}
Let $\delta,L,\mu,m \geq 0$. We say that $\psi_{\delta}(x, y)$ is a $(\delta, L, \mu, m, V)$-model of the function $f$ at a given point $y$ iff, for all $x \in Q$,
\begin{equation}
\label{eq:Str_Conv_Model}
{\mu V[y](x)} \leq {f(x) - \left(f_{\delta}(y) + \psi_{\delta}(x, y)\right)} \leq {LV[y](x) + \delta},
\end{equation}
 $\psi_{\delta}(x, y)$ \rev1{is convex in $x$}, satisfies $\psi_{\delta}(x,x)=0$ for all $x \in Q$ and \begin{equation}\label{mstongmodel}
\psi(x) \geqslant \psi(z) + \la \attt{g}, x- z \ra + m V[z](x), \quad \forall x, z \in Q, \,\,\attt{\forall g \in \partial \psi(z)},
\end{equation}
where for fixed $y \in Q$ and any $x \in Q$ we denote $\psi(x) = \psi_{\delta}(x, y)$. \attt{In other words, $\psi(x)$ is relatively $m$-strongly convex relative to $d$.}
\end{definition}

Note that in Definition \ref{defRelStronglyConvex} we allow $L$ to depend on $\delta$. Definition \ref{defRelStronglyConvex} is a generalization of $(\delta, L)$-model from \cite{gasnikov2017universal,tyurin2017fast, stonyakin2019gradient}, where $\mu=0$ and $m = 0$. Below, we denote $(\delta, L,0,0,V)$-model as $(\delta, L)$-model. Thus, our framework allows obtaining (accelerated) gradient methods for composite optimization and their counterparts for inexact oracle models.

Let us illustrate the above definition by several examples.
\begin{example}{\bf Relative smoothness and relative strong convexity, \cite{bauschke2016descent, lu2018relatively}.}
Assume that \pd{$d(x)$ is differentiable}, and in \eqref{eq:Problem}, the objective $f$ is \pd{differentiable} and relatively smooth \cite{bauschke2016descent,lu2018relatively} relative to $d$, i.e.
\[
f(x)-f(y) -\la \nabla f(y), x-y \ra  \leq \left(d(x)-d(y) -\la \nabla d(y), x-y \ra\right) = LV[y](x), \; \forall x,y \in Q
\]
and relatively strongly convex \cite{lu2018relatively} relative to $d$, i.e.
\[
\mu V[y](x) = \mu\left(d(x)-d(y) -\la \nabla d(y), x-y \ra\right)  \leq f(x)-f(y) -\la \nabla f(y), x-y \ra, \; \forall x,y \in Q.
\]
Then, clearly, Definition \ref{defRelStronglyConvex} holds with $m = 0$, $\delta = 0$, $\psi_\delta(x,y) = \langle \nabla f(y), x - y \rangle$. Importantly, the function $d$ is not necessarily strongly convex.

Note that if $V[y](x) \leq C_n\norm{x-y}^2$ for some constant $C_n = O(\log n)$,
the condition of $(\mu C_n)$-strong convexity w.r.t. norm $\norm{\cdot}$, namely
$\mu C_n \norm{x-y}^2 + f_\delta(y) + \psi_\delta(x,y) \leq f(x)$ implies the left inequality in \eqref{eq:Str_Conv_Model}.

One of the main applications of general relative smoothness and strong convexity is the step of tensor methods which use the derivatives of the objective of the order higher than 2 \cite{nesterov2018implementable,gasnikov2019near}.  Thus, our framework allows obtaining gradient method for optimization with relative smoothness and strong convexity and extending them to the case of inexact oracle setting.
\end{example}

\begin{example}{\bf Composite optimization, \cite{beck2009fast, nesterov2013gradient}.}
Assume that in \eqref{eq:Problem}, $f(x) = g(x) + h(x)$ with $L$-smooth w.r.t. norm $\|\cdot\|$ term $g$ and simple convex term $h$, which is usually called composite. In this case we assume that $V[y](x)$ satisfies (1-SC) condition w.r.t $\|\cdot\|$, and define $f_{\delta}(x) = \rev1{g(x)} + h(x)$ and $\psi_\delta(x,y) = \langle \nabla g(y), x - y \rangle + h(x) - h(y)$. 
\rev1{We have 
\begin{gather*}
f(x) - \left(f_{\delta}(y) + \psi_{\delta}(x, y)\right) = g(x) + h(x) - (g(y) + h(y) + \langle \nabla g(y), x - y \rangle + h(x) - h(y) ) \\ = g(x) - (g(y) + \langle \nabla g(y), x - y \rangle ).
\end{gather*}
By convexity of $g$, we have $0 \leq g(x) - (g(y) + \langle \nabla g(y), x - y \rangle )$. At the same time, by the $L$-smoothness of $g$ and (1-SC) condition for $V[y](x)$, 
\[
g(x) - (g(y) + \langle \nabla g(y), x - y \rangle )\leq \frac{L}{2}\|x-y\|^2 \leq LV[y](x).
\]
From the combination of the above three relations,} it is clear that \eqref{eq:Str_Conv_Model} holds with $\delta=0$ and $\mu=0$ and we are in the situation of Definition \ref{defRelStronglyConvex} with $m=0$ since $\psi_\delta(x,y)$ is convex in $x$. If $h$ turns out to be \pd{differentiable} and relatively $m$-strongly convex \attt{relative to $d$} \cite{lu2018relatively}, i.e. $h(x)-h(y)-\la\nabla h(y),x-y\ra \geq mV[y](x)$, then \eqref{mstongmodel} holds, but in \eqref{eq:Str_Conv_Model} we have $\mu=0$.
On the other hand, if $g$ turns out to be relatively $\mu$-strongly convex \attt{relative to $d$} \cite{lu2018relatively}, i.e. $g(x)-g(y)-\la\nabla g(y),x-y\ra \geq \mu V[y](x)$, then \eqref{eq:Str_Conv_Model} holds with $\delta=0$, but in \eqref{mstongmodel} we have $m=0$. 

A particular example is the following minimization problem \cite{anikin2015modern} motivated by traffic demands matrix estimation from link loads:
$$
f(x) = \frac{1}{2}\|Ax - b\|^2_2 + m \sum \limits_{k=1}^n x_k\ln{x_k} \rightarrow \min \limits_{x\in S_n(1)},
$$
where $S_n(1) = \left\{x\in \mathbb{R}^n_+| \sum_{i=1}^{n}x_i = 1\right\}$ is the standard unit simplex in $\mathbb{R}^n$. In this case $g(x) = \frac{1}{2}\|Ax - b\|^2_2$ and $h(x) = m \sum \limits_{k=1}^n x_k\ln{x_k}$. \rev1{If we choose $\|\cdot\| = \|\cdot\|_1$ and $d(x) = \sum \limits_{k=1}^n x_k\ln{x_k}$, then $d(x)$ satisfies (1-SC) condition, $V[y](x) = \sum \limits_{k=1}^n x_k\ln{(x_k/y_k)}$, and $h(x)$ is relatively $m$-strongly convex relative to $d$. Since $g$ is a quadratic function, by using the fact that \moh{Lipschitz} gradient is equivalent to the bounded Hessian, we obtain that $g$ has a Lipschitz-continuous gradient w.r.t. $\|\cdot\|_1$} with the constant $L=\max_{\|h\|_1 \leq 1} \la h,A^TA h\ra=\max_{k=1,...,n}\|A_k\|_2^2$, where $A_k$ is the $k$-th column of $A$.Thus, we obtain that $\psi_\delta(x,y) = \langle \nabla g(y), x - y \rangle + h(x) - h(y)$ is a $(0,L,0,m,V)$-model. At the same time, the part $g$ is not necessarily strongly convex. 
\end{example}


\begin{example}{\bf Superposition of functions, \cite{nemirovskii1985optimal}}.
Assume that in \eqref{eq:Problem} \cite{nemirovskii1985optimal,lan2015bundle}
$f(x) := g(g_1(x), \dots, g_m(x))$,
where each function $g_k(x)$ is a smooth convex function with $L_k$-Lipschitz gradient w.r.t. $\|\cdot\|$-norm for all $k$. The function $g(x)$ is an $M$-Lipschitz convex function w.r.t 1-norm, non-decreasing \dd{in} each of its arguments. The chosen Bregman divergence $V[y](x)$ is assumed to satisfy (1-SC). 
\rev1{We define $f_{\delta}(y) := f(y)$ and 
$$
\psi_{\delta}(x,y) = g(g_1(y) + \langle\nabla g_1(y), x - y \rangle, \dots, g_m(y)+\langle\nabla g_m(y), x - y \rangle) - f(y).
$$
By convexity and $L_k$-smoothness of each $g_k(x)$, we have, for all $k=1,...,m$
\[
g_k(y) +  \langle\nabla g_k(y), x - y \rangle \leq g_k(x) \leq g_k(y) +  \langle\nabla g_k(y), x - y \rangle +\frac{L_k}{2}\|y-x\|^2.
\]
By monotonicity and Lipschitz-continuity of $g$ w.r.t. 1-norm, we have
\begin{gather*}
g(g_1(y) + \langle\nabla g_1(y), x - y \rangle, \dots, g_m(y)+\langle\nabla g_m(y), x - y \rangle) \leq g(g_1(x), \dots, g_m(x)) \\
\leq g\left(g_1(y) + \langle\nabla g_1(y), x - y \rangle+\frac{L_1}{2}\|y-x\|^2, \dots, g_m(y)+\langle\nabla g_m(y), x - y \rangle +\frac{L_2}{2}\|y-x\|^2\right)  \\
\leq g(g_1(y) + \langle\nabla g_1(y), x - y \rangle, \dots, g_m(y)+\langle\nabla g_m(y), x - y \rangle) + M\frac{\sum_{i=1}^{m}L_i}{2}\norm{x - y}^2.
\end{gather*}
}
From this, by the (1-SC) condition for $V[y](x)$, we obtain
\begin{gather*}
0 \leq f(x) - (f(y) + g(g_1(y) + \langle\nabla g_1(y), x - y \rangle, \dots, g_m(y)+\langle\nabla g_m(y), x - y \rangle) - f(y)) \\\leq M\frac{\sum_{i=1}^{m}L_i}{2}\norm{x - y}^2 \leq MV[y](x)\sum_{i=1}^{m}L_i,  \,\,\,\, \forall x,y \in Q,
\end{gather*}
\rev1{which is \eqref{eq:Str_Conv_Model} with $\mu=0$ and $\delta=0$ given in the above definition of $f_{\delta}(y)$ and $\psi_{\delta}(x,y)$. Clearly,  $\psi_{\delta}(x,y)$ is convex in $x$ as a composition of non-decreasing convex function $g$ with linear functions. 
Thus, \eqref{mstongmodel} holds with $m=0$, and we obtain a $\left(0,M\cdot\left(\sum_{i=1}^{m}L_i\right) \right)$-model of $f$.}
As a result, our framework allows to obtain (accelerated) level  gradient methods considered in \cite{nemirovskii1985optimal,lan2015bundle} as a special case. Moreover, we generalize these methods for the case of inexact oracle information.
\end{example}

\begin{example}\label{E:Proximal}{\bf Proximal method, \cite{chen1993convergence}.}
\label{prox_ex}
Let us consider the optimization problem \eqref{eq:Problem}, where $f$ is an arbitrary convex function (not necessarily smooth).
Then, for arbitrary $L \ge 0$, $\psi_{\delta}(x,y) = f(x) - f(y)$
is  $(0, L)$-model of $f$ with $f_{\delta}(y) = f(y)$ at a given point $y$. Thus, our framework allows obtaining (Bregman) proximal gradient methods  \cite{chen1993convergence,parikh2014prox} as a special case and extends them to the case of inexact oracle setting.
In particular, based on this model (with Bregman divergence to be Kullback--Leibler divergence), we propose in \cite{stonyakin2019gradient} proximal Sinkhorn's algorithm for Wasserstein distance calculation problem and in \cite{kroshnin2019complexity} proximal IBP for Wasserstein barycenter problem.

\end{example}

\begin{example} {\bf Min-min problem.}
Assume that in \eqref{eq:Problem} $f(x) := \min_{z \in Z}F(z,x)$, where
the set $Z$ is convex and bounded,
the function $F$ is smooth and convex w.r.t. all variables. Moreover, assume that
$$\norm{\nabla F(z',x') -\nabla F(z,x)}_2 \leq L \norm{(z',x') -(z,x)}_2,\,\,\forall z,z'\in Z,\,x,x'\in \R^n.$$
Let $V[y](x)=\frac{1}{2}\|x-y\|_2^2$. \rev1{Assume that for any $y \in Q$ it is possible to find an approximate solution of $\min_{z \in Z}F(z,x)$, i.e. a point $\widetilde{z}_\delta(y) \in Z$ such that}
\begin{gather*}
\langle\nabla_z F(\widetilde{z}_\delta(y), y), z - \widetilde{z}_\delta(y)\rangle \geq -\delta, \,\,\,\forall z \in Z.
\end{gather*}
Then
$F(\widetilde{z}_\delta(y), y) - f(y) \leq \delta$
and $\psi_{\delta}(x,y) = \langle\nabla_z F(\widetilde{z}_\delta(y), y), x - y\rangle$
is  $(6\delta, 2L,0,0,V)$-model of $f$ with $f_\delta(y) = F(\widetilde{z}_\delta(y), y) - 2\delta$ at a given point $y$.
\end{example}

\attt{
\begin{example}{\bf Inexact oracle, \cite{devolder2014first}.}
We say that $f$ is equipped with $(\delta, L)$-oracle, if for any $y \in Q$ we can find a pair $(f_{\delta}(y), \nabla f_{\delta}(y))$ such that
\begin{equation*}
0 \leq {f(x) - \left(f_{\delta}(y) + \la\nabla f_{\delta}(y), x - y \ra\right)} \leq {\frac{L}{2}\norm{x - y}^2 + \delta},
\end{equation*}
for all $x \in Q$ (see \cite{devolder2014first}). Clearly, under the assumption (1-SC),  the function $\la\nabla f_{\delta}(y), x - y \ra$ is a $(\delta, L)$-model.
Examples of inexact models in \ref{saddle_point_problem}, \ref{augmented_lagrangians}, \ref{Moreau_ex} are also described by inexact $(\delta, L)$--oracle. The detailed proofs can be found in \cite{devolder2014first}.
\end{example}}

 \begin{example}{\bf Saddle-point problem, \cite{devolder2014first}.}
 \label{saddle_point_problem}
Assume that in \eqref{eq:Problem}  $Q =  \R^n $, 
 $f(x) = \max_{z \in Z}\left[\langle x, b - Az\rangle - \phi(z)\right]$,
 where $\phi(z)$ is a $\mu$-strongly convex function w.r.t. $p$-norm ($1\leq p\leq2$). Then $f$ is smooth and convex and its gradient is Lipschitz continuous with constant $L = \frac{1}{\mu}\max_{\norm{z}_p\leq1}\norm{Az}_2^2$.
 If $z_\delta(y)\in Z$ is an approximate solution to the auxiliary max-problem, i.e.
 $$\max_{z \in Z}\left[\langle y, b - Az\rangle - \phi(z)\right] - \left[\langle y, b - Az_{\delta}(y)\rangle - \phi(z_{\delta}(y))\right]\le \delta,$$
 then
$
 \psi_{\delta}(x,y) =\langle b - Az_\delta(y), x - y\rangle
 $
 is  $(\delta, 2L,0,0,V)$-model of $f$ with $f_\delta(y) = \langle y, b - Az_\delta(y)\rangle - \phi(z_\delta(y))$ at the point $y$ if we define $V[y](x)=\frac{1}{2}\|x-y\|_2^2$.
 \end{example}

 \begin{example}
 {\bf Augmented Lagrangians, \cite{devolder2014first}.}
 \label{augmented_lagrangians}
 Let $\phi(z)$ be a smooth function. Consider the problem
 \begin{gather*}
 \min_{Az=b,\,z\in Q} \phi(z) + \frac{\mu}{2}\norm{Az -b}_2^2
 \end{gather*}
 and the corresponding dual problem
 \begin{align*}
 \min_{x \in \R^n} \left\{f(x) = \max_{z \in Q}\underbrace{\left(\langle x, b - Az\rangle - \phi(z) - \frac{\mu}{2}\norm{Az -b}_2^2\right)}_{\Lambda(x,z)}\right\} .
 \end{align*}
 If $z_\delta(y)$ is an approximate solution of auxiliary max-problem, i.e.
 \att{
  \begin{gather*}
 \max_{z \in Q}\left\langle\nabla_z\Lambda(y,z_\delta(y)),z - z_\delta(y)\right\rangle \leq \delta,
 \end{gather*}}
 then $ \psi_{\delta}(x,y) =\langle b - Az_\delta(y), x - y\rangle$
 is  $(\delta, \mu^{-1},0,0,V)$-model of $f$ with $$f_\delta(y) = \langle y, b - Az_\delta(y)\rangle - \phi(z_\delta(y))- \frac{\mu}{2}\norm{Az_\delta(y) -b}_2^2$$ at the point $y$ if we take $V[y](x)=\frac{1}{2}\|x-y\|_2^2$.
\end{example}

\begin{example}
{\bf Moreau envelope of the objective function, \cite{devolder2014first}.}
 \label{Moreau_ex}
 Let us consider the following optimization problem:
 \begin{align}
 \min_{x \in \R^n} \left\{f_L(x) := \min_{z \in Q}\underbrace{\left\{f(z) + \frac{L}{2}\norm{z - x}^2_2\right\}}_{\Lambda(x,z)}\right\}.
 \label{prox}
 \end{align}
 Assume that $f$ is convex  and, for some $z_L(y)$,
 \begin{gather*}
 \max_{z \in Q}\left\{\Lambda(y,z_L(y)) - \Lambda(y,z) + \frac{L}{2}\norm{y - z_L(y)}^2_2\right\} \leq \delta.
 \end{gather*}
 Then $ \psi_{\delta}(x,y) =\langle L(y - z_L(y)), x - y\rangle$  is $(\delta, L,0,0,V)$-model of $f$ with $$f_\delta(y) = f(z_L(y)) + \frac{L}{2}\norm{z_L(y) - y}^2_2 - \delta$$ at the point $y$ if we take $V[y](x)=\frac{1}{2}\|x-y\|_2^2$.
 \end{example}

\begin{example}
{\bf Clustering by Electorial Model, \cite{stonyakin2019gradient}.}
 \label{Clustering_electorial}
Another example of an optimization problem that allows for $(\delta, L, 0, m, V)$-model with strong convexity of the function $\psi_\delta(x, y)$  is proposed in \cite{stonyakin2019gradient} to address a \emph{non-convex} optimization problem which arises in an electoral model for clustering introduced in \cite{RePEc:cor:louvco:2018001}.
In this model, voters (data points) select a party (cluster) iteratively by minimizing the following function
\begin{equation*}\label{Nesterov_Electoral_Model}
\min_{z \in S_n(1), p \in \mathbb{R}^m_{+}} \left\{f_{\mu_1, \mu_2}(x = (z, p)) = g(x) + \mu_1 \sum\limits_{k=1}^{n}z_k \ln z_k + \frac{\mu_2}{2} \|p\|^2_2 \right\}
\end{equation*}
where $S_n(1)$ is the standard unit simplex in $\mathbb{R}^n$.
Let us choose $\norm{x}^2 = \norm{z}^2_1 + \norm{p}^2_2$ and assume that, in general non-convex, $g(x)$ has $L_g$--Lipschitz continuous gradient
\begin{equation*}
    \norm{\nabla g(x) - \nabla g(y)}_* \leq L_g \norm{x - y} \quad \forall x, y \in S_n(1) \times \mathbb{R}^m_{+}
\end{equation*}
and $L_g \leq \mu_1$, and $L_g \leq \mu_2$.
It can be shown (see \cite{stonyakin2019gradient}) that
\begin{equation*}
\begin{gathered}
\psi_{\delta}(x, y) = \langle \nabla g(y), x-y \rangle  - L_g \cdot {\rm KL}(z_x|z_y) - \frac{L_g }{2}\|p_x-p_y\|^2_2 \\ + \mu_1 ({\rm KL}(z_x | \textbf{1}) - {\rm KL}(z_y | \textbf{1})) + \frac{\mu_2 }{2} \left(\|p_x\|^2_2-\|p_y\|^2_2\right)
\end{gathered}
\end{equation*}
is a $(0, 2L_g,0,\min\{\mu_1, \mu_2\} - L_g,V)$-model of $f_{\mu_1, \mu_2}(x)$. Here ${\rm KL}(z_x | z_y) = \sum_{i=1}^m [z_x]_i \ln ([z_x]_i/[z_y]_i)$ and
\begin{eqnarray*}
V[y](x) = {\rm KL}(z_x | z_y) + \frac{1}{2}\|p_x - p_y\|^2_2.
\end{eqnarray*}

We finish this section by defining an approximate solution to an optimization problem. This definition will be used to allow inexact solutions of auxiliary minimization problems in each iteration of our algorithms.
\end{example}

\begin{definition}
\label{solNemirovskiy}
For a convex optimization problem
$\min_{x \in Q} \Psi(x)$
\attt{and $\widetilde{\delta} \geq 0$}, we denote by $\text{Arg}\min_{x \in Q}^{\widetilde{\delta}}\Psi(x)$~a set of points  $\widetilde{x}$ such  that
\begin{gather}\label{eqv_inex_sol}
\gav{\exists} h \in \partial\Psi(\widetilde{x})\gav{:\forall x \in Q} \,\, \gav{\to}\, \langle h, x - \widetilde{x} \rangle \geq -\widetilde{\delta}.
\end{gather}
We denote by $\argmin_{x \in Q}^{\widetilde{\delta}}\Psi(x)$  some element of $\text{Arg}\min_{x \in Q}^{\widetilde{\delta}}\Psi(x)$.
\end{definition}

\section{Gradient Method with Inexact Model.}\label{GM}
In this section, we consider adaptive gradient-type methods for problems with the $(\delta, L, \mu, m, V)$-model of the objective. First, we consider the non-accelerated gradient method and then its accelerated version. \rev1{For each algorithm, we obtain convergence rates that have two terms. The first term decreases as iterations go and corresponds to the case of the exact model with $\delta=0$ and the exact solution of the auxiliary problems with $\widetilde{\delta}=0$. The second term is non-decreasing as iterations go and corresponds to errors in the model and the solutions of the auxiliary problems.}
\rev1{Non-accelerated Algorithm \ref{Alg2} 
has a slower decaying part, but the constant term depending on errors $\delta, \widetilde{\delta}$. Accelerated Algorithm \ref{FastAlg2_strong} has a faster decaying term, but increasing term depending on errors $\delta, \widetilde{\delta}$. Moreover, to obtain acceleration, we have to consider Algorithm \ref{FastAlg2_strong} for a narrower class of problems with $(\delta, L, \mu, m, V,\|\cdot \|)$-models (for a precise definition, see Definition \ref{defRelStronglyConvexFast}).} 


\subsection{Adaptive Gradient Method with $(\delta, L, \mu, m, V)$-Model}\label{SC}

In this section, we consider the adaptive gradient method for problem \eqref{eq:Problem}, which uses a $(\delta, L, \mu, m, V)$-model of the objective. For the case when $\mu + m > 0$, our method has a linear convergence, \pd{and for a more general case $\mu=0$ and $m = 0$, we prove a sublinear convergence rate}.

We assume that in each iteration $k$, the method has access to $(\delta, \bar{L}_{k+1}, \mu, m, V)$-model of $f$ (see Definition~\ref{defRelStronglyConvex}). 
\pd{In general, constant $\bar{L}_{k+1}$ may vary from iteration to iteration and we only assume that the $(\delta, \bar{L}_{k+1}, \mu, m, V)$-model exists. We do not use $\bar{L}_{k+1}$ in Algorithm \ref{Alg2} explicitly and, moreover, our method is adaptive to this constant. \attt{Constant $\widetilde{\delta}$ is an error in terms of Definition~\ref{solNemirovskiy} and there is no need to know it explicitly in Algorithm \ref{Alg2}, $\widetilde{\delta}$ only appears in Theorem~\ref{Th:str_conv_adap}.}} 

\begin{algorithm}
\caption{Adaptive gradient method with $(\delta, L, \mu, m, V)$-model}
\label{Alg2}
\begin{algorithmic}[1]
\STATE \textbf{Input:} $x_0$ is the starting point, 
$\delta>0$ is the oracle error and $L_0 >0$.
\STATE Set
 $S_0 := 0 $
\FOR{$k \geq 0$}
\STATE Find the smallest \ga{integer} $i_k\geq 0$ such that
\begin{equation}\label{exitLDL_G_S}
f_{\delta}(x_{k+1}) \leq f_{\delta}(x_{k}) + \psi_{\delta}(x_{k+1}, x_{k}) +L_{k+1}V[x_{k}](x_{k+1}) + \delta,
\end{equation}
where \revv{$L_{k+1} = 2^{i_k-1}L_k$}, $\alpha_{k+1}:= \frac{1}{L_{k+1}}$, $S_{k+1} := S_k + \alpha_{k+1}$, and, for some $\widetilde{\delta}\geq 0$
\begin{equation}\label{equmir2DL_G_S}
\phi_{k+1}(x) := \psi_{\delta}(x, x_k)+L_{k+1}V[x_k](x), \quad
x_{k+1} := {\arg\min_{x \in Q}}^{\widetilde{\delta}} \phi_{k+1}(x).
\end{equation}
\rev1{Here the inexact minimization is in accordance with Definition~\ref{solNemirovskiy}.}
\ENDFOR
\end{algorithmic}
\end{algorithm}

For \att{$L_k > \mu$} and all $k \geq 0$, we denote
$$
q_k \eqdef \frac{L_{k}-\mu}{L_{k} + m} \rev1{= 1- \frac{m+\mu}{L_{k} + m} \leq 1 }, 
$$
and $q_k = 0$ for $L_k \leq \mu$. We assume that $Q_j^k \eqdef \prod_{i=j}^{k}q_{i}$ and $Q_j^k = 1$ for $j > k$.

\begin{theorem}\label{Th:str_conv_adap}
Assume that $\psi_{\delta}(x, y)$ is a $(\delta, L, \mu, m, V)$-model according to Definition \ref{defRelStronglyConvex}.
Denote by \revvat{$y_{N} = x_{j(N)}$, where $j(N) = \argmin_{k = 1,\dots, N}f_\delta(x_k)$}. Then, after $N$ iterations of Algorithm \ref{Alg2}, we have
\begin{align}
f(y_{N}) - f(x_*)
&\leq \min\left\{(L_{N} + m) Q_1^{N}, \frac{1}{\sum_{i=1}^{N}\frac{1}{L_{i} + m}}\right\}V[x_{0}](x_*) + \widetilde{\delta} + 3\delta, \label{analysis_algorithm_grad:ineq_2}\\
    V[x_{N}](x_*) &\leq Q_1^{N} V[x_0](x_*) + (\widetilde{\delta} + 2\delta)\sum_{i=1}^{N}\frac{Q_{i+1}^{N}}{L_{i} + m}. \label{analysis_algorithm_grad:ineq_1}
\end{align}
\end{theorem}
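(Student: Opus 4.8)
The plan is to compress everything into a single per-iteration inequality relating the residual $f_\delta(x_{k+1})-f(x_*)$ to the Bregman distances $R_k \eqdef V[x_k](x_*)$, and then to read off the two advertised rates from it via two different weighted best-iterate arguments.

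\emph{Key per-iteration inequality.} First I would show that the auxiliary objective $\phi_{k+1}(x)=\psi_\delta(x,x_k)+L_{k+1}V[x_k](x)$ is $(L_{k+1}+m)$-strongly convex relative to $d$: the term $\psi_\delta(\cdot,x_k)$ contributes $m$ by \eqref{mstongmodel}, while the Bregman term contributes $L_{k+1}$ through the three-point identity $V[x_k](x)=V[x_k](x_{k+1})+\langle\nabla d(x_{k+1})-\nabla d(x_k),x-x_{k+1}\rangle+V[x_{k+1}](x)$. Combining this with the inexact optimality of $x_{k+1}$ from Definition~\ref{solNemirovskiy} (there is $h\in\partial\phi_{k+1}(x_{k+1})$ with $\langle h,x-x_{k+1}\rangle\ge-\widetilde\delta$ for all $x\in Q$) gives, at $x=x_*$,
\[
\phi_{k+1}(x_*)\ge \phi_{k+1}(x_{k+1})-\widetilde\delta+(L_{k+1}+m)R_{k+1}.
\]
Next I would lower-bound $\phi_{k+1}(x_{k+1})$ using the line-search exit condition \eqref{exitLDL_G_S} together with $\phi_{k+1}(x_k)=0$, namely $\phi_{k+1}(x_{k+1})\ge f_\delta(x_{k+1})-f_\delta(x_k)-\delta$, and upper-bound $\phi_{k+1}(x_*)=\psi_\delta(x_*,x_k)+L_{k+1}R_k$ by the left (relative strong convexity) inequality in \eqref{eq:Str_Conv_Model}, which yields $\psi_\delta(x_*,x_k)\le f(x_*)-f_\delta(x_k)-\mu R_k$. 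After cancelling $f_\delta(x_k)$ this produces the master inequality
\begin{equation}
f_\delta(x_{k+1})-f(x_*)+(L_{k+1}+m)R_{k+1}\le (L_{k+1}-\mu)R_k+\delta+\widetilde\delta.
\tag{$\star$}
\end{equation}
Evaluating \eqref{eq:Str_Conv_Model} at $x=y=x_{k+1}$ also gives the elementary sandwich $0\le f(x_{k+1})-f_\delta(x_{k+1})\le\delta$, which I will use to pass between $f$ and $f_\delta$ and between $x_*$ and $x_{k+1}$.

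\emph{Distance recursion and sublinear rate.} Dropping $f_\delta(x_{k+1})-f(x_*)\ge-\delta$ (from the sandwich and $f(x_{k+1})\ge f(x_*)$) in $(\star)$ and dividing by $L_{k+1}+m$ gives $R_{k+1}\le q_{k+1}R_k+(2\delta+\widetilde\delta)/(L_{k+1}+m)$; unrolling this linear recursion and recognizing $\prod q_i=Q_1^N$ yields \eqref{analysis_algorithm_grad:ineq_1} directly. For the two function-value bounds I would sum $(\star)$ against positive weights and use that $y_N$ is the best iterate, so $f_\delta(y_N)=\min_k f_\delta(x_k)$ lies below any weighted average. For the sublinear term, weights $\beta_{k+1}=1/(L_{k+1}+m)$ together with $q_{k+1}R_k\le R_k$ telescope $R_k-R_{k+1}$; dividing by $\sum_i 1/(L_i+m)$ and applying the sandwich $f(y_N)\le f_\delta(y_N)+\delta$ gives the $1/\sum_i 1/(L_i+m)$ coefficient with additive error $2\delta+\widetilde\delta$.

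\emph{Linear rate and the main obstacle.} The delicate point is obtaining the \emph{linear} coefficient $(L_N+m)Q_1^N$ with a \emph{clean} additive error. The naive route—insert the unrolled distance bound for $R_{N-1}$ into $(\star)$ at the last step and multiply by $(L_N-\mu)$—inflates the error to order $\frac{L_N-\mu}{\mu+m}(\delta+\widetilde\delta)$, which is not $O(\delta+\widetilde\delta)$ once the relative condition number is large. The fix, and the heart of the proof, is to sum $(\star)$ against the product weights $w_{k+1}=1/(Q_1^{k+1}(L_{k+1}+m))$, chosen so that the coefficients of $R_k$ telescope exactly: with $c_k\eqdef w_k(L_k+m)$ one has $c_{k+1}q_{k+1}=c_k$, i.e. $c_k=c_0/Q_1^k$. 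The telescoping leaves only $c_0R_0$ plus the clean error $(\delta+\widetilde\delta)\sum_k w_{k+1}$, and after normalization the coefficient becomes $1/\sum_{j=1}^N 1/(Q_1^j(L_j+m))$, which is bounded above by its last term $(L_N+m)Q_1^N$; so the geometrically growing weights still produce the $Q_1^N$ contraction while keeping the error linear in $\delta+\widetilde\delta$. Taking the smaller of the two coefficients and collecting the additive errors (at most $\widetilde\delta+3\delta$ after converting $f_\delta(y_N)$ to $f(y_N)$) gives \eqref{analysis_algorithm_grad:ineq_2}. A minor preliminary is to note that the line search \eqref{exitLDL_G_S} terminates, since the $(\delta,\bar L_{k+1},\mu,m,V)$-model guarantees the exit condition once $L_{k+1}\ge\bar L_{k+1}$, so all the $L_k$ and the quantities above are well defined.
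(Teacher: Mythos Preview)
Your proposal is correct and essentially follows the paper's route. Your master inequality $(\star)$ is exactly the paper's inequality \eqref{analysis_algorithm_grad:prove_1} (written with $f_\delta$ instead of $f$, which only shifts one $\delta$), and your ``product weights'' $w_{j}=1/(Q_1^{j}(L_j+m))$ are, up to the global factor $Q_1^N$, precisely the weights $Q_{j+1}^N/(L_j+m)$ that appear when the paper unrolls the recursion $R_j\le q_j R_{j-1}+\frac{1}{L_j+m}(f(x_*)-f(x_j)+\widetilde\delta+2\delta)$; so the two arguments are the same computation viewed from opposite ends.

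Two small differences are worth noting. First, the paper extracts \emph{both} terms in the $\min$ from the single weighted sum: after obtaining $f(y_N)-f(x_*)\le \frac{Q_1^N}{\sum_i Q_{i+1}^N/(L_i+m)}R_0+\widetilde\delta+3\delta$, it bounds the ratio once by keeping only the last summand (giving $(L_N+m)Q_1^N$) and once by using $Q_{i+1}^N\ge Q_1^N$ (giving $1/\sum_i 1/(L_i+m)$). Your two separate weighted sums (uniform vs.\ product weights) reach the same endpoints but the one-sum route avoids the ``naive route fails'' detour entirely. Second, your product weights are undefined whenever some $q_j=0$ (i.e.\ $L_j\le\mu$); the paper's unrolling still goes through there because $\frac{L_j-\mu}{L_j+m}R_{j-1}\le q_jR_{j-1}$ holds in either case, and it also disposes of $L_N\le\mu$ by reading the bound directly from $(\star)$ at the last step. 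This is a trivial patch for your argument as well.
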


To prove Theorem \ref{Th:str_conv_adap} we need the following lemma.

\begin{lemma}
    \label{lemma:str_conv}
	Let $\psi(x)$ be a \attt{relatively} $m$-strongly convex function \attt{relative to $d$} with $m \geq 0$, \rev1{i.e. \eqref{mstongmodel} holds,} and
	\begin{equation}
	\label{eq:lm:str_conv_y}
	y = {\argmin_{x \in Q}}^{\widetilde{\delta}} \{\psi(x) + \beta V[z](x)\},
	\end{equation}
	where $\beta \geq 0$.
Then
	\begin{equation*}
	\psi(x) + \beta V[z](x) \geq \psi(y) + \beta V[z](y) + (\beta + m)V[y](x) - \widetilde{\delta} ,\,\,\, \forall x \in Q.
	\end{equation*}
\end{lemma}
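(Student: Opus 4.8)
The plan is to combine the inexact first-order optimality condition for $y$ with the relative strong convexity of $\psi$ and the three-point identity for the Bregman divergence. First I would unpack what $y = {\argmin_{x\in Q}}^{\widetilde\delta}\{\psi(x) + \beta V[z](x)\}$ means via Definition~\ref{solNemirovskiy}. Since $d$ is differentiable on ${\rm ri}\,Q$, the map $x \mapsto V[z](x)$ is differentiable with $\nabla_x V[z](y) = \nabla d(y) - \nabla d(z)$, so the subdifferential of the objective at $y$ splits as $\partial\psi(y) + \beta(\nabla d(y)-\nabla d(z))$. Hence there is a subgradient $g \in \partial\psi(y)$ with
\[
\la g + \beta(\nabla d(y)-\nabla d(z)),\, x - y\ra \geq -\widetilde\delta, \quad \forall x \in Q.
\]

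Next I would apply \eqref{mstongmodel} at the base point $y$ with precisely this subgradient $g$, which gives $\la g, x-y\ra \leq \psi(x) - \psi(y) - mV[y](x)$ for all $x \in Q$. Substituting the lower bound on $\la g, x-y\ra$ read off from the optimality condition yields
\[
\psi(x) \geq \psi(y) + mV[y](x) - \beta\la \nabla d(y)-\nabla d(z),\, x-y\ra - \widetilde\delta.
\]
The final ingredient is the three-point identity $\la \nabla d(y)-\nabla d(z),\, x-y\ra = V[z](x) - V[z](y) - V[y](x)$, which follows directly from the definition of $V$ by expanding all three divergences. Plugging it in and moving $\beta V[z](x)$ to the left-hand side collapses everything to the claimed inequality, with the coefficient of $V[y](x)$ becoming $m + \beta$ as required.

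The step I expect to require the most care is the very first one: ensuring that the \emph{same} subgradient $g \in \partial\psi(y)$ appears both in the optimality condition and in the relative-strong-convexity bound. This is what forces the inexact-minimization error $\widetilde\delta$ to enter additively and cleanly rather than being amplified, and it relies on the differentiability of $d$ so that the $\beta V[z]$ term contributes an exact gradient while all the nonsmoothness is isolated in $\psi$. Everything after that is routine three-point-identity bookkeeping, and the hypothesis $\beta \geq 0$ is needed only implicitly to keep $V[z](\cdot)$ convex so that the decomposition of the subdifferential is valid.
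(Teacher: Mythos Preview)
Your proposal is correct and follows essentially the same argument as the paper: extract a subgradient $g\in\partial\psi(y)$ from the inexact optimality condition, combine it with \eqref{mstongmodel} at the same $g$, and finish with the three-point identity for $V$. Your remark that the same subgradient must be used in both steps is exactly the point, and the paper handles it the same way.
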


\begin{proof}
	\pd{By optimality conditions in \eqref{eq:lm:str_conv_y} and} Definition \ref{solNemirovskiy}, 
	\begin{gather*}
		\exists g \in \partial\psi(y), \,\,\, \langle g + \att{\beta}\nabla_y V[z](y), x - y \rangle \geq -\widetilde{\delta} ,\,\,\, \forall x \in Q.
	\end{gather*}
	\rev1{From this we obtain}
	\begin{gather*}
		\psi(x) - \psi(y) \stackrel{\rev1{\eqref{mstongmodel}}}{\geq} \langle g, x - y \rangle + m V[y](x) \geq \langle \att{\beta}\nabla_y V[z](y), y - x \rangle - \widetilde{\delta} + m V[y](x),
	\end{gather*}
 which together with equality
	\begin{gather*}
	\langle \nabla_y V[z](y), y - x \rangle=\langle \nabla d(y) - \nabla d(z), y - x \rangle=d(y) - d(z) - \langle \nabla d(z), y - z \rangle +\\ + d(x) - d(y) - \langle \nabla d(y), x - y \rangle - d(x) + d(z) + \langle \nabla d(z), x - z \rangle=\\=
	V[z](y) + V[y](x) - V[z](x)
	\end{gather*}
complete the proof. \rev1{The latter chain of equalities follows from the definition of Bregman divergence, see the beginning of Section \ref{S:Inexact_min}.}
\end{proof}


\begin{proof}[Proof of Theorem \ref{Th:str_conv_adap}]
Since by Definition \ref{defRelStronglyConvex} with $x = y$, \rev1{we have} 
\revvat{
\begin{align}
\label{analysis_algorithm_grad:f_delta}
f(x) - \delta \leq  f_{\delta}(x)  \leq f(x),
\end{align} 
}
and \rev1{using} \eqref{exitLDL_G_S}, we have the following series of inequalities
\begin{align*}
f(x_{N})
&\leq f_{\delta}(x_{N}) + \delta \stackrel{\rev1{\eqref{exitLDL_G_S}}}{\leq} f_{\delta}(x_{N-1}) + \psi_{\delta}(x_{N}, x_{N-1}) +L_{N}V[x_{N-1}](x_{N}) + 2\delta.
\end{align*}
Using Lemma \ref{lemma:str_conv} for \eqref{equmir2DL_G_S} with \rev1{$y=x_N$, $\beta = L_N$, $z=x_{N-1}$, $\psi(x) = \psi_{\delta}(x,x_{N-1})$,} we have
\begin{align*}
f(x_{N})
&\leq f_{\delta}(x_{N-1}) + \psi_{\delta}(x, x_{N-1}) +L_{N}V[x_{N-1}](x) - \left(L_{N} + m\right)V[x_{N}](x) + \widetilde{\delta} + 2\delta.
\end{align*}
\rev1{Further, applying} the left inequality \eqref{eq:Str_Conv_Model}, we have
\begin{align}
\label{analysis_algorithm_grad:prove_1}
f(x_{N}) \leq f(x) + (L_{N} - \mu)V[x_{N-1}](x) - \left(L_{N} + m\right)V[x_{N}](x) + \widetilde{\delta} + 2\delta.
\end{align}
\revv{Let us first consider the case $L_N>\mu$.}
Taking $x = x_*$ in the above inequality and using inequality $f(x_*) \leq f(x_{N})$, we obtain
\begin{align}
\label{analysis_algorithm_grad:prove_2}
\left(L_{N} + m\right)V[x_{N}](x_*) &\leq (L_{N} - \mu)V[x_{N-1}](x_*) + \widetilde{\delta} + 2\delta. 
\end{align}
Thus, recursively we have that
\begin{align*}
V[x_{N}](x_*) &\leq q_{N} V[x_{N-1}](x_*) + \frac{\widetilde{\delta} + 2\delta}{L_{N} + m}\leq Q_1^{N} V[x_0](x_*) + (\widetilde{\delta} + 2\delta)\sum_{i=1}^{N}\frac{Q_{i+1}^{N}}{L_{i} + m}.
\end{align*}
The last inequality proves \eqref{analysis_algorithm_grad:ineq_1}. Now we rewrite \eqref{analysis_algorithm_grad:prove_1} for $x = x_*$ as
\begin{align*}
V[x_{N}](x_*) \leq \frac{1}{L_{N} + m}(f(x_*) - f(x_{N}) + \widetilde{\delta} + 2\delta) +  q_{N}V[x_{N-1}](x_*).
\end{align*}
Recursively, we have
\begin{align*}
V[x_{N}](x_*) \leq \sum_{i=1}^{N}\left(\frac{Q_{i+1}^{N}}{L_{i} + m}(f(x_*) - f(x_{i})+ \widetilde{\delta} + 2\delta)\right) + Q_{1}^{N} V[x_{0}](x_*).
\end{align*}
\revvat{
From the definition of $y_{N}$ and \eqref{analysis_algorithm_grad:f_delta}, we obtain
\begin{align}
\label{analysis_algorithm_grad:f_y_n}
f(y_N) - \delta \leq f_\delta(y_N) \leq f_\delta(x_k) \leq f(x_k), \quad \forall k \in [1, N].
\end{align}
}
Using that $V[x_{N}](x_*) \geq 0$ and \revvat{\eqref{analysis_algorithm_grad:f_y_n}}, we get
\begin{align*}
Q_{1}^{N} V[x_{0}](x_*)
&\geq \sum_{i=1}^{N}\left(\frac{Q_{i+1}^{N}}{L_{i} + m}(f(x_i) - f(x_*) - \widetilde{\delta} - 2\delta)\right)\\
&\geq (f(y_{N}) - f(x_*))\sum_{i=1}^{N}\frac{Q_{i+1}^{N}}{L_{i} + m} - (\widetilde{\delta} + 3\delta)\sum_{i=1}^{N}\frac{Q_{i+1}^{N}}{L_{i} + m}.
\end{align*}
\pd{Dividing} by $\sum_{i=1}^{N}\frac{Q_{i+1}^{N}}{L_{i} + m}$, \pd{we obtain}
\begin{align*}
f(y_{N}) - f(x_*)
&\leq \frac{Q_{1}^{N}}{\sum_{i=1}^{N}\frac{Q_{i+1}^{N}}{L_{i} + m}} V[x_{0}](x_*) + \widetilde{\delta} + 3\delta.
\end{align*}
\pd{Since} $\sum_{i=1}^{N}\frac{Q_{i+1}^{N}}{L_{i} + m} \geq \frac{1}{L_{N} + m}$ and $Q_{i}^{N} \geq Q_{1}^{N}$ for all $i \geq 1$, we get
\begin{align*}
f(y_{N}) - f(x_*) \leq \min\left\{(L_{N} + m) Q_1^{N}, \frac{1}{\sum_{i=1}^{N}\frac{1}{L_{i} + m}}\right\}V[x_{0}](x_*) + \widetilde{\delta} + 3\delta.
\end{align*}

This proves \eqref{analysis_algorithm_grad:ineq_2}.

\revv{
In the opposite case $L_N\leq \mu$, we obtain from \eqref{analysis_algorithm_grad:f_y_n}, \eqref{analysis_algorithm_grad:prove_2} and \eqref{analysis_algorithm_grad:prove_1} with $x=x_*$ that
\begin{align*}
&V[x_{N}](x_*) \leq \frac{ \widetilde{\delta} + 2\delta}{L_{N} + m}\\
&f(y_N)\leq f(x_{N}) + \delta \leq f(x_*) + \widetilde{\delta} + 3\delta.
\end{align*}
The r.h.s. of these two inequalities can be bounded from above by respectively the r.h.s. of \eqref{analysis_algorithm_grad:ineq_1} and \eqref{analysis_algorithm_grad:ineq_2}, which proves the latter inequalities. 
}
\end{proof}

\begin{remark}
\label{remark:grad_method}
Let us assume that $L_0 \leq L$, and we know that $\bar{L}_{k+1} \leq L$ for all $k \geq 0$ (or in other words, $(\delta, L, \mu, m, V)$-model exists for all $k \geq 0$). This means that \rev1{if $L_k$ in Algorithm \ref{Alg2} satisfies $L_k \geq L$, then inequality \eqref{exitLDL_G_S} holds.} Thus, $L_k \leq 2L$ for all $k \geq 0$.
From this fact we can obtain that $\sum_{i=1}^{N}\frac{1}{L_{i} + m} \geq \frac{N}{2L + m}$ and $q_k \leq q \eqdef \frac{2L - \mu}{2L + m} = 1-\frac{m+\mu}{2L+m}$. In view of \eqref{analysis_algorithm_grad:ineq_2} and \eqref{analysis_algorithm_grad:ineq_1}, we have
\begin{align*}
	f(y_N) - f(x_*) &\leq \min\left\{\frac{2L + m}{N}, (2L + m) q^{N}\right\}V[x_0](x_*) + \widetilde{\delta} + 3\delta,
\end{align*}
\begin{align*}
    V[x_{N}](x_*) &\leq q^{N} V[x_0](x_*) + (\widetilde{\delta} + 2\delta)\sum_{i=1}^{N}\frac{Q_{i+1}^{N}}{L_{i} + m}.
\end{align*}
\revv{The first term in the r.h.s. of the first equation above corresponds to exact gradient methods in the relatively smooth setting \cite{bauschke2016descent,lu2018relatively}. According to \cite{dragomir2019optimal}, this part of our bound can be interpreted as optimal since \cite{dragomir2019optimal} show that it can not be improved without additional assumptions on the Bregman divergence $V$.}
\end{remark}

\begin{remark}
\pd{The advantage of Algorithm \ref{Alg2} is that there is no need to know the true values of \ga{the parameters $L$ }\pd{ and $m$}.
Using the standard argument \cite{dvurechensky2016primal-dual} one can show that the number of oracle calls is less than $2N+\log_2\frac{2L}{L_0}$, where $N$ is the number of iterations of Algorithm \ref{Alg2}.}
\end{remark}

\begin{remark}
\attt{In view of Theorem \ref{Th:str_conv_adap}, if we would like to obtain the residual in the objective \eqref{analysis_algorithm_grad:ineq_2} to be smaller than $\varepsilon$, then it is sufficient to ensure $\widetilde{\delta} = O(\varepsilon)$  and $\delta = O(\varepsilon)$.} 
\end{remark}

\subsection{Adaptive Fast Gradient Method with $(\delta, L, \mu, m, V,\|\cdot \|)$-model}

In this section, we \rev1{slightly change the definition of inexact model and introduce $(\delta, L, \mu, m, V,\|\cdot \|)$-model of the objective (see Definition \ref{defRelStronglyConvexFast})}. This change allows us to obtain an accelerated method for problems with this type of the model.
The method is close to the accelerated mirror-descent type of methods (see \cite{tseng2008accelerated,lan2012optimal,dvurechensky2018computational}).
{
\rev1{In the new definition, we} use the square of the norm in the r.h.s. of \eqref{eq:Str_Conv_Model} instead of the function $V$, which gives the following modification of Definition \ref{defRelStronglyConvex}.
\begin{definition}\label{defRelStronglyConvexFast}
Let $\delta,L,\mu,m \geq 0$. We say that $\psi_{\delta}(x, y)$ is a $(\delta, L, \mu, m, V,\|\cdot \|)$-model of the function $f$ at a given point $y$
iff, for all $x \in Q$
\begin{equation}
\label{eq:Str_Conv_Model_Fast}
{\mu V[y](x)} \leq {f(x) - \left(f_{\delta}(y) + \psi_{\delta}(x, y)\right)} \leq {\frac{L}{2}\|x-y\|^2 + \delta}
\end{equation}
and $\psi_{\delta}(x, y)$ \rev1{is convex in $x$,} satisfies $\psi_{\delta}(x,x)=0$ for all $x \in Q$ and
\begin{equation}\label{mstongmodel_Fast}
\psi(x) \geqslant \psi(z) + \la \attt{g}, x- z \ra + m V[z](x), \quad \forall x, z \in Q \,\,\attt{\forall g \in \partial \psi(z)},
\end{equation}
where for fixed $y \in Q$ and any $x \in Q$ we denote $\psi(x) = \psi_{\delta}(x, y)$. \attt{In other words, $\psi(x)$ is relatively $m$-strongly convex relative to $d$.}
\end{definition}
}

As in the previous subsection, we assume that \pd{there exists} some constant $\bar{L}_{k+1}$ such that $(\delta_k, \bar{L}_{k+1}, \mu, m, V,\|\cdot \|)$-model of $f$ exists at $k$-th step ($k=0,..,N-1$) of Algorithm \ref{FastAlg2_strong}. \att{Unlike Algorithm \ref{Alg2}, we assume that \pd{the errors $\widetilde{\delta},\delta$ can depend on the iteration counter $k$, which is indicated by input} sequences $\{\widetilde{\delta}_k\}_{k\geq 0}$ and $\{\delta_k\}_{k\geq 0}$}. For instance, this allows obtaining the Universal Fast Gradient Method in which different values of $\{\delta_k\}_{k\geq 0}$ are required (see \cite{nesterov2015universal,baimurzina2017universal}) in each iteration. \attt{Moreover, \moh{the} sequence $\{\widetilde{\delta}_k\}_{k\geq 0}$ is not used explicitly and only appears in convergence rates of  Theorem~\ref{Th:fast_str_conv_adap}}.

\begin{theorem}
\label{Th:fast_str_conv_adap}
\pd{
Assume that  $\psi_{\delta}(x, y)$ is a $(\delta, L, \mu, m, V,\|\cdot \|)$-model according to Definition \ref{defRelStronglyConvexFast}. Also assume that $V[y](x)$ satisfies (1-SC) condition \ga{w.r.t. $\|\cdot\|$-norm}. Then, after $N$ iterations of Algorithm \ref{FastAlg2_strong}, we have
}
\begin{align}
    &f(x_N) - f(x_*) \leq \frac{V[u_0](x_*)}{A_N} + \frac{2\sum_{k=0}^{N-1}A_{k+1}\delta_k}{A_N} + \frac{\sum_{k=0}^{N-1}\widetilde{\delta}_k}{A_N}, \label{Th:fast_str_conv_adap:result_1} \\
    &V[u_N](x_*)\leq \frac{V[u_0](x_*)}{(1 + A_N\mu + \att{A_Nm})} + \frac{2\sum_{k=0}^{N-1}A_{k+1}\delta_k}{(1 + A_N\mu + \att{A_Nm})} + \frac{\sum_{k=0}^{N-1}\widetilde{\delta}_k}{(1 + A_N\mu + \att{A_Nm})}. \label{Th:fast_str_conv_adap:result_2}
    \end{align}
\end{theorem}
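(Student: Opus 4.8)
The plan is to run the estimate-sequence (Lyapunov) analysis that is standard for accelerated mirror-descent methods, adapted carefully to the inexact model and to its \emph{two} strong-convexity parameters $\mu$ and $m$. First I would fix notation for the quantities generated by Algorithm \ref{FastAlg2_strong}: the mirror sequence $u_k$, the output sequence $x_k$, the extrapolation point $y_k$ (a convex combination of the form $y_k=\tfrac{A_k}{A_{k+1}}x_k+\tfrac{\alpha_{k+1}}{A_{k+1}}u_k$), the step coefficients $\alpha_{k+1}$, and their partial sums $A_{k+1}=A_k+\alpha_{k+1}$ with $A_0=0$. Here $\alpha_{k+1}$ is chosen by the adaptive line search so that the descent condition
\[
f_\delta(x_{k+1})\le f_\delta(y_k)+\psi_\delta(x_{k+1},y_k)+\tfrac{L_{k+1}}{2}\|x_{k+1}-y_k\|^2+\delta_k
\]
holds together with a quadratic relation of the form $L_{k+1}\alpha_{k+1}^2=A_{k+1}(1+(\mu+m)A_k)$, whose exact shape is dictated by the cancellation needed below. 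The output update $x_{k+1}=\tfrac{A_k}{A_{k+1}}x_k+\tfrac{\alpha_{k+1}}{A_{k+1}}u_{k+1}$ is arranged so that $x_{k+1}-y_k=\tfrac{\alpha_{k+1}}{A_{k+1}}(u_{k+1}-u_k)$.

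The heart of the argument is a single-iteration inequality. I would apply Lemma \ref{lemma:str_conv} to the mirror step $u_{k+1}={\argmin_{x\in Q}}^{\widetilde{\delta}_k}\{\alpha_{k+1}\psi_\delta(x,y_k)+V[u_k](x)\}$, taking as the ``$\psi$'' of the lemma the function $\alpha_{k+1}\psi_\delta(\cdot,y_k)$ (relatively $\alpha_{k+1}m$-strongly convex by \eqref{mstongmodel_Fast}), with $\beta=1$ and $z=u_k$, to obtain for all $x\in Q$
\[
\alpha_{k+1}\psi_\delta(x,y_k)+V[u_k](x)\ge \alpha_{k+1}\psi_\delta(u_{k+1},y_k)+V[u_k](u_{k+1})+(1+\alpha_{k+1}m)V[u_{k+1}](x)-\widetilde{\delta}_k.
\]
Then I would use convexity of $\psi_\delta(\cdot,y_k)$ in its first argument to bound $\psi_\delta(x_{k+1},y_k)$ by the convex combination of $\psi_\delta(x_k,y_k)$ and $\psi_\delta(u_{k+1},y_k)$, bound $\psi_\delta(x_k,y_k)\le f(x_k)-f_\delta(y_k)$ via the left inequality of \eqref{eq:Str_Conv_Model_Fast}, and evaluate the lemma at $x=x_*$. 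Feeding in the descent condition to replace $\psi_\delta(x_{k+1},y_k)+\tfrac{L_{k+1}}{2}\|x_{k+1}-y_k\|^2$, and using the left model bound at $x_*$ (namely $\psi_\delta(x_*,y_k)\le f(x_*)-f_\delta(y_k)-\mu V[y_k](x_*)$) to introduce $f(x_*)$, produces a clean recursion.

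Multiplying through by $A_{k+1}$ and combining these pieces, the leftover quadratic $\tfrac{L_{k+1}\alpha_{k+1}^2}{2A_{k+1}}\|u_{k+1}-u_k\|^2$ is absorbed into the Bregman term $V[u_k](u_{k+1})$ by the (1-SC) assumption, which is exactly where the quadratic relation defining $\alpha_{k+1}$ is used; the parameters $\mu$ (from the left model bound at $x_*$) and $\alpha_{k+1}m$ (from the lemma) are shown to merge into the single coefficient $1+(\mu+m)A_{k+1}$. The result is a Lyapunov recursion
\[
A_{k+1}\big(f(x_{k+1})-f(x_*)\big)+(1+(\mu+m)A_{k+1})V[u_{k+1}](x_*)\le A_k\big(f(x_k)-f(x_*)\big)+(1+(\mu+m)A_k)V[u_k](x_*)+2A_{k+1}\delta_k+\widetilde{\delta}_k,
\]
where the factor $2A_{k+1}\delta_k$ collects the error from $f_\delta\le f$ and from the line search, each weighted by $A_{k+1}$, while $\widetilde{\delta}_k$ enters with weight one from the inexact prox. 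Telescoping from $k=0$ to $N-1$ and using $A_0=0$ together with nonnegativity of $V[u_N](x_*)$ gives \eqref{Th:fast_str_conv_adap:result_1} after dividing by $A_N$; discarding instead the objective residual and dividing by $1+(\mu+m)A_N$ gives \eqref{Th:fast_str_conv_adap:result_2}.

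The main obstacle I anticipate is the simultaneous bookkeeping of the two strong-convexity parameters: $\mu$ enters through the left inequality of the model at the comparison point $x_*$, whereas $m$ enters through the mirror step via Lemma \ref{lemma:str_conv}, and one must verify that they combine into the single telescoping factor $1+(\mu+m)A_k$. Tied to this is tuning the quadratic step relation and the convex weights defining $y_k$ and $x_{k+1}$ so that the term $\tfrac{L_{k+1}}{2}\|x_{k+1}-y_k\|^2$ cancels exactly against $V[u_k](u_{k+1})$ under (1-SC), all while routing the model error $\delta_k$ with weight $A_{k+1}$ and the solution error $\widetilde{\delta}_k$ with weight one; this is where the calculation is most delicate.
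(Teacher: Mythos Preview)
Your high-level architecture---establish a one-step Lyapunov inequality and telescope---matches the paper, and your accounting of the error weights ($2A_{k+1}\delta_k$ and $\widetilde{\delta}_k$) is right. But there is a concrete gap in how you handle the parameter $\mu$, and it stems from misreading the mirror step in Algorithm~\ref{FastAlg2_strong}.

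You write the prox subproblem as
\[
u_{k+1}={\argmin_{x\in Q}}^{\widetilde{\delta}_k}\bigl\{\alpha_{k+1}\psi_{\delta_k}(x,y_{k+1})+V[u_k](x)\bigr\},
\]
and then apply Lemma~\ref{lemma:str_conv} with $\beta=1$. This is not what the algorithm does. In \eqref{equmir2DL_strong} the objective $\phi_{k+1}$ has \emph{three} pieces:
\[
\phi_{k+1}(x)=\alpha_{k+1}\psi_{\delta_k}(x,y_{k+1})+(1+A_k\mu+A_km)\,V[u_k](x)+\alpha_{k+1}\mu\,V[y_{k+1}](x).
\]
Both extra features matter. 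First, the coefficient on $V[u_k](x)$ is $1+A_k(\mu+m)$, not $1$; this is what makes the quadratic relation $L_{k+1}\alpha_{k+1}^2=A_{k+1}(1+A_k(\mu+m))$ the right one for absorbing $\tfrac{L_{k+1}\alpha_{k+1}^2}{2A_{k+1}}\|u_{k+1}-u_k\|^2$ via (1-SC), and what turns the $V[u_k]$ coefficient into the telescoping factor. With your $\beta=1$ you would only get $(1+\alpha_{k+1}m)V[u_{k+1}](x)$ from the lemma, which neither telescopes with $(1+A_k(\mu+m))V[u_k](x)$ nor contains any $\mu$.

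Second, and this is the point you flag as ``the main obstacle'': in your scheme the $\mu$-term enters only through the left inequality of the model at $x_*$, producing $\alpha_{k+1}\mu\,V[y_{k+1}](x_*)$. That Bregman divergence is centered at $y_{k+1}$, not at $u_k$ or $u_{k+1}$, so there is no way to ``merge'' it with the $V[u_{k+1}](x_*)$ coming out of the prox lemma into a single $(1+(\mu+m)A_{k+1})V[u_{k+1}](x_*)$. Bregman divergences are not jointly convex in the center, and $y_{k+1}$ bears no simple relation to $u_{k+1}$. The paper's mechanism is different: the extra term $\alpha_{k+1}\mu\,V[y_{k+1}](x)$ is put \emph{into} the prox subproblem, the two-center version of the lemma (Lemma~\ref{lemma:fast_str_conv}, with $\beta=1+A_k(\mu+m)$ and $\gamma=\alpha_{k+1}\mu$) then yields on the right-hand side both $(1+A_{k+1}(\mu+m))V[u_{k+1}](x)$ and a residual $\alpha_{k+1}\mu\,V[y_{k+1}](x)$, and the latter cancels exactly against the model's left inequality. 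So the $\mu$-bookkeeping is not a merger of two unlike terms but a cancellation engineered by the design of $\phi_{k+1}$.

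Once you correct the mirror step and invoke Lemma~\ref{lemma:fast_str_conv} instead of Lemma~\ref{lemma:str_conv}, the rest of your outline (convexity of $\psi_{\delta_k}(\cdot,y_{k+1})$, the descent condition, the telescoping, and the two ways of discarding a nonnegative term to get \eqref{Th:fast_str_conv_adap:result_1} and \eqref{Th:fast_str_conv_adap:result_2}) goes through exactly as you describe and coincides with the paper's Lemma~\ref{lemma:fast_strong_main} and its proof of the theorem.
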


\begin{remark}
Despite the adaptive structure of Algorithm~\ref{FastAlg2_strong}, as in \cite{nesterov2015universal}, it can be shown that on average the algorithm up to logarithmic terms requires four \rev1{inexact computations of the objective} function and two computations of $(\delta, L, \mu, m, V,\|\cdot \|)$-model per iteration.
\end{remark}

\begin{algorithm}
\caption{{Fast adaptive gradient method with $(\delta, L,
\mu, m, V,\|\cdot \|)$-model}}
\label{FastAlg2_strong}
\begin{algorithmic}[1]
\STATE \textbf{Input:} $x_0$ is the starting point,
$\mu \geq 0$, $m \geq 0$,
$\{\delta_k\}_{k\geq 0}$ and
$ L_{0} > 0$. 
\STATE Set
$y_0 := x_0$, $u_0 := x_0$, $\alpha_0 := 0$, $A_0 := \alpha_0$
\FOR{$k \geq 0$}
\STATE Find the smallest \ga{integer} $i_k \geq 0$ such that
\vspace{-0.25cm}
\begin{equation}
\begin{gathered}
f_{\delta_k}(x_{k+1}) \leq f_{\delta_k}(y_{k+1}) + \psi_{\delta_k}(x_{k+1}, y_{k+1}) +\frac{L_{k+1}}{2}\norm{x_{k+1} - y_{k+1}}^2 + \delta_k,
\label{exitLDL_strong}
\end{gathered}
\end{equation}

where $L_{k+1} = 2^{i_k-1}L_k$, $\alpha_{k+1}$ is the largest root  of \moh{the} \rev1{equation}
\begin{gather}
\label{alpha_def_strong}
A_{k+1}{(1 + A_k \mu + \att{A_k m})}=L_{k+1}\alpha^2_{k+1},\quad A_{k+1} := A_k + \alpha_{k+1}, \;\text{and}
\end{gather}
\vspace{-0.7cm}
\begin{gather}
y_{k+1} := \frac{\alpha_{k+1}u_k + A_k x_k}{A_{k+1}}, \label{eqymir2DL_strong}
\end{gather}
\vspace{-0.7cm}
\begin{equation*}
\phi_{k+1}(x)=\alpha_{k+1}\psi_{\delta_k}(x, y_{k+1}) + {(1 + A_k\mu + \att{A_k m})} V[u_k](x) + {\alpha_{k+1} \mu V[y_{k+1}](x)},
\end{equation*}
\vspace{-0.7cm}
\begin{equation}\label{equmir2DL_strong}
u_{k+1} := {\argmin_{x \in Q}}^{\widetilde{\delta}_k}\phi_{k+1}(x), \;\;\text{\rev1{for some $\widetilde{\delta}_k \geq 0$}}
\end{equation}
\vspace{-0.7cm}
\begin{gather}
x_{k+1} := \frac{\alpha_{k+1}u_{k+1} + A_k x_k}{A_{k+1}}. \label{eqxmir2DL_strong}
\end{gather}
\ENDFOR
\end{algorithmic}
\end{algorithm}

In order to prove Theorem \ref{Th:fast_str_conv_adap} we need the following lemma.

\begin{lemma}
    \label{lemma:fast_str_conv}
	Let $\psi(x)$ be a \attt{relatively} $m$-strongly convex function \attt{relative to $d$} with $m \geq 0$, \rev1{i.e. \eqref{mstongmodel} holds,} and
	\begin{gather*}
	y = {\argmin_{x \in Q}}^{\widetilde{\delta}} \{\psi(x) + \beta V[z](x) + \gamma V[u](x)\},
	\end{gather*}
	where $\beta \geq 0$ and $\gamma \geq 0$.
Then
	\begin{equation*}
	\psi(x) + \beta V[z](x) + \gamma V[u](x) \geq \psi(y) + \beta V[z](y) + \gamma V[u](y) + (\beta + \gamma + m)V[y](x) - \widetilde{\delta} ,\,\,\, \forall x \in Q.
	\end{equation*}
\end{lemma}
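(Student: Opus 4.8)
The plan is to follow the same strategy as in the proof of Lemma \ref{lemma:str_conv}, the only new ingredient being that there are now two Bregman-divergence terms, $\beta V[z](x)$ and $\gamma V[u](x)$, rather than a single one. First I would record the inexact optimality condition for $y$. Writing $\Psi(x) = \psi(x) + \beta V[z](x) + \gamma V[u](x)$ and using $\nabla_y V[z](y) = \nabla d(y) - \nabla d(z)$ (and analogously for the center $u$), Definition \ref{solNemirovskiy} guarantees the existence of a subgradient $g \in \partial\psi(y)$ such that
\[
\la g + \beta \nabla_y V[z](y) + \gamma \nabla_y V[u](y), x - y \ra \geq -\widetilde{\delta}, \quad \forall x \in Q.
\]

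Next I would invoke the relative $m$-strong convexity of $\psi$, i.e. inequality \eqref{mstongmodel}, to obtain $\psi(x) - \psi(y) \geq \la g, x - y \ra + m V[y](x)$. Substituting into this the lower bound on $\la g, x - y \ra$ extracted from the optimality condition gives
\[
\psi(x) - \psi(y) \geq \la \beta \nabla_y V[z](y) + \gamma \nabla_y V[u](y), y - x \ra + m V[y](x) - \widetilde{\delta}.
\]

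Finally, I would apply the three-point identity established at the end of the proof of Lemma \ref{lemma:str_conv} twice, once with the center $z$ and once with the center $u$:
\[
\la \nabla_y V[z](y), y - x \ra = V[z](y) + V[y](x) - V[z](x),
\]
together with its analogue for $u$. Plugging these in and moving the $\beta$- and $\gamma$-terms to the left-hand side yields the claimed inequality; the coefficient $(\beta + \gamma + m)$ in front of $V[y](x)$ arises by collecting the two $V[y](x)$ contributions from the identities together with the $m V[y](x)$ term.

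I expect no genuine obstacle here, as the argument is essentially a line-by-line repetition of Lemma \ref{lemma:str_conv} with the additional $\gamma$-term carried along. The only point needing (minor) care is that the subdifferential of $\Psi$ at $y$ decomposes as $\partial\psi(y) + \beta \nabla_y V[z](y) + \gamma \nabla_y V[u](y)$, which is legitimate because both Bregman terms are differentiable in their first argument on ${\rm ri} Q$, so their gradients enter the subdifferential additively.
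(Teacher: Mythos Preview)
Your proposal is correct and follows exactly the approach the paper intends: the paper explicitly omits the proof, stating that it is similar to that of Lemma \ref{lemma:str_conv}, and your argument is precisely that proof with the second Bregman term $\gamma V[u](\cdot)$ carried along and the three-point identity applied once more. There is nothing to add.
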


\pd{We omit the proof of Lemma \ref{lemma:fast_str_conv} since it is similar to} the proof of Lemma \ref{lemma:str_conv}. 

\begin{lemma}
\label{lemma:fast_strong_main}
	For all $x \in Q$, we have 
	\begin{align*}
    		&A_{k+1} f(x_{k+1}) - A_{k} f(x_{k}) + ({1 + A_{k+1} \mu + \att{A_{k+1} m}}) V[u_{k+1}](x) - ({1 + A_k \mu + \att{A_k m}})V[u_{k}](x)\\
    		&\leq \alpha_{k+1}f(x) + 2\delta_k A_{k+1} + \widetilde{\delta}_k.
	\end{align*}
\end{lemma}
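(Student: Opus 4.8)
The plan is to run the standard estimating-sequence (potential-function) argument for accelerated methods, but adapted to the inexact $(\delta,L,\mu,m,V,\|\cdot\|)$-model. First I would start from the line-search exit condition \eqref{exitLDL_strong} and combine it with the oracle sandwich $f(x_{k+1})\le f_{\delta_k}(x_{k+1})+\delta_k$ (which follows from \eqref{eq:Str_Conv_Model_Fast} at $x=y$) to obtain
\[
f(x_{k+1}) \le f_{\delta_k}(y_{k+1}) + \psi_{\delta_k}(x_{k+1}, y_{k+1}) + \tfrac{L_{k+1}}{2}\norm{x_{k+1}-y_{k+1}}^2 + 2\delta_k .
\]
I would then multiply through by $A_{k+1}$, the idea being to convert everything into the telescoping potential in $V[u_k](x)$.

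The two key reductions come next. From \eqref{eqymir2DL_strong} and \eqref{eqxmir2DL_strong} one has the three-point identity $x_{k+1}-y_{k+1}=\frac{\alpha_{k+1}}{A_{k+1}}(u_{k+1}-u_k)$, so that, using the defining relation \eqref{alpha_def_strong} in the form $\frac{L_{k+1}\alpha_{k+1}^2}{A_{k+1}^2}=\frac{1+A_k\mu+A_k m}{A_{k+1}}$ together with the (1-SC) assumption $\norm{u_{k+1}-u_k}^2\le 2V[u_k](u_{k+1})$, the scaled quadratic term is bounded by $(1+A_k\mu+A_k m)V[u_k](u_{k+1})$. In parallel, since $x_{k+1}$ is the convex combination of $u_{k+1}$ and $x_k$ with weights $\alpha_{k+1}/A_{k+1}$ and $A_k/A_{k+1}$, convexity of $\psi_{\delta_k}(\cdot,y_{k+1})$ gives $A_{k+1}\psi_{\delta_k}(x_{k+1},y_{k+1})\le \alpha_{k+1}\psi_{\delta_k}(u_{k+1},y_{k+1})+A_k\psi_{\delta_k}(x_k,y_{k+1})$. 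Substituting both bounds leaves an expression involving $\alpha_{k+1}\psi_{\delta_k}(u_{k+1},y_{k+1})$ and $(1+A_k\mu+A_k m)V[u_k](u_{k+1})$, which is precisely the value $\phi_{k+1}(u_{k+1})$ up to the term $\alpha_{k+1}\mu V[y_{k+1}](u_{k+1})\ge 0$.

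At this stage I would apply Lemma \ref{lemma:fast_str_conv} to the prox step \eqref{equmir2DL_strong}, identifying $\psi(x)=\alpha_{k+1}\psi_{\delta_k}(x,y_{k+1})$ (relatively $\alpha_{k+1}m$-strongly convex by \eqref{mstongmodel_Fast}), $\beta=1+A_k\mu+A_k m$ with $z=u_k$, and $\gamma=\alpha_{k+1}\mu$ with $u=y_{k+1}$. The lemma lower-bounds $\phi_{k+1}(x)$ by $\phi_{k+1}(u_{k+1})+(\beta+\gamma+\alpha_{k+1}m)V[u_{k+1}](x)-\widetilde{\delta}_k$, and the crucial coefficient computation $\beta+\gamma+\alpha_{k+1}m=1+A_{k+1}\mu+A_{k+1}m$ (using $A_{k+1}=A_k+\alpha_{k+1}$) produces exactly the coefficient on $V[u_{k+1}](x)$ needed for the telescope. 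Rearranging this inequality to bound $\phi_{k+1}(u_{k+1})$ from above by $\alpha_{k+1}\psi_{\delta_k}(x,y_{k+1})+\beta V[u_k](x)+\gamma V[y_{k+1}](x)-(1+A_{k+1}\mu+A_{k+1}m)V[u_{k+1}](x)+\widetilde{\delta}_k$ (dropping the nonnegative $\alpha_{k+1}\mu V[y_{k+1}](u_{k+1})$) and feeding it back is the heart of the argument.

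Finally I would write $A_{k+1}f_{\delta_k}(y_{k+1})=A_kf_{\delta_k}(y_{k+1})+\alpha_{k+1}f_{\delta_k}(y_{k+1})$, group the terms as $A_k\bigl(f_{\delta_k}(y_{k+1})+\psi_{\delta_k}(x_k,y_{k+1})\bigr)+\alpha_{k+1}\bigl(f_{\delta_k}(y_{k+1})+\psi_{\delta_k}(x,y_{k+1})\bigr)$, and apply the \emph{left} inequality of \eqref{eq:Str_Conv_Model_Fast} to each group, yielding $A_kf(x_k)+\alpha_{k+1}f(x)-A_k\mu V[y_{k+1}](x_k)-\alpha_{k+1}\mu V[y_{k+1}](x)$. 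The two $\pm\alpha_{k+1}\mu V[y_{k+1}](x)$ terms cancel exactly against the $\gamma V[y_{k+1}](x)$ contribution, and dropping the remaining nonpositive $-A_k\mu V[y_{k+1}](x_k)$ gives the claim. I expect the main obstacle to be purely the bookkeeping: keeping the strong-convexity parameters $\mu$ and $m$ and the mixing factors $\alpha_{k+1},A_k,A_{k+1}$ aligned so that the $V[y_{k+1}](x)$ terms cancel and the $V[u_{k+1}](x)$ coefficient collapses to $1+A_{k+1}\mu+A_{k+1}m$; every analytic ingredient (the three-point identity, convexity of $\psi$, the $\alpha$-equation, and Lemma \ref{lemma:fast_str_conv}) is otherwise routine.
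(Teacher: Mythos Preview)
Your proposal is correct and follows essentially the same route as the paper's proof: the same chain starting from \eqref{exitLDL_strong}, the identity $x_{k+1}-y_{k+1}=\frac{\alpha_{k+1}}{A_{k+1}}(u_{k+1}-u_k)$ combined with \eqref{alpha_def_strong} and (1-SC), convexity of $\psi_{\delta_k}(\cdot,y_{k+1})$ along the averaging, the application of Lemma~\ref{lemma:fast_str_conv} with exactly your identifications, and the left inequality of \eqref{eq:Str_Conv_Model_Fast}. The only cosmetic difference is that the paper applies the left inequality to the $A_k$-group earlier (right after the convexity split) rather than at the very end together with the $\alpha_{k+1}$-group, but the bookkeeping and the cancellation of $\alpha_{k+1}\mu V[y_{k+1}](x)$ are identical.
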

\begin{proof}
Since by Definition \ref{defRelStronglyConvexFast} with $x = y$, one has $f(x) - \delta \leq  f_{\delta}(x)  \leq f(x)$, and using \eqref{exitLDL_strong}, we have
	\begin{align*}
	f(x_{k+1}) \stackrel{\rev1{\eqref{eq:Str_Conv_Model_Fast}}}{\leq} f_{\delta_k}(y_{k+1}) + \delta_k \stackrel{\rev1{\eqref{exitLDL_strong}}}{\leq} f_{\delta_k}(y_{k+1}) + \psi_{\delta_k}(x_{k+1},y_{k+1})  + \frac{L_{k+1}}{2}\norm{x_{k+1} - y_{k+1}}^2 + 2\delta_k.
	\end{align*}
Substituting in this expression definitions \eqref{eqxmir2DL_strong} and \eqref{eqymir2DL_strong} of the points $x_{k+1}$, $y_{k+1}$, we obtain that
	\begin{align*}
	f(x_{k+1})
	&\leq f_{\delta_k}(y_{k+1}) + \psi_{\delta_k}\left(\frac{\alpha_{k+1}u_{k+1} + A_k x_k}{A_{k+1}},y_{k+1}\right)\\
	&\hspace{2em}+ \frac{L_{k+1}}{2}\norm{\frac{\alpha_{k+1}u_{k+1} + A_k x_k}{A_{k+1}} - \rev1{\frac{\alpha_{k+1}u_k + A_k x_k}{A_{k+1}}}}^2 + 2\delta_k\\
	&= f_{\delta_k}(y_{k+1}) + \psi_{\delta_k}\left(\frac{\alpha_{k+1}u_{k+1} + A_k x_k}{A_{k+1}},y_{k+1}\right)+\frac{L_{k+1} \alpha^2_{k+1}}{2 A^2_{k+1}}\norm{u_{k+1} - u_k}^2 + 2\delta_k.
	\end{align*}
Since \rev1{$A_{k+1}=A_k + \alpha_{k+1}$ and}, \rev1{by Definition \ref{defRelStronglyConvexFast},} $\psi_{\delta_k}(\cdot,y)$ is convex, we have
	\begin{align*}
	f(x_{k+1})
	&\leq\frac{A_k}{A_{k+1}}\left(f_{\delta_k}(y_{k+1}) + \psi_{\delta_k}(x_k, y_{k+1})\right)+\frac{\alpha_{k+1}}{A_{k+1}}\left(f_{\delta_k}(y_{k+1}) +
	 \psi_{\delta_k}(u_{k+1}, y_{k+1})\right)\\
	 &\hspace{2em}+ \frac{L_{k+1} \alpha^2_{k+1}}{2 A^2_{k+1}}\norm{u_{k+1} - u_k}^2 + 2\delta_k.
   \end{align*}
In view of the definition \eqref{alpha_def_strong} for the sequence $\alpha_{k+1}$, we obtain
   \begin{align*}
	 f(x_{k+1})&\leq\frac{A_k}{A_{k+1}}\left(f_{\delta_k}(y_{k+1}) + \psi_{\delta_k}(x_k,y_{k+1})\right) +\frac{\alpha_{k+1}}{A_{k+1}}\Big(f_{\delta_k}(y_{k+1}) + \psi_{\delta_k}(u_{k+1},y_{k+1})\\
	 &\hspace{2em}+ \frac{{1 + A_k \mu + \att{A_k m}}}{2 \alpha_{k+1}}\norm{u_{k+1} - u_k}^2\Big) + 2\delta_k.
   \end{align*}
Using (1-SC) condition w.r.t. norm for $V$ and the left inequality in \rev1{\eqref{eq:Str_Conv_Model_Fast}}, we get
   \begin{align}
   \begin{split}
   \label{lemma:fast_str_conv:ineq_1}
	 f(x_{k+1}) &\leq \frac{A_k}{A_{k+1}}f_{\delta_k}(x_{k})+\frac{\alpha_{k+1}}{A_{k+1}}\Big(f_{\delta_k}(y_{k+1}) + \psi_{\delta_k}(u_{k+1},y_{k+1})\\
	 &\hspace{2em}+ \frac{{1 + A_k \mu + \att{A_k m}}}{\alpha_{k+1}}V[u_k](u_{k+1})\Big) + 2\delta_k.
	 \end{split}
  \end{align}
By Lemma \ref{lemma:fast_str_conv} for the optimization problem in \eqref{equmir2DL_strong} with $\psi(x) = \alpha_{k+1}\psi_{\delta_k}(x, y_{k+1})$, $\beta = 1 + A_k\mu + A_k m$, $z = u_k$, $\gamma = \alpha_{k+1}\mu$, and $u = y_{k+1}$,
it holds that
\begin{align*}
&\alpha_{k+1}\psi_{\delta_k}(u_{k+1}, y_{k+1}) + (1 + A_k \mu + \att{A_k m})V[u_{k}](u_{k+1}) + \alpha_{k+1} \mu V[y_{k+1}](u_{k+1}) \\
&\hspace{2em}+ (1 + A_{k+1}\mu + \att{A_{k+1} m})V[u_{k+1}](x) - \widetilde{\delta}_k \\
&\leq \alpha_{k+1}\psi_{\delta_k}(x, y_{k+1}) + (1 + A_k \mu + \att{A_k m})V[u_{k}](x) + \alpha_{k+1} \mu V[y_{k+1}](x).
\end{align*}
From the fact that $V[y_{k+1}](u_{k+1}) \geq 0$, we have
\begin{align}
\begin{split}
   \label{lemma:fast_str_conv:ineq_2}
&\alpha_{k+1}\psi_{\delta_k}(u_{k+1}, y_{k+1}) + (1 + A_k\mu + \att{A_k m})V[u_{k}](u_{k+1})\\
&\leq \alpha_{k+1}\psi_{\delta_k}(x, y_{k+1}) + (1 + A_k\mu + \att{A_k m})V[u_{k}](x)\\
&\hspace{2em}- (1 + A_{k+1}\mu + \att{A_{k+1} m})V[u_{k+1}](x) + \alpha_{k+1} \mu V[y_{k+1}](x) + \widetilde{\delta}_k.
\end{split}
\end{align}
Combining \eqref{lemma:fast_str_conv:ineq_1} and \eqref{lemma:fast_str_conv:ineq_2}, we obtain
  \begin{align*}
  f(x_{k+1})
     &\leq \frac{A_k}{A_{k+1}} f(x_k)+\frac{\alpha_{k+1}}{A_{k+1}}\Big(f_{\delta_k}(y_{k+1}) + \psi_{\delta_k}(x,y_{k+1}) + {\mu V[y_{k+1}](x)}\\
	 &\hspace{2em}+ \frac{{1 + A_k \mu + \att{A_k m}}}{\alpha_{k+1}}V[u_k](x) - \frac{{1 + A_{k+1} \mu + \att{A_{k+1} m}}}{\alpha_{k+1}}V[u_{k+1}](x) + \frac{\widetilde{\delta}_k}{\alpha_{k+1}}\Big) + 2\delta_k.
\end{align*}
We finish the proof of Lemma \ref{lemma:fast_strong_main} applying the left inequality in \eqref{eq:Str_Conv_Model_Fast}
\begin{align*}
    f(x_{k+1})
	 &\leq \frac{A_k}{A_{k+1}} f(x_k) +
	 \frac{\alpha_{k+1}}{A_{k+1}} f(x) \\
	 &\hspace{2em}+ \frac{{1 + A_k \mu + \att{A_k m}}}{A_{k+1}}V[u_k](x) - \frac{{1 + A_{k+1} \mu + \att{A_{k+1} m}}}{A_{k+1}}V[u_{k+1}](x) + 2\delta_k + \frac{\widetilde{\delta}_k}{A_{k+1}}.
	\end{align*}
\end{proof}
\begin{proof}[Proof of Theorem \ref{Th:fast_str_conv_adap}]
We telescope the inequality in Lemma \ref{lemma:fast_strong_main} for $k$ from $0$ to $N-1$ and take $x = x_*$:
\begin{align}
\begin{split}
\label{Th:str_conv_adap:proof:ineq_1}
		A_{N} f(x_{N})\leq A_{N}f(x_*) + V[u_0](x_*) - (1 + A_N(\mu + m)) V[u_N](x_*) + 2\sum_{k=0}^{N-1}A_{k+1}\delta_k + \sum_{k=0}^{N-1}\widetilde{\delta}_k.
\end{split}
\end{align}
Since $V[u_{k+1}](x_*) \geq 0$ for all $k \geq 0$, we have
\begin{align*}
		&A_{N} f(x_{N}) - A_{N}f(x_*) \leq V[u_0](x_*)  + 2\sum_{k=0}^{N-1}A_{k+1}\delta_k + \sum_{k=0}^{N-1}\widetilde{\delta}_k.
\end{align*}
The last inequality proves \eqref{Th:fast_str_conv_adap:result_1}.
\pd{Inequality \eqref{Th:fast_str_conv_adap:result_2} is a straightforward from \eqref{Th:str_conv_adap:proof:ineq_1} since $f(x) \geq f(x_*)$ for all $x \in Q$.}
\end{proof}

\pd{The next lemma is proved in Appendix \ref{proof:a_n_sequence} and gives \rev1{a lower bound for} the growth rate of the sequence $A_N$, see \cite{nesterov2013gradient, devolder2013firstCORE, gasnikov2018universal}.}

\begin{lemma}
\label{lemma:a_n_sequence}
For all $N\geq 0$,
\begin{align*}
A_N \geq \max\left\{\frac{1}{4}\left(\sum_{k=0}^{N-1}\frac{1}{\sqrt{L_{k+1}}}\right)^2, \frac{1}{L_1}\prod_{k=1}^{N-1}\left(1 + \sqrt{\frac{\mu + m}{4L_{k+1}}}\right)^2\right\}..
\end{align*}
\end{lemma}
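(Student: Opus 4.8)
The plan is to work directly from the defining recursion for the sequence $\{A_k\}$. From \eqref{alpha_def_strong} with $\alpha_{k+1} = A_{k+1} - A_k$ and writing $\sigma := \mu + m$, we have for every $k \geq 0$
\begin{equation}
A_{k+1}(1 + A_k \sigma) = L_{k+1}(A_{k+1} - A_k)^2. \label{eq:plan_rec}
\end{equation}
Since $A_0 = 0$, the case $k = 0$ gives $A_1 = L_1 A_1^2$, i.e. $A_1 = 1/L_1$; this serves as the base case for the second bound. I would prove the two lower bounds inside the maximum separately, each by telescoping \eqref{eq:plan_rec}, since it suffices to dominate each of the two arguments.

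For the first bound I would discard the strong-convexity contribution by using $1 + A_k\sigma \geq 1$ in \eqref{eq:plan_rec}, which gives $L_{k+1}(A_{k+1} - A_k)^2 \geq A_{k+1}$, hence $A_{k+1} - A_k \geq \sqrt{A_{k+1}/L_{k+1}}$ (the difference is positive since $\alpha_{k+1} > 0$). Then the standard telescoping trick applies:
\begin{equation}
\sqrt{A_{k+1}} - \sqrt{A_k} = \frac{A_{k+1} - A_k}{\sqrt{A_{k+1}} + \sqrt{A_k}} \geq \frac{A_{k+1} - A_k}{2\sqrt{A_{k+1}}} \geq \frac{1}{2\sqrt{L_{k+1}}}. \label{eq:plan_tel}
\end{equation}
Summing \eqref{eq:plan_tel} over $k = 0, \dots, N-1$ and using $A_0 = 0$ yields $\sqrt{A_N} \geq \tfrac12\sum_{k=0}^{N-1} 1/\sqrt{L_{k+1}}$, and squaring delivers the first term of the maximum.

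For the second bound I would establish the per-step multiplicative estimate $A_{k+1} \geq A_k\,(1 + c_{k+1})^2$ with $c_{k+1} := \sqrt{\sigma/(4L_{k+1})}$, valid for $k \geq 1$ (where $A_k > 0$); taking the product over $k = 1, \dots, N-1$ and inserting $A_1 = 1/L_1$ then reproduces $\tfrac{1}{L_1}\prod_{k=1}^{N-1}(1 + \sqrt{(\mu+m)/(4L_{k+1})})^2$ exactly. To prove the per-step estimate, I would view $A_{k+1}$ as the larger root of the upward-opening quadratic $q(x) = L_{k+1}(x - A_k)^2 - x(1 + A_k\sigma)$; since $q(A_k) = -A_k(1 + A_k\sigma) \leq 0$, the point $A_k$ lies between the two roots, so for any $B \geq A_k$ the inequality $q(B) \leq 0$ forces $B \leq A_{k+1}$. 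I would then evaluate $q$ at the candidate $B = A_k(1 + c_{k+1})^2 \geq A_k$: using $L_{k+1}c_{k+1}^2 = \sigma/4$ and expanding, the computation reduces to checking $(2 + c_{k+1})^2 - 4(1 + c_{k+1})^2 = -4c_{k+1} - 3c_{k+1}^2 \leq 0$, which shows both contributions to $q(B)$ are nonpositive, hence $q(B) \leq 0$ and $A_{k+1} \geq B$.

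The telescoping in \eqref{eq:plan_tel} and the algebraic expansion of $q(B)$ are routine; the only genuine care is in the second bound, where one must correctly identify $A_{k+1}$ as the \emph{largest} root and argue via the sign of $q$ at the candidate point rather than solving the quadratic explicitly. Keeping track of the index shift so that the product runs over $L_2, \dots, L_N$ (matching $\prod_{k=1}^{N-1}$) is where an off-by-one error is easiest to make, so I would verify the endpoints $k=1$ (involving $L_2$) and $k = N-1$ (involving $L_N$) explicitly.
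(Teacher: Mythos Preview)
Your proposal is correct. For the first bound, your argument coincides with the paper's: both drop the factor $1+A_k\sigma\geq 1$, obtain $A_{k+1}\leq L_{k+1}(A_{k+1}-A_k)^2$, and telescope the square roots via $\sqrt{A_{k+1}}-\sqrt{A_k}\geq (A_{k+1}-A_k)/(2\sqrt{A_{k+1}})\geq 1/(2\sqrt{L_{k+1}})$.

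For the second bound you take a genuinely different route. The paper does not analyze the quadratic directly; instead it reuses the factorization from the first bound, namely $A_{k+1}(1+A_k\sigma)=L_{k+1}(A_{k+1}-A_k)^2\leq 4L_{k+1}A_{k+1}(\sqrt{A_{k+1}}-\sqrt{A_k})^2$, and then drops the constant $1$ on the left to keep only $\sigma A_kA_{k+1}$. Dividing by $A_{k+1}$ and taking roots immediately yields $\sqrt{A_{k+1}}-\sqrt{A_k}\geq\sqrt{\sigma/(4L_{k+1})}\,\sqrt{A_k}$, which is exactly your per-step multiplicative estimate. Your approach instead identifies $A_{k+1}$ as the larger root of $q(x)=L_{k+1}(x-A_k)^2-x(1+A_k\sigma)$ and checks $q(B)\leq 0$ at the candidate $B=A_k(1+c_{k+1})^2$. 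Both routes deliver the same recursion; the paper's is shorter (two lines once the factorization is in hand), while yours is more self-contained and avoids the somewhat ad hoc step of discarding the $1$. Either way, the product telescopes from $A_1=1/L_1$ exactly as you describe, with the correct index range $k=1,\dots,N-1$.
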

\begin{remark}
\label{remark:fast_strong_lip}
Let us assume that the function $f$ has $L$-Lipschitz continuous gradient. This means that for $L_k \geq L$, the inequality \eqref{exitLDL_strong} always holds, whence, $L_k \leq 2L$, assuming that $L_0 \leq L$. \rev1{Substituting this inequality in the bound of} Lemma  \ref{lemma:a_n_sequence}, we have $A_N \geq  \frac{N^2}{4L}$
and
\begin{align*}
A_N \geq  \frac{1}{2L} \left(1 + \frac{1}{4}\sqrt{\frac{\mu + m}{L}}\right)^{2(N-1)} \geq \frac{1}{2L}\exp\left(\frac{N-1}{4}\sqrt{\frac{\mu + m}{L}}\right).
\end{align*}
In the last inequality we used inequality $\log(1+2x) \geq x$ for all $x \in [0,\frac{1}{4}]$.
Combining Theorem \ref{Th:fast_str_conv_adap} and Lemma \ref{lemma:a_n_sequence} we have
\begin{align}
\begin{split}
\label{remark:optimal_conv_rate}
f(x_N) - f(x_*) &\leq \min\left\{\frac{8L}{N^2}, 2L\exp\left(-\frac{N-1}{4}\sqrt{\frac{\mu + m}{L}}\right)\right\}V[u_0](x_*)\\
&\hspace{2em}+ \frac{2\sum_{k=0}^{N-1}A_{k+1}\delta_k}{A_N} + \frac{\sum_{k=0}^{N-1}\widetilde{\delta}_k}{A_N}.
\end{split}
\end{align}
The first term in the right hand side of ineqaulity \eqref{remark:optimal_conv_rate}  is up to a constant factor optimal for $\mu$-strongly convex functions with $L$-Lipschitz continuous gradient.

Note that in \cite{devolder2013firstCORE} for a non-adaptive fast gradient method with $(\delta, L, \mu)$-oracle, \rev1{which is a particular case of our inexact model}, for the case when $\delta_k$ is a constant it is shown that
\begin{align}
\label{devolder_error}
\frac{\sum_{k=0}^{N-1}A_{k+1}\delta}{A_N} \leq \min\left\{\left(\frac{1}{3}N + 2.4\right), \left(1 + \sqrt{\frac{L}{\mu}}\right)\right\}\delta.
\end{align}
This means that for $\mu > 0$, the error $\delta$ does not accumulate.

\attt{Let us briefly discuss what happens if we can control the errors $\delta_k,\widetilde{\delta}_k$ and choose them such that the residual in the objective is smaller than $\varepsilon$. For this, we need to ensure $\frac{\sum_{k=0}^{N-1}\widetilde{\delta}_k}{A_N} = O(\varepsilon)$ for all $k \geq 0$. One way to guarantee it is to take $\widetilde{\delta}_k = O(\varepsilon \alpha_{k+1})$ for all $k \geq 0$. From Lemma  \ref{lemma:a_n_sequence}, $L_k \leq 2L$ for all $k \geq 0$, and \eqref{alpha_def_strong}, we have $\alpha_k \geq \frac{\sqrt{2} k}{4L}$. It means that the sequence $\{\widetilde{\delta_k}\}_{k\geq 0}$ can be an increasing sequence.}

\attt{Additionally, taking $\delta_k = O(\frac{\alpha_{k+1}}{A_{k+1}}\varepsilon)$, we can guarantee $\frac{\sum_{k=0}^{N-1}A_{k+1}\delta_k}{A_N} = O(\varepsilon)$.
Therefore, the sequence $\{\delta_k\}_{k\geq 0}$, in general, should decrease with the rate of the order $1/k$ in order to obtain an $\varepsilon$-solution for problem \eqref{eq:Problem}.
}

\end{remark}





\begin{remark}
\label{remark:FastGradConvRate}
In view of assumptions from Remark \ref{remark:fast_strong_lip}. For the case when $\mu = m = 0$ Algorithm \ref{FastAlg2_strong} can guarantee the following convergence rate:
\begin{align}
\label{eq:FastGradConvRate}
	f(x_N) - f_* &\leq \frac{8LV[x_0](x_*)}{N^2} + 2N\delta + \frac{4L\widetilde{\delta}}{N}.
\end{align}
A similar result was shown in \cite{tyurin2017fast}.
\end{remark}

\begin{remark}
Let us analyze the convergence rate \rev1{of $V[u_N](x_*)$ which is guaranteed by} \eqref{Th:fast_str_conv_adap:result_2} in Theorem \ref{Th:fast_str_conv_adap}. \rev1{Since we make a (1-SC) assumption on the function $d$, we have that $\frac{1}{2}\|u_N-x_*\|^2 \leq V[u_N](x_*)$ and the convergence rate for $V[u_N](x_*)$ gives also convergence rate for $\|u_N-x_*\|^2$.} There are two different scenarios:
\begin{enumerate}
\item $\mu = m = 0$.
    In this case we have:
        \begin{align*}
    V[u_N](x_*)\leq V[u_0](x_*) + 2\sum_{k=0}^{N-1}A_{k+1}\delta_k +\sum_{k=0}^{N-1}\widetilde{\delta}_k.
    \end{align*}
    We can only bound $V[u_N](x_*)$ by $V[u_0](x_*)$ up to additive noise.
\item
    $\mu + m > 0$.
    Using Lemma \ref{lemma:a_n_sequence} we can see that Theorem \ref{Th:fast_str_conv_adap} guarantees the linear convergence in argument up to additive noise.
\end{enumerate}
Note that convergence rates for the objective and the argument are obtained for different sequences $x_N$ and $u_N$, respectively.
\end{remark}

\subsection{Universal conditional gradient (Frank--Wolfe) method}
\label{univ_frank_wolfe}
Let us show an example of $(\delta, L, \mu, m, V,\|\cdot \|)$-model's application. \rev1{We assume that $\mu = 0, m = 0$ and use Algorithm \ref{FastAlg2_strong} to propose a universal Frank--Wolfe method.}
In order to construct the universal Frank--Wolfe method let us introduce the following constraints to the optimization problem \eqref{eq:Problem}:
\begin{enumerate}
    \item The set $Q$ is bounded w.r.t $V[y](x)$: $\exists R_Q \in \mathbb{R}:~   V[y](x) \leq R_Q^2\quad \forall x, y \in Q.$
    \item The function $f(x)$ has H\"older-continuous subgradients:
    \begin{equation*}
    \norm{\nabla f(x) - \nabla f(y)}_* \leq L_\nu\norm{x - y}^\nu\,\,\,\,\forall x,y \in Q.
    \end{equation*}
    From this we can get, for any $\delta > 0$, an inequality (see \cite{nesterov2015universal})
    \begin{gather}
    0 \leq f(x) - f(y) - \langle\nabla f(y), x - y \rangle \leq \frac{L(\delta)}{2}\norm{x - y}^2 + \delta \,\,\,\, \forall x,y \in Q,
    \end{gather}
    where \begin{gather*}L(\delta)=L_\nu\left[\frac{L_\nu}{2\delta}\frac{1-\nu}{1+\nu}\right]^\frac{1-\nu}{1+\nu}.\end{gather*}
\end{enumerate}
\rev1{Using the same arguments as in \cite{tyurin2017fast}, we can show that
the point $u_{k+1}=\arg \min_{x \in Q} \alpha_{k+1}\psi_{\delta_k}(x, y_{k+1})$ satisfies $u_{k+1} = {\argmin_{x \in Q}}^{\widetilde{\delta}_k}\phi_{k+1}(x)$ with $\widetilde{\delta}_k = 2R^2_Q$. It means that iteration $u_{k+1}$ defined in this way satisfies \eqref{equmir2DL_strong}. We also set $\delta_k = \varepsilon \frac{\alpha_{k+1}}{4A_{k+1}}$ as it was proposed in \cite{nesterov2015universal} for Universal Fast Gradient Method.
This choice of $\delta_k$ and the fact that the objective function has H\"older continuous subgradient, using the same arguments as in Theorem 3 of \cite{nesterov2015universal}, guarantees that in \eqref{Th:fast_str_conv_adap:result_1} $\frac{2\sum_{k=0}^{N-1}A_{k+1}\delta_k}{A_N} \leq \frac{\varepsilon}{2}$.
Combining it again with \eqref{Th:fast_str_conv_adap:result_1}, $V[y](x) \leq R_Q^2$ and $\widetilde{\delta}_k = 2R^2_Q$, we obtain from Theorem  \ref{Th:fast_str_conv_adap} that
\begin{equation*}
f(x_N) - f(x_*) \leq \frac{R_Q^2}{A_N} + \frac{\e}{2} + \frac{2R^2_QN}{A_N} \leq \frac{3R^2_QN}{A_N} + \frac{\e}{2}.
\end{equation*}}
Using the same arguments as in Theorem 3 of \cite{nesterov2015universal}, we obtain that
\begin{equation}\label{AN_universal2} A_N \geq \frac{N^\frac{1+3\nu}{1+\nu}\epsilon^\frac{1-\nu}{1+\nu}}{2^\frac{3+5\nu}{1+\nu}L_\nu^\frac{2}{1+\nu}}.\end{equation}
Using this inequality, we obtain the following upper bound for the number of steps in order to guarantee $f(x_N) - f(x_*) \leq \e$
\begin{gather*}
N \leq \inf_{\nu\in(0,1]}\left[2^\frac{3+4\nu}{\nu}\left(\frac{L_\nu R_Q^{1+\nu}}{\e}\right)^\frac{1}{\nu}\right],
\end{gather*}
\rev1{where the infimum can be taken since neither $\nu$ nor $L_{\nu}$ is not used in the algorithm. This bound justifies the word "universal" in the name of the algorithm since the algorithm is the same for any values of $\nu \in [0,1]$ and works with the best possible convergence guarantee. We call the proposed algorithm conditional gradient (Frank--Wolfe) method since in a simple situation $\psi_{\delta_k}(x, y_{k+1}) = \langle \nabla f(y_{k+1}), x - y_{k+1} \rangle$, finding $u_{k+1}=\arg \min_{x \in Q} \alpha_{k+1}\psi_{\delta_k}(x, y_{k+1})$ is reduced to solving a linear minimization problem over the set $Q$. This method for composite optimization with H\"older-continuous gradient was analyzed in \cite{nesterov2018complexity} and for a non-composite case was very recently analyzed in \cite{zhao2020analysis}.
Our method is more general, since it is universal and does not require the knowledge of the parameters $\nu, L_{\nu}$, and also can utilize general inexact models described by Definition \ref{defRelStronglyConvexFast}, including inexact gradients, composite optimization, inexact linear minimization oracle.
}

\section{Inexact Model for Variational Inequalities}
\label{VI}

In this section, we go beyond minimization problems and propose an abstract inexact model counterpart for variational inequalities. \pd{As a special case in Example \ref{Examp_Rel_Smooth_VI}, we introduce relative smoothness for operators in the spirit of \cite{lu2018relatively}, where it was introduced for optimization problems. Further, we propose a generalization of the Mirror-Prox algorithm for this general case of the inexact model of the operator and abstract variational inequalities. One of the main features of our algorithm is its adaptation to
generalized inexact parameter of smoothness.}
\pd{As a special case}, we propose a universal method for variational inequalities with complexity $
O\left(\left(\frac{1}{\varepsilon}\right)^{\frac{2}{1+\nu}}\right)$, where $\varepsilon$ is the desired accuracy of the solution and $\nu$ is the H\"older exponent of the operator.
According to the lower bounds in \cite{Optimal}, this algorithm is optimal for $\nu = 0$ (bounded variation of the operator) and $\nu = 1$ (Lipschitz continuity of the operator).
Based on the model for VI and functions, we introduce an inexact model for saddle-point problems (see Definition~\ref{DefSaddleModel}). We are also motivated by mixed variational inequalities \cite{Konnov_2017,Bao_Khanh} and composite saddle-point problems \cite{chambolle2011first-order}.

Formally speaking, we consider the problem of finding the solution $x_*\in Q$ for VI in the following abstract form
\begin{equation}\label{eq13}
\psi(x,x_*)\geqslant 0 \quad \forall x \in Q
\end{equation}
for some convex compact set $Q\subset\mathbb{R}^n$ and some function $\psi:Q\times Q\rightarrow\mathbb{R}$. Assuming the abstract monotonicity of the function $\psi$
\begin{equation}\label{eq:abstr_monot}
\psi(x,y)+\psi(y,x)\leq0\;\;\;\forall x,y\in Q,
\end{equation}
any solution to \eqref{eq13} is a solution of the following inequality
\begin{equation}\label{eq115}
\max_{x\in Q}\psi(x_*,x)\leq 0  
\end{equation}

In the general case, we make an assumption about the existence of a solution $x_*$ of the problem \eqref{eq13}. As a particular case, if for some operator $g: Q \rightarrow\mathbb{R}^n$ we set $\psi(x,y)=\langle g(y),x-y\rangle\;\;\forall x,y\in Q$,
then \eqref{eq13} and \eqref{eq115} are equivalent, respectively, to a standard strong and weak variational inequality with the operator $g$.

We start with a concept of $(\delta, L, V)$-model for problems \eqref{eq13} and \eqref{eq115}.

\begin{definition}\label{Def_Model_VI}
We say that a function $\psi$ has $(\delta, L\pd{,V})$-model $\psi_{\delta} (x, y)$ for some fixed values \pd{$\delta>0$ and $L=L(\delta)>0$}
if the following properties hold for each $x, y, z \in Q$:
\begin{enumerate}
\item[(i)] $\psi(x, y) \leq \psi_{\delta}(x, y) + \delta$;
\item[(ii)] $\psi_{\delta} (x, y)$ convex in the first variable; \item[(iii)] $\psi_{\delta}(x,x)=0$;
\item[(iv)] ({\it abstract $\delta$-monotonicity})
\begin{equation}\label{eq:abstr_monot_delta}
\psi_{\delta}(x,y)+\psi_{\delta}(y,x)\leq \delta;
\end{equation}
\item[(v)] ({\it generalized relative smoothness})
\begin{equation}\label{VIeq20}
\psi_{\delta}(x,y)\leq\psi_{\delta}(x,z)+\psi_{\delta}(z,y)+ LV[z](x)+ LV[y](z)+\delta.
\end{equation}
\end{enumerate}
\end{definition}

\begin{example}
For some operator $g:Q\rightarrow\mathbb{R}^n$ and a convex function $h:Q\rightarrow\mathbb{R}^n$, the choice
\begin{equation}\label{eq17}
\psi(x,y)=\langle g(y),x-y\rangle+h(x)-h(y)
\end{equation}
leads to a {\it mixed variational inequality} \cite{Konnov_2017,Bao_Khanh}
\begin{equation}\label{eq18}
\langle g(y),y-x\rangle+h(y)-h(x)\leq 0,
\end{equation}
which in the case of the monotonicity of the operator $g$ implies
\begin{equation}\label{eq19}
\langle g(x),y-x\rangle+h(y)-h(x)\leq 0.
\end{equation}
\end{example}

\begin{remark}
\label{BregmanVI}
Similarly to Definition~\ref{defRelStronglyConvex} above, in general case, we do not need the (1-SC) assumption for $V[y](x)$ in Definition \ref{Def_Model_VI}.
In some situations we make (1-SC) assumption for $V[y](x)$ (see Example \ref{UMP_Example} and Section~\ref{UMPStrongApp}).
\end{remark}

Note that for $\delta=0$ the following analogue of \eqref{VIeq20} for some fixed $a, b > 0$
\begin{equation}\label{eq200}
\psi(x,y)\leq\psi(x,z)+\psi(z,y)+ a\|z - y\|^2 + b\|x - z\|^2 \quad  \forall x, y, z \in Q
\end{equation}
was introduced in \cite{Mastroeni}. Condition \eqref{eq200} is used in many works on equilibrium programming. Our approach allows us to work with non-Euclidean set-up without (1-SC) assumption and inexactness $\delta$, which is important for the framework of universal methods \cite{nesterov2015universal} (see Example \ref{UMP_Example} below).

One can directly verify that if $\psi_{\delta}(x,y)$ is $(\delta/3, L, 0, 0, V)$-model of the function $f$ at a given point $y$ then $\psi_{\delta}(x,y)$ is $(\delta, L, V)$-model in the sense of Definition \ref{Def_Model_VI}.

Let us consider some examples.
\begin{example}\label{Examp_Rel_Smooth_VI}
\pd{{\bf Relative smoothness for optimization and VI.}
Let us consider a minimization problem \eqref{eq:Problem} with the function $f$ being smooth, convex and relatively $L$-smooth \rev1{relative} to $d$ \cite{lu2018relatively}, i.e., for all $x,y \in Q$,
\[
f(x) - f(y) - \la\nabla f(y), x-y\ra \leq LV[y](x).
\]
In this case, \eqref{eq:Problem} is equivalent to abstract VI \eqref{eq115} with $\psi_{\delta}(x, y):= \langle \nabla f(y), x - y \rangle$. Properties (i)-(iv) in Definition \ref{Def_Model_VI} obviously hold with $\delta = 0$. Let us check that (v) also holds. Indeed,
\begin{align}
    &\psi_{\delta}(x,y)-\psi_{\delta}(x,z)-\psi_{\delta}(z,y) = \la \nabla f(y),x-y\ra - \la \nabla f(z),x-z\ra - \la \nabla f(y),z-y\ra \notag \\
    &\hspace{2em}= (f(x)-f(z)-\la \nabla f(z),x-z \ra) + (f(z)-f(y) - \la \nabla f(y), z-y\ra) \notag \\
    &\hspace{3em} - (f(x) -f(y) - \la \nabla f(y),x-y\ra) \leq LV[z](x)+ LV[y](z), \notag
\end{align}
where we used relative $L$-smoothness and convexity of $f$. This example shows that our inexact model for VI 
as a particular case contains the concept of relative smoothness introduced in optimization. In this particular case, we say that an operator $g$ is relatively $L$-smooth if
\[
\la g(y)-g(z) ,x - z \ra \leq LV[z](x)+ LV[y](z) \;\; \forall x,y,z \in Q.
\]
}
\end{example}

\revv{
Let us make a precise definition
\begin{definition}
We say that an operator $g(x)$ is relatively smooth relative to function $d(x)$ if for the corresponding Bregman divergence $V[y](x)$ and a constant $L \geq 0$ it holds 
\begin{equation}
    \label{eq:oper_rel_smooth_def}
    \la g(y)-g(z) ,x - z \ra \leq LV[z](x)+ LV[y](z) \;\; \forall x,y,z \in Q.
\end{equation}
\end{definition}
This definition (and its inexact version when there is some error $\delta>0$ added to the r.h.s.) contains as particular case several definitions which were introduced in the literature after the initial draft of this paper appeared as a preprint \cite{stonyakin2019inexact}. The definition of relative Lipschitzness, introduced in  \cite{cohen2021relative} and used to provide a simple explanation for accelerated methods, coincides with the above definition  of relatively smooth operator. Two other examples are explained next.
}

{\color{black}
\begin{example}[Bregman continuity and Metric  regularity]\label{Bregman_cont_Example}
Let us analyze the connection of our notion of relative smoothness and the notion of Bregman continuity introduced in \cite{antonakopoulos2019adaptive} and Metric  regularity introduced in \cite{antonakopoulos2020adaptive}. Both papers consider local norm $\|\cdot\|_x$ which is continuous for $x \in Q$, its dual $\|\cdot\|_{x,*}$ defined in a standard way, Bregman function $d$ and its Bregman divergence $V[z](x) = d(x)-d(z)-\la \nabla d(z),x-z\ra$ satisfying for some $K>0$
\begin{equation}
    \label{eq:Bregm_loc_str_conv}
    V[z](x) \geq \frac{K}{2}\|x-z\|_z^2.
\end{equation}

The paper \cite{antonakopoulos2019adaptive} introduces a notion of $\beta$-Bregman continuous operators which satisfy
\begin{equation}
    \label{eq:Bregman_cont}
    \|g(y)-g(z)\|_{z,*}\leq \beta \sqrt{2V[y](z)}.
\end{equation}
Let us show that such operators are relatively smooth relative to $d$.
Indeed, for any $x,y,z \in Q$,
\begin{gather*}
    \la g(y)-g(z) ,x - z \ra \leq \|g(y)-g(z)\|_{z,*} \|x-z\|_z \stackrel{\eqref{eq:Bregman_cont}}{\leq} \beta \sqrt{2V[y](z)} \|x-z\|_z \\
    \stackrel{\eqref{eq:Bregm_loc_str_conv}}{\leq} \beta \sqrt{2V[y](z)} \cdot \sqrt{\frac{2}{K}V[z](x)} \leq \frac{\beta}{\sqrt{K}} (V[z](x)+ V[y](z)),
\end{gather*}
where in the last inequality we used that $ab\leq \frac{1}{2}(a^2+b^2)$ for any $a,b\geq 0$. Thus, $g$ satisfies \eqref{eq:oper_rel_smooth_def} with $L=\beta/\sqrt{K}$ and is relatively smooth relative to $d$.

The paper \cite{antonakopoulos2020adaptive} uses additional assumption that the local metric is regular, i.e., for $\beta > 0$, satisfies 
\begin{equation}
    \label{eq:regular_metric}
    \|v\|_{z,*}\leq \|v\|_{y,*}(1+\beta\|z-y\|_y).
\end{equation}
The following two notions are also introduced by saying that the operator $g$ is
\begin{itemize}
    \item Metrically bounded if there exists some $M_0 >0$ such that 
    \[
    \|g(z)\|_{z,*} \leq M_0, \quad \forall z \in Q.
    \]
    \item Metrically smooth if there exists some $M_1 >0$ such that
    \[
    \|g(y)-g(z)\|_{z,*} \leq M_1 \|y-z\|_{y}, \quad \forall y,z \in Q.
    \]
\end{itemize}
Let us show that in these two cases $g$ is inexactly relatively smooth relative to $d$.
Let $g$ be metrically bounded. Then
\begin{align}
    \label{eq:metr_bounded_bound_var} 
    &\|g(y)-g(z)\|_{z,*} \leq \|g(y)\|_{z,*}+\|g(z)\|_{z,*} \notag \\ &\stackrel{\eqref{eq:regular_metric}}{\leq} \|g(y)\|_{y,*}(1+\beta\|z-y\|_y) +\|g(z)\|_{z,*} 
    \leq 2M_0 + \beta M_0 \|z-y\|_y.
\end{align}
Further, for any $\delta >0$,
\begin{gather*}
    \la g(y)-g(z) ,x - z \ra \leq \|g(y)-g(z)\|_{z,*} \|x-z\|_z \stackrel{\eqref{eq:metr_bounded_bound_var}}{\leq} 
    (2M_0 + \beta M_0 \|z-y\|_y)\|x-z\|_z \\
    =2M_0\|z-y\|_y^0\|x-z\|_z +\beta M_0 \|z-y\|_y\|x-z\|_z \\
    \stackrel{*}{\leq}  \frac{1}{\delta} \frac{M_0^2}{2}(\|z-y\|_y^2+\|x-z\|_z^2) + \frac{\delta}{2} + \frac{\beta M_0}{2}(\|z-y\|_y^2+\|x-z\|_z^2) \\
    \stackrel{\eqref{eq:Bregm_loc_str_conv}}{\leq} \left(\frac{M_0^2}{2\delta} + \frac{\beta M_0}{2}  \right) \frac{2}{K} (V[z](x)+ V[y](z)) + \frac{\delta}{2},
\end{gather*}
where $*$ uses that, for any $a,b,c \geq 0$, any $\delta >0$, any $\nu \in [0,1]$ (and in particular for $\nu=0$), it holds that $ab^{\nu}c \leq \left(\frac{1}{\delta}\right)^{\frac{1-\nu}{1+\nu}} \frac{a^{\frac{2}{1+\nu}}}{2} \left(b^2+c^2\right) + \frac{\delta}{2}$, see \cite{stonyakin2018generalized}[Lemma 1]. Thus, we obtain that, for any $\delta >0$,
\[
\la g(y)-g(z) ,x - z \ra \leq  L(\delta)(V[z](x)+ V[y](z)) + \frac{\delta}{2}
\]
with 
\begin{equation}
\label{eq:metr_bounded_bound_var_L}
L(\delta) = \left(\frac{M_0}{\delta} + \beta  \right) \frac{M_0}{K},
\end{equation}
i.e. \eqref{eq:oper_rel_smooth_def} holds with any error $\delta>0$ and $g$ is inexactly relatively smooth relative to $d$. Note that this is also covered by our concept of inexact model.

Let $g$ be metrically smooth. Then, from \eqref{eq:Bregm_loc_str_conv} it is straightforward that 
\[
\|g(y)-g(z)\|_{z,*} \leq M_1 \|y-z\|_{y} \leq M_1 \sqrt{\frac{2}{K}V[y](z)}
\]
and, thus, $g$ is $M_1/\sqrt{K}$-Bregman continuous, whence, as it was shown above, $g$ is also relatively smooth relative to $d$ with $L=M_1/K$. 

Thus, we conclude that our algorithms for variational inequalities are also applicable for solving variational inequalities with Bregman continuous, metrically bounded and metrically smooth operators. After presenting our method and its convergence rate we discuss their convergence rate for these types of operators. Moreover, the methods which we develop  in this paper  for variational inequalities and saddle-point problems are applicable to such problems in the presence of inexact information, which is a more general setting than in \cite{antonakopoulos2019adaptive,antonakopoulos2020adaptive}.
\end{example}

\begin{example}[Resource sharing problem]\label{L_condition_Example}
An interesting example of a problem in which a variational inequality with a relatively smooth and monotone operator naturally arises is a resource sharing problem  considered in \cite{antonakopoulos2019adaptive} (Example 2.3). 
In this example, $g(x) = \left(l_1(x_1), \ldots, l_n(x_n)\right)$ with a loss function $l_i(x) = \frac{1}{\alpha_i -  x_i}$, $i=1,...,n$
where  $x$ belongs to the set of feasible resource allocations $Q = \{x=(x_1, \ldots, x_n): 0 \leq x_i < \alpha_i, \; x_1 + \ldots +x_n = R\}$ for $R> 0$, and $\alpha_i$ denotes the capacity of the resource $i$. This operator has singularities corresponding to $\alpha_i$. Let $d(x) = \sum_{i =1}^{n}\frac{1}{1-x_i}$ be the prox-function defined on the feasible set $Q$. Then the associated Bregman divergence is
$$
V{[y]}(x) = \sum_{i= 1}^{n} \frac{(x_i - y_i)^2}{(1-x_i)(1-y_i)^2}.
$$
In \cite{antonakopoulos2019adaptive} the authors show that the operator $g$ is Bregman continuous.
Since in the previous example we showed that such operators are relatively smooth, we conclude that our algorithms are also suitable for the resource sharing problem. Experiments on this problem can be found in Appendix \ref{app_RSP}.

\end{example}
}

\begin{example}\label{UMP_Example} {\bf Variational Inequalities with monotone H\"older continuous operator.}
\label{example_universal_g}
\pd{Assume that $V$ satisfies (1-SC) condition w.r.t. some norm $\|\cdot\|$} and for a monotone operator $g$ there exists $\nu\in[0,1]$ such that
\begin{equation}\label{Hold_cont_g}
\norm{g(x) - g(y)}_* \leq L_{\nu}\norm{x -  y}^\nu \,\,\,\forall x,y \in Q.
\end{equation}
Then, we have $
\langle g(z)-g(y), z-x\rangle\leq
\|g(z) - g(y) \|_* \|z-x\| \leq L_{\nu}\|z-y\|^{\nu}  \|z-x\|
$
\begin{equation}\label{Hold_interpol}
    \leq \frac{L(\delta)}{2}||z-x||^2+\frac{L(\delta)}{2}||z-y||^2+\delta  \leq L(\delta)V[z](x)+ L(\delta)V[y](z) +\delta
\end{equation}
with
\begin{equation}\label{UMP_constant} L(\delta)  = \left(\frac{1}{2\delta}\right)^\frac{1-\nu}{1+\nu} L_{\nu}^{\frac{2}{1+\nu}}
\end{equation}
with arbitrary $\delta > 0$.  In this case $\psi_{\delta}(x, y):= \langle g(y), x - y \rangle$
is a $(\delta, L,V)$-model.
\end{example}

Note that for the previous two examples in Algorithm \ref{Alg:UMPModel} and Theorem~\ref{thmm1inexact} we need $V[z](x)$ to satisfy (1-SC) condition.

Next, we introduce our adaptive method (Algorithm \ref{Alg:UMPModel}) for abstract variational inequalities with inexact $(\delta, L, V)$-model.
This method adapts to the local values of $L$ and allows us to construct a universal method for variational inequalities by applying it to VI with H\"older \rev1{condition \eqref{Hold_cont_g}}
for $\delta = \frac{\varepsilon}{2}$ and $L = L\left(\frac{\varepsilon}{2}\right)$.
\begin{algorithm}[ht]
\caption{Generalized Mirror Prox for VI}
\label{Alg:UMPModel}
\begin{algorithmic}[1]
   \REQUIRE accuracy $\e > 0$, oracle error $\delta >0$,
   initial guess $L_{0} >0$,
   prox set-up: $d(x)$, $V[z] (x)$.
   \STATE Set $k=0$, $z_0 = \arg \min_{u \in Q} d(u)$.
   \REPEAT
%
				\STATE Find the smallest integer $i_k \geq 0$ such that
				\begin{equation}\label{eqUMP23}
                \begin{split}
                \hspace{-3em}\psi_{\delta}(z_{k+1}, z_{k})\leq \psi_{\delta}(z_{k+1}, w_{k})+\psi_{\delta}(w_k,z_k)+ L_{k+1}(V[z_k](w_k) + V[w_k](z_{k+1})) + \delta,
                \end{split}
                \end{equation}
			where 	$L_{k+1}=2^{i_k-1}L_{k}$ and
			\begin{align}
			    w_k&={\argmin_{x \in Q}}^{\widetilde{\delta}} \left\{\psi_{\delta}(x, z_k)+ L_{k+1}V[z_k](x) \right\}.
		\label{eq:UMPwStepMod} \\
				z_{k+1}&={\argmin_{x \in Q}}^{\widetilde{\delta}} \left\{\psi_{\delta}(x, w_k) + L_{k+1}V[z_k](x) 		 \right\}. \label{eq:UMPzStepMod}
			\end{align}
	\UNTIL
	\begin{equation}\label{eq_Alg3}
	S_N:= \sum_{k=0}^{N-1}\frac{1}{L_{k+1}}\geqslant \frac{\max\limits_{x \in Q}V[x^0](x)}{\varepsilon}.
	\end{equation}
	\ENSURE $\widehat{w}_N = \frac{1}{\sum_{k=0}^{N-1}\frac{1}{L_{k+1}}}\sum_{k=0}^{N-1}\frac{1}{L_{k+1}}w_k$.
\end{algorithmic}
\end{algorithm}

\pd{Next, we state the convergence rate result for the proposed method}.
\begin{theorem}\label{thmm1inexact}
For Algorithm \ref{Alg:UMPModel} the following inequality holds
\begin{equation}
- \frac{1}{S_N}\sum_{k=0}^{N-1}\frac{\psi_{\delta}(x,w_{k})}{L_{k+1}} \leq \frac{V[z_0](x)}{S_N} + \delta + 2\widetilde{\delta} \quad \forall x \in Q.
\end{equation}
Moreover,
\begin{equation*}
\max\limits_{u \in Q}\psi(\widehat{w}_N,u)
\leq \frac{2L \max_{u\in Q}V[z_0](u)}{N} + 3\delta + 2\widetilde{\delta},
\end{equation*}
and Algorithm \ref{Alg:UMPModel} stops in no more than
\begin{equation}\label{est_th4.5}
\left\lceil\frac{2L \max_{u\in Q}V[z_0](u)}{\varepsilon}\right\rceil
\end{equation}
iterations.
\end{theorem}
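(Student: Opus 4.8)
The plan is to carry out a standard Mirror-Prox / extra-gradient telescoping argument, using the optimality conditions of the two proximal subproblems \eqref{eq:UMPwStepMod} and \eqref{eq:UMPzStepMod} together with the line-search guarantee \eqref{eqUMP23} and the abstract $\delta$-monotonicity \eqref{eq:abstr_monot_delta}. Both subproblems minimize a function that is convex in its first argument ($\psi_{\delta}$ is convex there) plus a Bregman term $L_{k+1}V[z_k](\cdot)$, so Lemma \ref{lemma:str_conv} applies verbatim with $m=0$ and $\beta = L_{k+1}$. Applied to \eqref{eq:UMPzStepMod} (with $y=z_{k+1}$, $z=z_k$, $\psi=\psi_{\delta}(\cdot,w_k)$) it gives, for every $x\in Q$,
\[
\psi_{\delta}(z_{k+1},w_k) \leq \psi_{\delta}(x,w_k) + L_{k+1}\bigl(V[z_k](x) - V[z_k](z_{k+1}) - V[z_{k+1}](x)\bigr) + \widetilde{\delta},
\]
and applied to \eqref{eq:UMPwStepMod} at the particular point $x=z_{k+1}$ it bounds $\psi_{\delta}(w_k,z_k)$ from above by $\psi_{\delta}(z_{k+1},z_k) + L_{k+1}(V[z_k](z_{k+1}) - V[z_k](w_k) - V[w_k](z_{k+1})) + \widetilde{\delta}$.

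The crux step is then to isolate $-\psi_{\delta}(x,w_k)$ from the first inequality, to bound the appearing term $-\psi_{\delta}(z_{k+1},w_k)$ from above using the line-search inequality \eqref{eqUMP23}, and to substitute the bound on $\psi_{\delta}(w_k,z_k)$. I expect the main obstacle, or at least the most delicate bookkeeping, to be exactly here: all the ``cross'' Bregman terms $V[z_k](w_k)$, $V[w_k](z_{k+1})$ and $V[z_k](z_{k+1})$ introduced by the line search must cancel against those produced by the two optimality conditions, and the two copies of $\psi_{\delta}(z_{k+1},z_k)$ must cancel as well, leaving only the clean telescoping estimate
\[
-\frac{\psi_{\delta}(x,w_k)}{L_{k+1}} \leq V[z_k](x) - V[z_{k+1}](x) + \frac{\delta + 2\widetilde{\delta}}{L_{k+1}}.
\]
Summing for $k=0,\dots,N-1$, discarding $V[z_N](x)\geq 0$, and dividing by $S_N$ yields the first displayed inequality of the theorem.

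For the second claim I would first use that $\psi_{\delta}$ is convex in its first argument, so Jensen applied to the weights $1/(L_{k+1}S_N)$ defining $\widehat{w}_N$ gives $\psi_{\delta}(\widehat{w}_N,u) \leq \frac{1}{S_N}\sum_k \psi_{\delta}(w_k,u)/L_{k+1}$. Applying abstract $\delta$-monotonicity \eqref{eq:abstr_monot_delta} termwise replaces each $\psi_{\delta}(w_k,u)$ by $-\psi_{\delta}(u,w_k)+\delta$, turning the right-hand side into $-\frac{1}{S_N}\sum_k \psi_{\delta}(u,w_k)/L_{k+1} + \delta$, which is controlled by the first inequality taken at $x=u$. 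Finally property (i) of Definition \ref{Def_Model_VI} gives $\psi(\widehat{w}_N,u)\leq \psi_{\delta}(\widehat{w}_N,u)+\delta$; collecting the $\delta$ terms produces the bound with $3\delta + 2\widetilde{\delta}$ after taking the maximum over $u\in Q$.

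The remaining rate and stopping estimates follow from adaptivity of the step sizes. Taking $x=z_{k+1}$, $y=z_k$, $z=w_k$ in the generalized relative smoothness property \eqref{VIeq20} shows that the line-search test \eqref{eqUMP23} is satisfied as soon as $L_{k+1}\geq L$; since $L_{k+1}$ is obtained by successive doubling, it never exceeds $2L$, whence $S_N = \sum_{k=0}^{N-1} 1/L_{k+1} \geq N/(2L)$. Substituting $1/S_N \leq 2L/N$ into the bound on $\max_u \psi(\widehat{w}_N,u)$ gives the stated $\frac{2L\max_u V[z_0](u)}{N} + 3\delta + 2\widetilde{\delta}$ rate, and the stopping rule \eqref{eq_Alg3} combined with $S_N \geq N/(2L)$ guarantees termination within $\lceil 2L\max_u V[z_0](u)/\varepsilon\rceil$ iterations.
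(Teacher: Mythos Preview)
Your proposal is correct and follows essentially the same route as the paper's proof: both apply Lemma~\ref{lemma:str_conv} (with $m=0$, $\beta=L_{k+1}$) to the two proximal steps \eqref{eq:UMPwStepMod}--\eqref{eq:UMPzStepMod}, combine the resulting three-point inequalities with the line-search condition \eqref{eqUMP23} so that all cross Bregman terms and the two copies of $\psi_{\delta}(z_{k+1},z_k)$ cancel, and telescope to obtain the first bound; the second bound is then derived via convexity in the first argument, abstract $\delta$-monotonicity \eqref{eq:abstr_monot_delta}, property~(i), and the adaptivity bound $L_{k+1}\le 2L$ (which, as in the paper, implicitly uses $L_0\le 2L$). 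Your write-up is in fact slightly more explicit than the paper's in spelling out the Jensen/monotonicity step behind \eqref{eq30}.
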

\revv{Before we prove the theorem, we note that similarly to \cite{antonakopoulos2020adaptive}, we can consider any nonempty bounded convex subset $C$ of the set $Q$ and have similar guarantees for $\max\limits_{u \in C}\psi(\widehat{w}_N,u)$ that plays a role of generalized restricted gap (or merit) function. Thus, there is no need for the set $Q$ to be bounded.}
\begin{proof}
After $(k+1)$-th iteration ($k=0,1,2\ldots$) from \eqref{eq:UMPwStepMod} and \eqref{eq:UMPzStepMod} we have, for each $u \in Q$,
$$ \psi_{\delta}(w_k, z_k)\leq\psi_{\delta}(u, z_k) +L_{k+1}V[z_k](u)-L_{k+1}V[w_k](u)- L_{k+1}V[z_k](w_k) + \widetilde{\delta}$$
and
$$\psi_{\delta}(z_{k+1}, w_k) \leq \psi_{\delta}(u, w_k)+L_{k+1}V[z_k](u)-L_{k+1}V[z_{k+1}](u)-L_{k+1}V[z_k](z_{k+1}) + \widetilde{\delta}.$$
The first inequality means that
$$
\psi_{\delta}(w_k, z_k)\leq\psi_{\delta}(z_{k+1}, z_k) +L_{k+1}V[z_k](z_{k+1})-L_{k+1}V[w_k](z_{k+1})- L_{k+1}V[z_k](w_k) + \widetilde{\delta}.
$$
Taking into account \eqref{eqUMP23}, we obtain, for all $u \in Q$,
$$
-\psi_{\delta}(u, w_k) \leq L_{k+1}V[z_k](u)-L_{k+1}V[z_{k+1}](u) + \delta + 2\widetilde{\delta}.
$$
So, the following inequality holds:
$$-\sum_{k=0}^{N-1} \frac{\psi_{\delta}(u,w_{k})}{L_{k+1}} \leq V[z_0](u) - V[z_N](u) + S_N (\delta + 2\widetilde{\delta}).$$
By virtue of \eqref{VIeq20} and the choice of $L_{0}\leq 2L$, it is guaranteed that $L_{k+1}\leq 2L\;\;\forall k=\overline{0,N-1}$
and we have from Definition \ref{Def_Model_VI}
\begin{align}\label{eq30}
& \max\limits_{u \in Q} \psi(\widehat{w}_N, u) \leq  \max\limits_{u \in Q} \psi_{\delta}(\widehat{w}_N, u)+\delta \\
&\leq
-\frac{1}{S_N}\sum_{k=0}^{N-1}\frac{\psi_{\delta}(u,w_{k})}{L_{k+1}} + 2\delta \leq \frac{2L \max_{u\in Q}V[z_0](u)}{N} + 3\delta + 2\widetilde{\delta}. \notag
\end{align}
\end{proof}

\begin{remark}[Bregman continuity and Metric  regularity]
\revv{Let us consider the setting of Example \ref{Bregman_cont_Example}. In all the cases we define $\psi_{\delta}(w,u)=\la g(u),w-u\ra$. First, if the operator $g$ is $\beta$-Bregman continuous, then, as we show in Example \ref{Bregman_cont_Example}, it is relatively smooth relative to $d$ with constant $L=\beta/\sqrt{K}$. Applying Theorem \ref{thmm1inexact} with $\delta=\widetilde{\delta}=0$, we obtain that
\[
\max\limits_{u \in Q} \la g(u),\widehat{w}_N-u\ra \leq \frac{2\beta \max_{u\in Q}V[z_0](u)}{N\sqrt{K}},
\]
which has the same dependence on $N$ as the rate in \cite{antonakopoulos2019adaptive}. 
}

\revv{
Let us now consider $M_0$-metrically bounded operator $g$. As it was shown in Example \ref{Bregman_cont_Example}, this operator is inexactly relatively smooth, which means that $\psi_{\delta}(w,u)=\la g(u),w-u\ra$ is an inexact model with $L(\delta)=\left(\frac{M_0}{\delta} + \beta  \right) \frac{M_0}{K}$ defined in \eqref{eq:metr_bounded_bound_var_L}. In this setting, we have $\widetilde{\delta}=0$ and set $\delta=\varepsilon/6$. Applying Theorem \ref{thmm1inexact}, we obtain 
\[
\max\limits_{u \in Q} \la g(u),\widehat{w}_N-u\ra \leq \frac{2\max_{u\in Q}V[z_0](u)}{N}\left(\frac{6M_0}{\varepsilon} + \beta  \right) \frac{M_0}{K} + \frac{\varepsilon}{2}.
\]
Thus, to obtain an $\varepsilon$-solution, i.e. guarantee $\max\limits_{u \in Q} \la g(u),\widehat{w}_N-u\ra \leq \varepsilon$, it is sufficient to take $N\geq \Omega\left( \frac{M_0^2\max_{u\in Q}V[z_0](u)}{K\varepsilon^2}\right)$, which has the same dependence on $\varepsilon$ as the bound in \cite{antonakopoulos2020adaptive}.
}

\revv{
Finally, we consider $M_1$-metrically smooth operator $g$. According to Example \ref{Bregman_cont_Example}, this operator is also relatively smooth with $L=M_1/K$.
Applying Theorem \ref{thmm1inexact}, we obtain 
\[
\max\limits_{u \in Q} \la g(u),\widehat{w}_N-u\ra \leq \frac{2M_1 \max_{u\in Q}V[z_0](u)}{NK}.
\]
A very close convergence rate is obtained in \cite{antonakopoulos2020adaptive}, yet without explicit dependence on $M_1$ and $K$.
}

\revv{
Note that in all three examples the same rate holds for $\max\limits_{u \in C} \la g(u),\widehat{w}_N-u\ra$ where $C$ is any non-empty bounded convex subset of $Q$. Thus, there is no need for the set $Q$ to be bounded.
}

\end{remark}

\begin{remark}
\rev1{In the setting of Example \ref{UMP_Example}, we set, for any desired accuracy $\varepsilon >0$,  $\delta = \frac{\varepsilon}{2}$ and $L = L\left(\frac{\varepsilon}{2}\right)$ according to \eqref{UMP_constant}. Then from \eqref{est_th4.5}, we obtain that Algorithm \ref{Alg:UMPModel} has the following complexity to guarantee} 
\begin{equation}\label{est_rem8}
\left\lceil 2 \inf_{\nu\in[0,1]}\left(\frac{2L_{\nu}}{\e} \right)^{\frac{2}{1+\nu}} \cdot \max_{u\in Q}V[z_0](u)\right\rceil.
\end{equation}
Note that estimate \eqref{est_rem8} is optimal for variational inequalities and saddle-point problems in the cases $\nu = 0$ and $\nu = 1$.
\end{remark}

Thus, the introduced concept of the $(\delta, L, V)$-model for variational inequalities allows us to extend the previously proposed universal method for VI \dd{to} a wider class of problems, including {\it mixed variational inequalities} \cite{Konnov_2017,Bao_Khanh} and {\it composite saddle-point problems} \cite{chambolle2011first-order}.

\begin{remark}\label{remark_comparision}
The authors of \cite{antonakopoulos2019adaptive} propose an adaptive Mirror-Prox algorithm for variational inequalities with Bregman continuous operator, i.e. satisfying inequality similar to \eqref{eq:Bregman_cont}. As we show above in Example \ref{L_condition_Example}, this setting is covered by our concept of relative smoothness for operators and, thus, also by our notion of inexact model for variational inequalities. At the same time their algorithm has an important difference with our algorithm in terms of the choice of the stepsize. In our Algorithm \ref{Alg:UMPModel}, the role of the stepsize  is played by $1/L_{k+1}$, and it may happen, since $L_{k+1}=2^{i_k-1}L_k$ with $i_k\geq 0$, that the stepsize is increasing between iterations leading to faster convergence in practice. On the contrary, the stepsize in \cite{antonakopoulos2019adaptive} is non-increasing. We illustrate the importance of the possibility to increase the stepsize in computational experiments in Appendix \ref{app_numerical_1} for non-smooth saddle-point problems by comparing our method with $i_k\geq 0$ and with $i_k\geq 1$. 
The same remark holds also for a very recent preprint \cite{antonakopoulos2020adaptive}, where an adaptive Mirror-Prox algorithm  for metrically regular problems was proposed with optimal bounds for metrically bounded and metrically smooth operators. This method also uses decreasing stepsizes. Finally, both papers \cite{antonakopoulos2019adaptive,antonakopoulos2020adaptive} do not consider any inexactness for their adaptive methods. Only for the non-adaptive method in \cite{antonakopoulos2019adaptive} they prove convergence rate in the setting of stochastic errors in the operator values, the setting we do not consider here.
\end{remark}

Now we introduce an inexact model for saddle-point problems. The solution of variational inequalities reduces the so-called saddle-point problems, in which for a convex in $u$ and concave in $v$ functional $f(u,v):\mathbb{R}^{n_1+n_2}\rightarrow\mathbb{R}$ ($u\in Q_1\subset\mathbb{R}^{n_1}$ and $v\in Q_2\subset\mathbb{R}^{n_2}$) needs to be found the point $(u_*, v_*)$ such that:
\begin{equation}\label{eq31}
f(u_*,v)\leq f(u_*,v_*)\leq f(u,v_*)
\end{equation}
for arbitrary $u\in Q_1$ and $v\in Q_2$. Let $Q=Q_1\times Q_2\subset\mathbb{R}^{n_1+n_2}$. For $x=(u,v)\in Q$, we assume that $||x||=\sqrt{||u||_1^2+||v||_2^2}$ ($||\cdot||_1$ and $||\cdot||_2$ are the norms in the spaces $\mathbb{R}^{n_1}$ and $\mathbb{R}^{n_2}$, respectively). We agree to denote $x=(u_x,v_x),\;y=(u_y,v_y)\in Q$.

It is well known that for a sufficiently smooth function $f$ with respect to $u$ and $v$ the problem \eqref{eq31} reduces to VI with an operator $
g(x)= (f_u'(u_x,v_x), \; - f_v'(u_x,v_x))$.

For saddle-point problems we propose some adaptation of the concept of the $(\delta, L, V)$-model for abstract variational inequality.

\begin{definition}\label{DefSaddleModel}
We say that the function $\psi_{\delta}(x,y)$ $(\psi_{\delta}:\mathbb{R}^{n_1+n_2}\times\mathbb{R}^{n_1 + n_2}\rightarrow\mathbb{R})$ is a $(\delta,L,V)$-model for the saddle-point problem \eqref{eq31} if the conditions (ii) -- (v) of Definition \ref{Def_Model_VI} hold and in addition
\begin{equation}\label{eq33}
f(u_y,v_x)-f(u_x,v_y)\leq-\psi_{\delta}(x,y) + \delta \quad \forall x, y \in Q.
\end{equation}
\end{definition}

\begin{example}
The proposed concept of the $(\delta,L,V)$-model for saddle-point problems is quite applicable, for example, for composite saddle-point problems of the form considered in the popular paper \cite{chambolle2011first-order}:
\begin{equation}\label{eq34}
f(u,v)=\tilde{f}(u,v)+h(u)-\varphi(v)
\end{equation}
for some convex in $u$ and concave in $v$ subdifferentiable functions $\tilde{f}$, as well as convex functions $h$ and $\varphi$. In this case, we can put
\begin{equation}\label{eq35}
\psi_{\delta}(x,y)=\langle\tilde{g}(y),x-y\rangle+h(u_x)+\varphi(v_x)-h(u_y)-\varphi(v_y),
\end{equation}
where\rev1{
$$
\tilde{g}(y)=
\begin{pmatrix}
\tilde{g}_u(u_y,v_y)\\
-\tilde{g}_v(u_y,v_y)
\end{pmatrix}.
$$
for some $\tilde{g}_u \in \partial_u f(u_y,v_y)$ and $\tilde{g}_v \in - \partial_v f(u_y,v_y)$.}
\end{example}

Theorem \ref{thmm1inexact} implies
\begin{theorem}
If for the saddle-point problem \eqref{eq31} there is a $(\delta,L,V)$-model $\psi_{\delta}(x,y)$, then after stopping the algorithm we get a point
\begin{equation}\label{eq36}
\widehat{y}_N=(u_{\widehat{y}_N},v_{\widehat{y}_N}):=(\widehat{u}_N, \widehat{v}_N):=\frac{1}{S_N}\sum_{k=0}^{N-1}\frac{y_{k}}{L_{k+1}},
\end{equation}
for which the following inequality is true:
\begin{equation}\label{eq37}
\max_{v\in Q_2}f(\widehat{u}_N, v)-\min_{u\in Q_1}f(u, \widehat{v}_N)\leq \frac{2L \max_{(u, v) \in Q} V[u_0, v_0](u, v)}{N} +2\tilde{\delta}+2\delta.
\end{equation}
\end{theorem}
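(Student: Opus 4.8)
The plan is to reduce the saddle-point duality gap to the already-established merit-function bound of Theorem~\ref{thmm1inexact}, using the convex-concave structure of $f$ together with the defining inequality \eqref{eq33} of the saddle-point model. Throughout I identify the averaged iterate $\widehat{y}_N$ of \eqref{eq36} with the output $\widehat{w}_N$ of Algorithm~\ref{Alg:UMPModel}, i.e. $y_k = w_k = (u_{w_k}, v_{w_k})$, so that $\widehat{u}_N = \frac{1}{S_N}\sum_{k=0}^{N-1}\frac{u_{w_k}}{L_{k+1}}$ and $\widehat{v}_N = \frac{1}{S_N}\sum_{k=0}^{N-1}\frac{v_{w_k}}{L_{k+1}}$.

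First I would rewrite the left-hand side of \eqref{eq37} as a single maximum,
$$\max_{v\in Q_2}f(\widehat{u}_N,v)-\min_{u\in Q_1}f(u,\widehat{v}_N) = \max_{(u,v)\in Q}\left[f(\widehat{u}_N,v)-f(u,\widehat{v}_N)\right],$$
and fix an arbitrary $x=(u,v)\in Q$. The crucial point is that $\widehat{y}_N$ enters $f$ only through the averaged first slot $\widehat{u}_N$ and the averaged second slot $\widehat{v}_N$, so I can apply Jensen's inequality via convexity of $f(\cdot,v)$ in $u$ and concavity of $f(u,\cdot)$ in $v$ to pull the weighted averaging outside:
$$f(\widehat{u}_N,v)-f(u,\widehat{v}_N) \leq \frac{1}{S_N}\sum_{k=0}^{N-1}\frac{1}{L_{k+1}}\left(f(u_{w_k},v)-f(u,v_{w_k})\right).$$

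Next I would bound each summand by \eqref{eq33} applied at the pair $x=(u,v)$ and $y=w_k$, which gives $f(u_{w_k},v_x)-f(u_x,v_{w_k}) \leq -\psi_{\delta}(x,w_k)+\delta$; summing yields
$$f(\widehat{u}_N,v)-f(u,\widehat{v}_N) \leq -\frac{1}{S_N}\sum_{k=0}^{N-1}\frac{\psi_{\delta}(x,w_k)}{L_{k+1}} + \delta.$$
Here the orientation is exactly the one controlled by the first inequality of Theorem~\ref{thmm1inexact} (free first argument, iterates $w_k$ in the second slot), so I substitute that estimate to obtain $f(\widehat{u}_N,v)-f(u,\widehat{v}_N) \leq \frac{V[z_0](x)}{S_N} + 2\delta + 2\widetilde{\delta}$. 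Taking the maximum over $x=(u,v)\in Q$ and using $S_N\geq N/(2L)$ (guaranteed, as in Theorem~\ref{thmm1inexact}, by $L_{k+1}\leq 2L$) turns $\frac{\max_x V[z_0](x)}{S_N}$ into $\frac{2L\max_{(u,v)\in Q}V[u_0,v_0](u,v)}{N}$, which is precisely \eqref{eq37}.

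The step I expect to require the most care is the orientation of the arguments: a naive application of \eqref{eq33} directly at the averaged point $y=\widehat{y}_N$ would leave the term $-\psi_{\delta}(x,\widehat{y}_N)$ with $\widehat{y}_N$ in the second slot, where Definition~\ref{Def_Model_VI} grants no convexity and Theorem~\ref{thmm1inexact} offers no control. The resolution is to perform the Jensen averaging on $f$ \emph{first}, invoking \eqref{eq33} per iterate so that the $w_k$ land in the second argument and the free point $x$ in the first, matching the form of the merit-function bound already proved. The remaining manipulations, namely the single-maximum reformulation and the $S_N\geq N/(2L)$ substitution, are routine.
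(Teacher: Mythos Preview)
Your proposal is correct and follows exactly the route the paper has in mind: the paper's own ``proof'' consists only of the sentence ``Theorem~\ref{thmm1inexact} implies'', and you have spelled out precisely how --- Jensen on $f$ via convexity/concavity, then \eqref{eq33} per iterate to land $w_k$ in the second slot of $\psi_\delta$, then the first inequality of Theorem~\ref{thmm1inexact}, then $S_N\geq N/(2L)$. Your careful accounting of the $\delta$'s (one from \eqref{eq33}, one plus $2\widetilde{\delta}$ from Theorem~\ref{thmm1inexact}) correctly reproduces the $2\delta+2\widetilde{\delta}$ constant, and your remark about why one must average \emph{before} applying \eqref{eq33} is exactly the point.
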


\section{Inexact Model for Strongly Monotone VI}
\label{UMPStrongApp}

In this section similarly with the concept of ($\delta, L, \mu, m, V$)-model in optimization we  consider an inexact model for VI with a stronger version of the monotonicity condition \eqref{eq:abstr_monot_delta}.
\begin{definition}\label{Def_Model_VI_2}
We say that functional $\psi$ has $(\delta, L, \mu, V)$-model $\psi_{\delta} (x, y)$ at a given point $y$ if the following properties hold for each $x, y, z \in Q$:
\begin{enumerate}
\item[(i)] $\psi(x, y) \leq   \psi_{\delta}(x, y) + \delta$;
\item[(ii)] $\psi_{\delta} (x, y)$ convex in the first variable;
\item[(iii)]
$\psi_{\delta} (x, y)$ continuous in $x$ and $y$;
\item[(iii)] $\psi_{\delta}(x,x)=0$;
\item[(iv)] ({\it $\mu$-strong $\delta$-monotonicity})
\begin{equation}\label{eq:abstr_trong_monot}
\psi_{\delta}(x,y)+\psi_{\delta}(y,x)+\mu \|x-y\|^2\leq \delta;
\end{equation}
\item[(v)] ({\it generalized relative smoothness})
\begin{equation}\label{Strongeq20}
\psi_{\delta}(x,y)\leq\psi_{\delta}(x,z)+\psi_{\delta}(z,y)+ LV[z](x)+ LV[y](z)+\delta
\end{equation}
for some fixed values $L>0$, $\delta>0$.
\end{enumerate}
\end{definition}

\begin{remark}
We note that we can not replace $\|x-y\|^2$
by $V[y](x)$ in \eqref{eq:abstr_trong_monot} since it is essentially used in the proof of Theorem \ref{Th:RUMPCompl}.
\end{remark}

Now we propose a method with a linear rate of convergence for VI with $(\delta, L, \mu, V)$-model. We slightly modify the assumptions on prox-function $d(x)$. Namely, we assume that $\argmin_{x \in Q} d(x) = 0$ and that $d$ is bounded on the unit ball in the chosen norm $\|\cdot\|$, that is
\begin{equation}
d(x) \leq \frac{\Omega}{2}, \quad \forall x\in Q : \|x \| \leq 1,
\label{eq:dUpBound}
\end{equation}
where $\Omega$ is some known constant. Note that for standard proximal setups $\Omega = O(\ln \text{dim}E)$. Finally, we assume that we are given a starting point $x_0 \in Q$ and a number $R_0 >0$ such that $\| x_0 - x_* \|^2 \leq R_0^2$, where $x_*$ is the solution to abstract VI. The procedure of restating of Algorithm \ref{Alg:UMPModel} is applicable for abstract strongly monotone variational inequalities.

\begin{algorithm}
\caption{Restarted Generalized Mirror Prox}
\label{Alg:RUMP}
\begin{algorithmic}[1]
   \REQUIRE accuracy $\e > 0$, $\mu >0$, $\Omega$ s.t. $d(x) \leq \frac{\Omega}{2} \ \forall x\in Q: \|x\| \leq 1$; $x_0, R_0 \ s.t. \|x_0-x_*\|^2 \leq R_0^2.$
      \STATE Set $p=0,d_0(x)=R_0^2d\left(\frac{x-x_0}{R_0}\right)$.
   \REPEAT
			\STATE Set $x_{p+1}$ as the output of Algorithm \ref{Alg:UMPModel} after $N_p$ iterations of Algorithm \ref{Alg:UMPModel} with prox-function $d_{p}(\cdot)$ and stopping criterion $\sum_{k=0}^{N_p-1}\frac{1}{L_{k+1}} \geq \frac{\Omega}{\mu}$.
			\STATE Set $R_{p+1}^2 = R_0^2 \cdot 2^{-(p+1)} + 2(1- 2^{-(p+1)})\frac{\delta + 2\tilde{\delta}}{\mu}$.
			\STATE Set $d_{p+1}(x) \leftarrow R_{p+1}^2 d\left(\frac{x-x_{p+1}}{R_{p+1}}\right)$.
			\STATE Set $p=p+1$.
			\UNTIL $p > \log_2\frac{R_0^2}{\e}$	
		\ENSURE $x_{p+1}$.
\end{algorithmic}
\end{algorithm}

\begin{theorem}
\label{Th:RUMPCompl}
    Assume that $\psi_{\delta}$ is a $(\delta, L, \mu, V)$-model for $\psi$. Also assume that the prox function $d(x)$ satisfies \eqref{eq:dUpBound} and the starting point $x_0 \in Q$ and a number $R_0 >0$ are such that $\| x_0 - x_* \|^2 \leq R_0^2$, where $x_*$ is the solution to \eqref{eq115}. Then, for each $p\geq 0$
		\[
		\|x_p - x_*\|^2 \leq R_0^2\cdot 2^{-p}  +\frac{2\delta}{\mu}+\frac{4\widetilde{\delta}}{\mu} \leq \varepsilon + \frac{2\delta}{\mu}+\frac{4\widetilde{\delta}}{\mu}.
		\]
		 The total number of iterations of the inner Algorithm \ref{Alg:UMPModel} does not exceed
    \begin{equation}\label{eq_abst_strong_monot}
         \left\lceil \frac{2L\Omega}{\mu}\cdot \log_2 \frac{R_0^2}{\e}\right\rceil,
    \end{equation}
where $\Omega$ satisfies \eqref{eq:dUpBound}.
\end{theorem}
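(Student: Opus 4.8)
The plan is to argue by induction on the restart index $p$, carrying the hypothesis $\|x_p-x_*\|^2\le R_p^2$ (true at $p=0$ by assumption). Everything reduces to one claim: a single call of Algorithm~\ref{Alg:UMPModel} inside the restart contracts the squared distance to the solution, $\|x_{p+1}-x_*\|^2\le \tfrac12 R_p^2+\frac{\delta+2\widetilde\delta}{\mu}$, which is precisely the recursion defining $R_{p+1}$ in Algorithm~\ref{Alg:RUMP}; solving this linear recursion (fixed point $2\frac{\delta+2\widetilde\delta}{\mu}$) gives the closed form for $R_p^2$ and hence the first displayed bound, while stopping the outer loop once $p>\log_2(R_0^2/\varepsilon)$ forces the geometric term below $\varepsilon$.

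For the inductive step I would analyze stage $p$, where Algorithm~\ref{Alg:UMPModel} runs with the recentered and rescaled prox-function $d_p(x)=R_p^2 d((x-x_p)/R_p)$, so that $z_0=x_p$ and $V_p[x_p](x)=R_p^2\left(d((x-x_p)/R_p)-d(0)\right)$. Setting $x=x_*$ in the main estimate of Theorem~\ref{thmm1inexact} gives
\[
-\frac{1}{S_{N_p}}\sum_{k=0}^{N_p-1}\frac{\psi_{\delta}(x_*,w_k)}{L_{k+1}}\le \frac{V_p[x_p](x_*)}{S_{N_p}}+\delta+2\widetilde\delta .
\]
The crucial move is to convert the left side into a distance. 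Applying $\mu$-strong $\delta$-monotonicity \eqref{eq:abstr_trong_monot} to the pair $(x_*,w_k)$, together with property~(i) of Definition~\ref{Def_Model_VI_2} and the primal optimality of $x_*$ in \eqref{eq13} (whence $\psi(w_k,x_*)\ge 0$ and thus $\psi_{\delta}(w_k,x_*)\ge-\delta$), yields $-\psi_{\delta}(x_*,w_k)\ge \mu\|x_*-w_k\|^2-2\delta$. Substituting this and invoking Jensen's inequality for the convex function $\|x_*-\cdot\|^2$ at the average $\widehat{w}_{N_p}=\frac{1}{S_{N_p}}\sum_k \frac{w_k}{L_{k+1}}=x_{p+1}$ produces $\mu\|x_{p+1}-x_*\|^2\le \frac{V_p[x_p](x_*)}{S_{N_p}}+O(\delta+\widetilde\delta)$.

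To finish the step I would estimate the two factors on the right. Since $\|x_p-x_*\|\le R_p$ by the inductive hypothesis, the point $(x_*-x_p)/R_p$ lies in the unit ball, so the upper bound \eqref{eq:dUpBound} on $d$ gives $V_p[x_p](x_*)\le \tfrac{\Omega}{2}R_p^2$; and the inner stopping criterion $S_{N_p}\ge \Omega/\mu$ gives $\frac{V_p[x_p](x_*)}{S_{N_p}}\le \tfrac{\mu}{2}R_p^2$. Dividing by $\mu$ then delivers $\|x_{p+1}-x_*\|^2\le \tfrac12 R_p^2+\frac{\delta+2\widetilde\delta}{\mu}=R_{p+1}^2$, closing the induction. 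For the iteration count, the line search keeps $L_{k+1}\le 2L$ (as in Theorem~\ref{thmm1inexact}, using the generalized relative smoothness with parameter $L$), so each inner step adds at least $1/(2L)$ to $S$ and the threshold $\Omega/\mu$ is reached within $\lceil 2L\Omega/\mu\rceil$ iterations; multiplying by the number of restarts $\lceil\log_2(R_0^2/\varepsilon)\rceil$ yields \eqref{eq_abst_strong_monot}.

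The step I expect to be the main obstacle is exactly the conversion of the averaged, $\psi_{\delta}$-based gap inequality into an honest squared-distance contraction in the norm $\|\cdot\|$. This forces one to combine three ingredients---strong $\delta$-monotonicity (which, crucially, is phrased in $\|\cdot\|^2$ and not in $V$, the precise reason the Remark after Definition~\ref{Def_Model_VI_2} insists $V$ cannot be substituted), the primal VI-optimality of $x_*$, and Jensen's inequality---and then to align the resulting error, through the recentering and rescaling of the prox by $R_p$, with the radius recursion. Tracking the $\delta$ and $\widetilde\delta$ contributions so that the additive error is genuinely absorbed into $R_{p+1}^2$, and verifying that $z_0=x_p$ makes $V_p[x_p](x_*)\le\frac{\Omega}{2} R_p^2$, are the points requiring the most care.
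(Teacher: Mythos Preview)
Your proposal is correct and follows essentially the same route as the paper's proof: induction on $p$ with hypothesis $\|x_p-x_*\|^2\le R_p^2$, the gap estimate of Theorem~\ref{thmm1inexact} at $u=x_*$, conversion to squared distance via $\mu$-strong $\delta$-monotonicity, Jensen on $\|\cdot-x_*\|^2$, the bound $V_p[x_p](x_*)\le\frac{\Omega}{2}R_p^2$ from \eqref{eq:dUpBound}, and the inner stopping rule $S_{N_p}\ge\Omega/\mu$ to close the recursion and count iterations. One small point the paper makes explicit that you gloss over: the theorem assumes $x_*$ solves the \emph{weak} VI \eqref{eq115}, so obtaining $\psi(w_k,x_*)\ge 0$ (hence $\psi_\delta(w_k,x_*)\ge-\delta$) requires first arguing, via continuity and abstract monotonicity of $\psi_\delta$, that $x_*$ is also a strong solution in the sense of \eqref{eq13}; you invoked \eqref{eq13} directly.
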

\begin{proof}
We show by induction that for $p \geq 0$
\begin{equation}\label{induction_ineq}
    \|x_p - x_*\|^2 \leq R_0^2\cdot 2^{-p} + 2(1 - 2^{-p})\left(\frac{\delta}{\mu}+\frac{2\widetilde{\delta}}{\mu}\right),
\end{equation}
which leads to the statement of the Theorem.
For $p=0$ this inequality holds by the theorem's assumption. Assuming that \eqref{induction_ineq} holds for some $p\geq 0$, our goal is to prove \eqref{induction_ineq} for $p+1$ considering the outer iteration $p+1$.
Observe that the function $d_{p}(x)$ defined in Algorithm \ref{Alg:RUMP} is 1-strongly convex w.r.t. the norm $\|\cdot\| / R_{p}$.

Using the definition of $d_{p}(\cdot)$ and \eqref{eq:dUpBound}, we have, since $x_p = \argmin_{x \in Q} d_p(x)$
\[
	V_{p}[x_{p}](x_*) = d_{p}(x_{*}) - d_{p}(x_{p}) - \la \nabla d_{p}(x_{p}), x_{*} - x_{p} \ra \leq  d_{p}(x_{*}) \leq \frac{R_p^2\Omega}{2}.
\]
Denote by $$S_{N_p}:= \sum_{k=0}^{N_p-1}\frac{1}{L_{k+1}}.$$

Thus, by Theorem \ref{thmm1inexact}, taking $u = x_*$, we obtain
\[
- \frac{1}{S_{N_p}} \sum_{k=0}^{N_p-1} \frac{\psi_{\delta}(x_*, w_k)}{L_{k+1}} \leq  \frac{R_p^2 V_{p}[x_{p}](x_{*})}{S_{N_p}} + \delta + 2\widetilde{\delta} \leq \frac{\Omega R_p^2}{2S_{N_{p}}} + \delta + 2\widetilde{\delta}.
\]
Since the operator $\psi$ is continuous and abstract monotone, we can assume that the solution to weak VI \eqref{eq13} is also a strong solution and
$- \psi(w_k, x_*) \leq 0$, $k=0,...,N_p-1$
and, by Definition \ref{Def_Model_VI_2} (i),
$-\psi_{\delta}(\omega_k,x_*)\leq\delta$ ($k=0,\ldots,N_p-1$).
This and \eqref{eq:abstr_trong_monot} give, that for each $k=0,...,N_p-1$,
\begin{align}
- \psi_{\delta} (x_*, w_k) &\geq - \delta - \psi_{\delta}(x_*, w_k) - \psi_{\delta}(w_k, x_*) \geq - \delta + \mu\|w_k-x_*\|^2, \notag \\
-\psi_{\delta}(x_*,\omega_k)&\geq-\delta -\psi_{\delta}(x_*,\omega_k)-\psi_{\delta}(\omega_k,x_*)\geq - \delta +\mu\|\omega_k-x_*\|^2. \notag
\end{align}

Thus, by the convexity of the squared norm, we obtain
\begin{align}
- 2\delta + \mu \|x_{p+1}-x_*\|^2 & = - 2 \delta + \mu \left\|\frac{1}{S_{N_p}} \sum_{k=0}^{N_p-1}  \frac{w_k}{L_{k+1}}-x_*\right\|^2 \leq - 2\delta + \frac{\mu}{S_{N_p}} \sum_{k=0}^{N_p-1} \frac{\|w_k-x_*\|^2}{L_{k+1}} \notag \\
&  \leq - 2 \delta - \frac{1}{S_{N_p}} \sum_{k=0}^{N_p-1} \frac{\psi_{\delta}(x_*, w_k)}{L_{k+1}} \leq \frac{\Omega R_p^2}{2 S_{N_p}} - \delta + 2\widetilde{\delta}. \notag
\end{align}
Using the stopping criterion $S_{N_p} \geq \frac{\Omega}{\mu}$  {\color{black} we can put $N_p = \left\lceil \frac{2L\Omega}{\mu}\right\rceil $} and
\begin{align}
    \|x_{p+1}-x_*\|^2 &\leq \frac{R_p^2}{2}  +\frac{\delta+2\widetilde{\delta}}{\mu}= \frac{1}{2}\left(R_0^2 \cdot 2^{-p} + 2(1 - 2^{-p})\frac{\delta+2\widetilde{\delta}}{\mu} \right)  +\frac{\delta+2\widetilde{\delta}}{\mu} \notag \\
    & = R_0^2 \cdot 2^{-(p+1)} + 2(1 - 2^{-(p+1)}) \left(\frac{\delta}{\mu}+\frac{2\widetilde{\delta}}{\mu}\right), \notag
\end{align}
which finishes the proof by induction.
\end{proof}

\begin{remark}
If for some $m>0$ $\psi_{\delta}(x,y)$ is an $m$-strongly convex function in $x$, then for Algorithm \ref{Alg:RUMP} we can prove the following estimate
		\[
		\|x_p - x_*\|^2 \leq R_0^2\cdot 2^{-p}  +\frac{2\delta}{m+\mu}+\frac{4\widetilde{\delta}}{m+\mu} \leq \varepsilon + \frac{2\delta}{m+\mu}+\frac{4\widetilde{\delta}}{m+\mu}
		\]
for each $p\geq 0$
and instead of \eqref{eq_abst_strong_monot} we obtain
\begin{equation}\label{eq_abst_strong_monot_1}
\left\lceil \frac{2L\Omega}{m+\mu}\cdot \log_2 \frac{R_0^2}{\e}\right\rceil.
\end{equation}
\end{remark}

\section{Conclusion}
\label{S:Conclusion}

\pd{In this paper,} we consider convex optimization problem \eqref{eq:Problem}.
It is well known (see \cite{devolder2014first, dvurechensky2017universal,gorbunov2019optimal}) that if \pd{there is an inexact gradient $\nabla_{\delta} f(y)$ of $f$, s.t.}, for all $x,y \in Q$,
\begin{equation}\label{eq:inexact_oracle_DGN}
	f(y) + \la \nabla_{\delta} f(y), x-y \ra  - \delta_1 \le f(x)  \le f(y) + \la \nabla_{\delta} f(y), x-y \ra + \frac{L}{2}\|x-y\|^2_2 + \delta_2,
\end{equation}
then
\pd{the corresponding versions of}
Gradient Method (GM) and Fast Gradient Method (FGM) 
\pd{have the convergence rate}
\begin{equation}
\label{estimate}
f(x_N) - f(x_*) = O\left(\frac{LR^2}{N^p} + \delta_1 + N^{p-1}\delta_2\right),
\end{equation}
where $p=1$ {corresponds to} GM and $p=2$ {corresponds to} FGM, $x_*$ is a solution of \eqref{eq:Problem}\pd{, $R$ is an upper bound for $\|x_0-x_*\|_2$.}
\pd{We show\footnote{For simplicity in the paper we consider the case $\delta_1 = \delta_2 = \delta$, but one can easily rewrite all the results of this paper to obtain \eqref{estimate}. See \cite{gorbunov2019optimal} for details.} that under an appropriate generalization of \eqref{eq:inexact_oracle_DGN} to
$$
f(y) + \psi_{\delta}(x, y)  - \delta_1 \le f(x)  \le f(y) + \psi_{\delta}(x, y)+ \frac{L}{2}\|x-y\|^2_2 + \delta_2
$$
as well as appropriate generalizations of GM and FGM, the sequence generated by these methods satisfies \eqref{estimate}.}
\dd{It should be noted that}, \pd{despite there are many variants of FGM, we are aware of only one which can be generalized for problems with inexact model, namely accelerated mirror descent type of FGM \cite{tseng2008accelerated,lan2012optimal,dvurechensky2017adaptive}. An important feature of this method is that it requires only one projection step on each iteration.} \pd{A primal-dual extension of the proposed framework is made in \cite{tyurin2019primal}.}



\pd{We also show that} in the case of  $\mu$-strongly convex objective (model) the estimate \eqref{estimate} can be improved to
$$f(x_N) - f(x_*) = O\left(\Delta f \exp\left(-O(1)\left(\frac{\mu}{L}\right)^{\frac{1}{p}}N\right) + \delta_1 + \left(\frac{L}{\mu}\right)^{\frac{p-1}{2}}\delta_2  \right),$$
where $\Delta f = f(x^0) - f(x_*)$, $p=1$ for GM and $p=2$ for restarted FGM.

In this paper \pd{we also propose a generalization of this inexact model framework for} saddle-point problems and variational inequalities. We consider universal (adaptive) generalizations \pd{in the spirit of} \cite{nesterov2015universal} and relative smoothness generalizations, generalizing \pd{the framework} \cite{bauschke2016descent,lu2018relatively} \pd{from optimization problems to saddle-point problems and VI}. We also investigate the sensitivity of the convergence results to the accuracy \pd{of auxiliary minimization on each iteration.}

\pd{Due to the lack of the space we only briefly mention here an extension of our framework for block-coordinate descent using the randomized version of FGM in \cite{dvurechensky2017randomized} and stochastic optimization problems using the ideas from \cite{gasnikov2017universal}.} For the \pd{latter case} we indicate that if we additionally assume that $\delta_1, \delta_2$ are independently chosen at each iteration random variables such that
$\mathds{E} \delta_1 = 0$ and $\delta_1,\sqrt{\delta_2}$ have correspondingly $\left(\delta_1'\right)^2$-subgaussian  variance and $\delta_2'$-subgaussian second moment
\cite{gorbunov2019optimal} then with a high probability \eqref{estimate} changes to
\begin{equation*}
f(x_N) - f(x_*) = \tilde{O}\left(\frac{LR^2}{N^p} + \frac{\delta_1'}{\sqrt{N}} + N^{p-1}\delta_2'\right).
\end{equation*}
From this result and mini-batch trick \cite{gasnikov2017universal} one can obtain the main estimates for convex and strongly convex stochastic optimization problems \cite{dvurechensky2016stochastic,gasnikov2016stochasticInter,gorbunov2019optimal,kulunchakov2019estimate}.

\pd{As further generalizations we point a generalization for tensor methods \cite{nesterov2018implementable,gasnikov2019near} and for incremental and variance reduction methods for finite-sum minimization \cite{defazio2016simple,lan2018random}.
}


\bibliography{PD_references}

\appendix

\section{Auxiliary facts}
Let us comment the inexact solution of the auxiliary problem.
\begin{remark}
\label{RemarkInexact}
We can show that if $\widetilde{x} \in \text{Arg}\min_{x \in Q}^{\widetilde{\delta}}\Psi(x)$, then $\Psi(\widetilde{x}) - \Psi(x_*) \leq \delta$. Indeed, we have $\Psi(x_*) \geq \Psi(\widetilde{x}) + \langle h, x_* - \widetilde{x} \rangle \geq \Psi(\widetilde{x}) - \widetilde{\delta}$. The converse statement is not always true. However, for some general cases we can resolve the problem (see \cite{tyurin2017fast} and Example \ref{appendix_inexact}).
\end{remark}

\begin{example}
\label{appendix_inexact}
Let us show an example, how we can resolve the problem in Remark \ref{RemarkInexact}. \gav{Note, that if $\Psi(x)$ is $\mu$-strongly convex; has $L$-Lipschitz continuous gradient in $\|\cdot\|$ norm. To say more precisely $$L=\max_{\|h\|\le 1, x \in[\widetilde{x},x_*]}\la h,\nabla^2 \Psi(x)h \ra.$$
and $R = \max_{x,y\in Q} \|x-y\|$, then  $\Psi(\widetilde{x}) - \Psi(x_*)\le\widetilde{\epsilon}$ entails that \cite{stonyakin2019gradient}
\begin{gather}\label{inexact}
\widetilde{\delta}\le (LR+\|\nabla\Psi(x_*)\|_*)\sqrt{2\tilde{\e}/\mu},
\end{gather}
where $x_* = \argmin_{x\in Q}\Psi(x)$.}
\gav{If one can guarantee that $\nabla\Psi(x_*) = 0$, then \eqref{inexact} can be improved $\widetilde{\delta}\le R\sqrt{2L\widetilde{\e}}.$ }
\end{example}

\section{Proof for Lemma \ref{lemma:a_n_sequence}}
\label{proof:a_n_sequence}

\begin{proof}
In view of the definition \eqref{alpha_def_strong} of sequence $\alpha_{k+1}$, we have:
\begin{align*}
A_N &\leq A_N(1 + \mu A_{N-1} + m A_{N-1}) = L_{N}(A_N - A_{N-1})^2\\
&\leq L_{N}(A_N^{1/2} - A_{N-1}^{1/2})^2(A_N^{1/2} + A_{N-1}^{1/2})^2 \leq 4L_{N} A_N (A_N^{1/2} - A_{N-1}^{1/2})^2.
\end{align*}
We can see that
\begin{align*}
A_N^{1/2} \geq A_{N-1}^{1/2} + \frac{1}{2\sqrt{L_N}}
\end{align*}
and
\begin{align*}
A_N \geq \frac{1}{4}\left(\sum_{k=0}^{N-1}\frac{1}{\sqrt{L_{k+1}}}\right)^2.
\end{align*}
For the case when $\mu + m> 0$ we obtain:
\begin{align*}
(\mu + m) A_{N-1} A_{N} \leq A_N(1 + \mu A_{N-1} + m A_{N-1}) \leq 4L_{N} A_N (A_N^{1/2} - A_{N-1}^{1/2})^2.
\end{align*}
From the fact that $A_1 = 1 / L_1$ and the last inequality we can show that
\begin{align*}
A_N^{1/2} \geq \left(1 + \sqrt{\frac{\mu + m}{4L_N}}\right)A_{N-1}^{1/2} \geq \frac{1}{\sqrt{L_1}}\prod_{k=1}^{N-1}\left(1 + \sqrt{\frac{\mu + m}{4L_{k+1}}}\right).
\end{align*}
\end{proof}

\section{Fast gradient method with $(\delta, L, \mu, m, V, \|\cdot\|)$-model. Restart technique.}

Let us consider the case of a smooth strongly convex function $f$ and show how to accelerate the work of  Algorithm \ref{FastAlg2_strong} using the restart technique. Let us assume that
\begin{equation}
\psi_{\delta}(x,x_*) \ge 0\,\,\, \forall x \in Q.
\end{equation}
Note that this assumption is natural, e.g. $\psi_{\delta}(x, y):= \langle\nabla f(y), x - y \rangle \,\,\, \forall x,y \in Q$. We also modify the concept of $\mu$-strong convexity in the following way.

\begin{definition}\label{defRelStronglyConvexRest}
 We say that the function $f$ is a left relative $\mu$-strongly convex if the following inequality
   \begin{gather*}
     \mu V[x](y) \leq f(x) - f(y)- \psi_{\delta}(x, y) \quad \forall x, y \in Q
   \end{gather*}
holds.
\end{definition}

\begin{remark}\label{RemEuclidBreg}
Let us remind that if $d(x-y) \leq C_n\norm{x-y}^2$ for $C_n = O(\log n)$, (where $n$ is the dimension of vectors from $Q$)
then $V[y](x) \leq C_n\norm{x-y}^2$. This assumption is true for many standard proximal setups. In this case the condition of $(\mu C_n)$-strong convexity
$$\mu C_n \norm{x-y}^2 + f_\delta(y) + \psi_\delta(x,y) \leq f(x)$$
entails right relative strong convexity:
$$\mu V[y](x) + f_\delta(y) + \psi_\delta(x,y) \leq f(x).$$
\end{remark}

Note that the concepts of right and left relative strongly convexity from Definitions \ref{defRelStronglyConvex} and \ref{defRelStronglyConvexRest} are equivalent in the case of an assumption from Remark \ref{RemEuclidBreg} ($V[x](y) \leq C_n \|x - y\|^2$ for each $x, y \in Q$).

We show that using the restart technique can also accelerate the work of non-adaptive version of Algorithm \ref{Alg2} ($L_{k+1} = L$) for $(\delta, L, 0, 0, V, \|\cdot\|)$-model and relative $\mu$-strongly convex function $f$ in the sense of Definition \ref{defRelStronglyConvexRest}:
\begin{gather*}
  \mu V[x](y) + f(y)+ \psi_{\delta}(x, y) - \delta \leq f(x) \leq f(y) + \psi_{\delta}(x, y) + \frac{L}{2}\|x - y\|^2 + \delta.
\end{gather*}
for each $x, y \in Q$.
By Theorem \ref{Th:fast_str_conv_adap} and Remark \ref{remark:FastGradConvRate}, we have:
\begin{equation}\label{FGeq1}
f(x_N) - f(x_*) \leq \dfrac{4LV[x_0](x_*)}{N^2}+\dfrac{4L\widetilde{\delta}}{N} + 2N\delta.
\end{equation}
Consider the case of relatively $\mu$-strongly convex function $f$. We will use the restart technique to obtain the method for strongly convex functions.

\begin{theorem}
   Let $f$ be a left relative $\mu$-strongly convex function and $ \psi_{\delta}(x,y)$ is a \\ $(\delta, L, 0, 0, V, \|\cdot\|)$-model. Let $\delta$ and $\widetilde{\delta}$ satisfy
   $\frac{4\mu\sqrt{10}}{L}\left(5\delta{\left \lceil{\sqrt{\frac{L}{\mu}}}\right \rceil}^3+{\widetilde{\delta}}L{\left \lceil{\sqrt{\frac{L}{\mu}}}\right \rceil} \right) \leq \varepsilon.$
   Then, using the restarts of Algorithm \ref{Alg2}, we need
   $$ N = \left \lceil \log_2 \dfrac{\mu R^2}{\varepsilon} \right\rceil \cdot \left \lceil \sqrt{\dfrac{10L}{\mu}}\right \rceil.$$
   iterations to achieve an $\varepsilon$ accuracy by function: $f(x_N) - f(x_*) \leq \varepsilon$.
\end{theorem}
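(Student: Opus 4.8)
The plan is to run Algorithm \ref{FastAlg2_strong} (the fast method whose rate is recorded in \eqref{FGeq1}) in a sequence of \emph{restarts}, each consisting of $\widehat{N}=\lceil\sqrt{10L/\mu}\rceil$ inner iterations, and to exploit the left relative $\mu$-strong convexity of $f$ (Definition \ref{defRelStronglyConvexRest}) to turn the guaranteed decrease of the objective residual into a geometric contraction. Write $x^{(p)}$ for the output of the $p$-th restart (with $x^{(0)}=x_0$) and $\Delta_p=f(x^{(p)})-f(x_*)$.

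First I would record the elementary consequence of the hypotheses that links the objective gap to the Bregman divergence appearing on the right-hand side of \eqref{FGeq1}. Taking $x=x^{(p)}$, $y=x_*$ in Definition \ref{defRelStronglyConvexRest} and using the standing assumption $\psi_{\delta}(x,x_*)\geq 0$ gives
\begin{equation*}
\mu V[x^{(p)}](x_*)\leq f(x^{(p)})-f(x_*)-\psi_{\delta}(x^{(p)},x_*)\leq \Delta_p .
\end{equation*}
Substituting this into the rate \eqref{FGeq1} applied to the $(p+1)$-th restart (which starts from $x^{(p)}$ and runs $\widehat{N}$ steps) yields
\begin{equation*}
\Delta_{p+1}\leq \frac{4L}{\mu\widehat{N}^2}\,\Delta_p+E,\qquad E:=\frac{4L\widetilde{\delta}}{\widehat{N}}+2\widehat{N}\delta .
\end{equation*}
The quantitative heart of the argument is the choice $\widehat{N}=\lceil\sqrt{10L/\mu}\rceil$, which forces $\widehat{N}^2\geq 10L/\mu$ and hence the contraction factor $\frac{4L}{\mu\widehat{N}^2}\leq\frac{2}{5}<\frac12$.

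Next I would unroll the recursion $\Delta_{p+1}\leq\frac12\Delta_p+E$ over $P:=\lceil\log_2\frac{\mu R^2}{\varepsilon}\rceil$ restarts to get $\Delta_P\leq 2^{-P}\Delta_0+2E$. Bounding the initial residual by a multiple of $\mu R^2$ through $\|x_0-x_*\|^2\leq R^2$ and the prox estimate of Remark \ref{RemEuclidBreg}, the reducible term satisfies $2^{-P}\Delta_0\leq\varepsilon/2$ for this $P$. It then remains to verify that the accumulated error $2E$ is at most $\varepsilon/2$: expanding $E$ with $\widehat{N}=\lceil\sqrt{10L/\mu}\rceil$ and estimating $\widehat{N}\leq\sqrt{10}\,\lceil\sqrt{L/\mu}\rceil$ converts this requirement into precisely the stated hypothesis $\frac{4\mu\sqrt{10}}{L}\bigl(5\delta\lceil\sqrt{L/\mu}\rceil^3+\widetilde{\delta}L\lceil\sqrt{L/\mu}\rceil\bigr)\leq\varepsilon$. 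Combining the two bounds gives $\Delta_P\leq\varepsilon$, and since each restart costs $\widehat{N}$ inner iterations the total is $N=P\,\widehat{N}=\lceil\log_2\frac{\mu R^2}{\varepsilon}\rceil\cdot\lceil\sqrt{10L/\mu}\rceil$.

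The main obstacle I expect is the careful bookkeeping of the additive error $E$ across restarts so that it collapses to exactly the stated condition on $\delta,\widetilde{\delta}$: one must track the interplay of the two error sources $\frac{4L\widetilde{\delta}}{\widehat{N}}$ and $2\widehat{N}\delta$ and replace $\lceil\sqrt{10L/\mu}\rceil$ by $\lceil\sqrt{L/\mu}\rceil$ up to the $\sqrt{10}$ factor without losing constants. A secondary subtlety is directional: left relative strong convexity together with $\psi_{\delta}(\cdot,x_*)\geq 0$ is exactly what passes from the objective residual to $V[\cdot](x_*)$ (the quantity fed back into \eqref{FGeq1}), whereas the initialization needs an upper bound of the opposite type, supplied by the smoothness and prox assumptions. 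Once these inequalities are oriented correctly, the contraction and telescoping are routine.
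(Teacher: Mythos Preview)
Your proposal follows the same restart strategy as the paper: run the fast method for $\widehat N=\lceil\sqrt{10L/\mu}\rceil$ inner steps, use left relative $\mu$-strong convexity to obtain a geometric contraction, and count $\lceil\log_2(\mu R^2/\varepsilon)\rceil$ outer restarts. The one structural difference is the quantity you contract. The paper halves the Bregman divergence $V_p:=V[x^{(p)}](x_*)$: from \eqref{FGeq2} and the auxiliary condition \eqref{FGeq4} it gets $V_{p+1}\le\tfrac12 V_p$, and only at the very end converts $V_p$ back to the objective gap via \eqref{FGeq1}. You instead halve the objective gap $\Delta_p$ directly. Both are legitimate, and your treatment of the additive error (a fixed $E$ summed geometrically to $2E$) is arguably cleaner than the paper's device \eqref{FGeq4}, which imposes a condition that tightens as $V_p$ shrinks and is then read off at the final stage.

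There is, however, a genuine gap in your initialization. You write that ``bounding the initial residual by a multiple of $\mu R^2$'' via Remark~\ref{RemEuclidBreg} gives $2^{-P}\Delta_0\le\varepsilon/2$ for $P=\lceil\log_2(\mu R^2/\varepsilon)\rceil$. But smoothness and the prox estimate yield $\Delta_0\lesssim LR^2$, not $\mu R^2$; strong convexity goes the wrong way here. With $\Delta_0\lesssim LR^2$ you would need $\lceil\log_2(LR^2/\varepsilon)\rceil$ restarts, which differs from the claimed count by $\log_2(L/\mu)$. The fix is simple and is implicit in the paper's choice of contracted quantity: do not bound $\Delta_0$ at all. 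Apply \eqref{FGeq1} to the \emph{first} restart starting from $V[x_0](x_*)\le R^2$ to get $\Delta_1\le \tfrac{4L R^2}{\widehat N^2}+E\le \tfrac{2}{5}\mu R^2+E$, and then run your recursion $\Delta_{p+1}\le\tfrac12\Delta_p+E$ from $p=1$. This recovers exactly the stated restart count. Equivalently, you can switch to contracting $V_p$ as the paper does, which starts cleanly from $V_0\le R^2$.
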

\begin{proof}

By $\eqref{FGeq1}$ and Definition \ref{defRelStronglyConvexRest}, we have:
\begin{equation}\label{FGeq2}
\mu V[x_{N_1}](x_*) \le f(x_{N_1}) - f(x_*) \le \dfrac{4LV[x_0](x_*)}{N^2} + \dfrac{4L\widetilde{\delta}}{N} +2N\delta.
\end{equation}
Let's choose $N_1$ so that the following inequality holds:
\begin{equation}\label{FGeq4}
\dfrac{4L\widetilde{\delta}}{N_1}+2N_1\delta \leq \dfrac{LV[x_0](x_*)}{N_1^2}.
\end{equation}
We restart method as $V[x_{N_1}](x_*)\leq\frac{V[x_0](x_*)}{2}.$
Using  \eqref{FGeq2}, we obtain an estimation for the number of iterations on the first restart:
 $\dfrac{5L}{\mu N_1^2} \le \dfrac{1}{2}.$
Therefore, let's choose
\begin{equation}\label{FGeq3}
N_1 = \left \lceil \sqrt{\dfrac{10L}{\mu}}\right \rceil.
\end{equation}
Then after $N_1$ iterations we restart method. Similarly, we restart after $N_2$ iterations, such that $V[x_{N_2}](x_*)\leq \frac{V[x_{N_1}](x_*)}{2}$. We obtain $N_2 = \left \lceil \sqrt{\frac{10L}{\mu}}\right \rceil.$
So, after $p$-th restart the total number of iterations is $M = p \cdot  \left \lceil \sqrt{\frac{10L}{\mu}}\right \rceil.$

Now let's consider how many iterations are needed to achieve the accuracy $\varepsilon = f(x_{N_p}) - f(x_*)$.
From \eqref{FGeq1} and \eqref{FGeq3} we take $p  = \left \lceil \log_2 \dfrac{\mu R^2}{\varepsilon} \right\rceil$
and the total number of iterations is $M = \left \lceil \log_2 \dfrac{\mu R^2}{\varepsilon} \right\rceil \cdot \left \lceil \sqrt{\dfrac{10L}{\mu}}\right \rceil.$

We have chosen our errors as $\delta$ and $\widetilde{\delta}$ to satisfy \eqref{FGeq4}. Indeed, from \eqref{FGeq4} using $N_k = \left \lceil \sqrt{\dfrac{10L}{\mu}}\right \rceil$ we can deduce the following inequality:
$$\varepsilon \ge \dfrac{4\sqrt{10}\mu}{L}\left(5\delta\left \lceil \sqrt{\dfrac{L}{\mu}}\right \rceil^3 +  \widetilde{\delta} L \left \lceil \sqrt{\dfrac{L}{\mu}}\right \rceil \right).$$
One can see that such a choice of $\delta$ and $\widetilde{\delta}$ as above satisfies that inequality.
\end{proof}

\begin{remark}
Partially, we can choose $\delta = O\left(\frac{\varepsilon L}{\mu  \left \lceil \sqrt{\frac{10L}{\mu}}\right \rceil^3} \right)$ and $\widetilde{\delta} = O\left(\frac{\varepsilon}{\mu  \left \lceil \sqrt{\frac{10L}{\mu}}\right \rceil}\right)$.
\end{remark}

\section{Numerical results for some different geometrical problems }\label{app_numerical_1}
To show the advantages of the proposed Algorithm \ref{Alg:UMPModel}, a series of numerical experiments were performed for different examples.

We consider an example of a variational inequality with a non-smooth and non-strongly monotone operator, i.e. with $\nu = 0$, for which the proposed Algorithm \ref{Alg:UMPModel}, due to its adaptivity to the level of smoothness of the problem, in practice works with iteration complexity much smaller than the predicted by theory.
We compare the performance of Algorithm \ref{Alg:UMPModel}, with two different approaches for step-size selection, the first one is the proposed procedure in this paper, i.e. with $L_{k+1} = 2^{i_k - 1} L_k$, the second procedure with $L_{k+1} = 2^{i_k} L_k$, i.e.  with a non-increasing stepsize similarly to \cite{antonakopoulos2019adaptive}, for three different geometrical problems with some non-smooth functional constraints.

Let $Q$ be a convex compact set and $A_k \in \mathbb{R}^n, k=1,...,N$ be a given set of $N$ points.

 \textbf{The first problem} is naturally connected with the analogue of the well-known {\it smallest covering circle problem}. For this problem we need to solve the following optimization problem
\begin{equation}\label{prob_1}
     \min_{x \in Q} \left\{f(x):= \max_{1\leq k \leq N} \|x - A_k\|_2 \left| \; \varphi_p(x):= \sum_{i=1}^n \alpha_{pi}|x_i| -  1 \leq 0 \right., \; p=1,...,m \right\}.
\end{equation}

\textbf{The second problem} is naturally connected with the analogue of the well-known {\it Fermat--Torricelli--Steiner problem}. For this problem we need to solve the following optimization problem
\begin{equation}\label{prob_2}
	 \min_{x \in Q} \left\{f(x):= \sum\limits_{k=1}^N \|x - A_k\|_2 \left| \; \varphi_p(x):= \sum_{i=1}^n \alpha_{pi}|x_i| -  1 \leq 0 \right., \; p=1,...,m \right\}.
\end{equation}
	
\textbf{The third problem} is connected with \textit{problem of the best approximation of the distance from the given set $Q$}. For this problem, let $A \notin Q$ be a given point,  we need to solve the following optimization problem
\begin{equation}\label{prob_3}
	\min_{x \in Q} \left\{f(x):= \|x - A\|_2 \left| \; \varphi_p(x):= \sum_{i=1}^n \alpha_{pi}|x_i| -  1 \leq 0 \right., \; p=1,...,m \right\}.
\end{equation}

The coefficients $\alpha_{pi}$, in \eqref{prob_1}, \eqref{prob_2} and \eqref{prob_3} are drawn from the standard normal distribution and then truncated to be positive.
	
The corresponding Lagrange saddle-point problem is defined as
$$
	\min_{x \in Q_1} \max_{\overrightarrow{\lambda} = (\lambda_1,\lambda_2,\ldots,\lambda_m)^T \in Q_2} L(x,\lambda):=f(x)+\sum\limits_{p=1}^m\lambda_p\varphi_p(x),
$$
This problem is equivalent to the variational inequality with monotone bounded operator
$$
	g(x,\lambda)=
	\begin{pmatrix}
	\nabla f(x)+\sum\limits_{p=1}^m\lambda_p\nabla\varphi_p(x), \\
	(-\varphi_1(x),-\varphi_2(x),\ldots,-\varphi_m(x))^T
	\end{pmatrix}.
$$
For simplicity, we assume that there exist (potentially very large) bound for the optimal Lagrange multiplier $\overrightarrow{\lambda}^*$, which allows us to compactify the feasible set for the pair $(x,\overrightarrow{\lambda})$ to be an Euclidean ball of some radius.

We run Algorithm \ref{Alg:UMPModel} for different values of  $n$ and  $m$ with standard Euclidean prox-structure and the starting point $(x^0, \overrightarrow{\lambda}^0) = \frac{1}{\sqrt{m+n}} \textbf{1} \in \mathbb{R}^{n+m}$, where $\textbf{1} $ is the vector of all ones. Points $A_k$, $k=1,...,N$, in \eqref{prob_1}, \eqref{prob_2} and the point $A$ in \eqref{prob_3} are chosen randomly from standard normal distribution. The set $Q_1$ is chosen as the unit ball in $\mathbb{R}^n$, also $Q_2$ is chosen as the unit ball in $\mathbb{R}^m_+$.

All experiments were implemented in Python 3.4, on a computer fitted with Intel(R) Core(TM) i7-8550U CPU @ 1.80GHz, 1992 Mhz, 4 Core(s), 8 Logical Processor(s). RAM of the computer is 8 GB.

The results of the comparison, for problems \eqref{prob_1}, \eqref{prob_2} and \eqref{prob_3}, are represented in Tables \ref{table_1}, \ref{table_2} and \ref{table_3}, respectively. These results demonstrate the number of the produced iterations to reach the $\varepsilon$-solution of the three considered problems, and the running time of the algorithm in seconds, with different values of $\varepsilon \in \{ 1/2^i, i=1,2,3,4,5,6\}$.

In general, from all conducted experiments, we can see that, in order to the number of iterations and the running time, the proposed procedure of the step-size selection in Algorithm \ref{Alg:UMPModel} is the best, the efficiency of this procedure is represented by its very high execution speed, where by the proposed  Algorithm \ref{Alg:UMPModel} one needs a few seconds and part of seconds to achieve the solution and to reach its stopping criterion.

\begin{table}[htp]
\centering
\caption{The results for the  problem \ref{prob_1}, with  $n =100, m=100$  and $N = 50$.}
\label{table_1}
\begin{tabular}{|c|c|c|c|c|}
\hline
\multirow{2}{*}{$\frac{1}{\varepsilon}$} & \multicolumn{2}{c|}{\begin{tabular}[c]{@{}c@{}}Algorithm \ref{Alg:UMPModel}, with \\ $L_{k+1} = 2^{i_k -1} L_k$.\end{tabular}} & \multicolumn{2}{c|}{\begin{tabular}[c]{@{}c@{}}Algorithm \ref{Alg:UMPModel}, with \\ $L_{k+1} = 2^{i_k} L_k$.\end{tabular}} \\ \cline{2-5}
& Iterations & Time (sec.)& Iterations & Time (sec.)\\ \hline
2	& 6   &0.242 &82    &3.985 \\
4	& 7   &0.272 &163   &9.207  \\
8	& 8   &0.330 &646   &35.780 \\
16	& 10  &0.443 &2587  &142.047 \\
32	& 11  &0.479 &10371 &530.594  \\
64	& 12  &0.513 &41558 &2081.414  \\ \hline
\end{tabular}
\end{table}

\begin{table}[htp]
	\centering
	\caption{The results for the problem \eqref{prob_2}, with $n=100, m=100$  and $N = 50$.}
	\label{table_2}
	\begin{tabular}{|c|c|c|c|c|}
		\hline
		\multirow{2}{*}{$\frac{1}{\varepsilon}$} & \multicolumn{2}{c|}{\begin{tabular}[c]{@{}c@{}}Algorithm \ref{Alg:UMPModel}, with \\ $L_{k+1} = 2^{i_k -1} L_k$.\end{tabular}} & \multicolumn{2}{c|}{\begin{tabular}[c]{@{}c@{}}Algorithm \ref{Alg:UMPModel}, with \\ $L_{k+1} = 2^{i_k} L_k$.\end{tabular}} \\ \cline{2-5}
		& Iterations & Time (sec.)& Iterations & Time (sec.)\\ \hline
		2	&8   &2.997 & 95   &27.527  \\
		4	&9   &3.459 &189   &52.421 \\
		8	&11  &4.346 &378   &105.997 \\
		16	&12  &4.124 &756   &218.519  \\
		32	&13  &4.615 &1511  &473.350 \\
		64	&15  &5.195 &3021  &881.917 \\ \hline
	\end{tabular}
\end{table}

\begin{table}[htp]
	\centering
	\caption{The results for the problem \ref{prob_3}, with $n = 500$ and  $m = 100$.}
	\label{table_3}
	\begin{tabular}{|c|c|c|c|c|}
		\hline
		\multirow{2}{*}{$\frac{1}{\varepsilon}$} & \multicolumn{2}{c|}{\begin{tabular}[c]{@{}c@{}}Algorithm \ref{Alg:UMPModel}, with \\ $L_{k+1} = 2^{i_k -1} L_k$.\end{tabular}} & \multicolumn{2}{c|}{\begin{tabular}[c]{@{}c@{}}Algorithm \ref{Alg:UMPModel}, with \\ $L_{k+1} = 2^{i_k} L_k$.\end{tabular}} \\ \cline{2-5}
		& Iterations & Time (sec.)& Iterations & Time (sec.)\\ \hline
		2	&8   &1.273  &86   &11.293 \\
		4	&9   &1.365  &342  &42.407 \\
		8	&10  &1.491  &684  &100.926 \\
		16	&12  &1.837  &1367 &171.827 \\
		32	&13  &1.982  &2733 &358.685  \\
		64	&15  &2.382  &5465 &732.999 \\ \hline
	\end{tabular}
\end{table}

\section{Numerical results for relatively smooth objective function }
\label{appendix_relative_obj}
Let
$$
    f(x):=\frac{1}{4}\|A x-b\|_{4}^{4}+\frac{1}{2}\|C x-d\|_{2}^{2}.
$$
where $A, C \in \mathbb{R}^{n\times n}$ and $b, d \in \mathbb{R}^n$ \cite{lu2018relatively}. We consider the following constrained optimization problem
\begin{equation}\label{prob_relative}
	\min_{x \in Q} \left\{f(x)  \left| \; \varphi_p(x):= \sum_{i=1}^n \alpha_{pi}x_i -  1 \leq 0 \right., \; p=1,...,m \right\},
\end{equation}
where $Q$ is a convex compact, $\alpha_{pi}$ are drawn from the standard normal distribution and then truncated to be positive. The corresponding Lagrange saddle-point problem and the monotone non-smooth operator associated with the variational inequality problem, defined similarly as in the Appendix \ref{app_numerical_1}.

We run Algorithm \ref{Alg:UMPModel}, for different values of  $n$ and  $m$ with standard Euclidean prox-structure and the same inputs as in Appendix \ref{app_numerical_1}. The results are represented in Table \ref{table_UMP}. These results demonstrate the number of the produced iterations, by Algorithm \ref{Alg:UMPModel}, to reach the $\varepsilon$-solution of \eqref{prob_relative}, and the running time of the algorithm in seconds, with different values of  $\varepsilon \in \{ 1/2^i, i=1,2,3,4,5,6\}$.

\begin{table}[h]
\centering
\caption{The results of Algorithm \ref{Alg:UMPModel}, for problem \eqref{prob_relative}, with  different values of $m$ and $n$.}
\label{table_UMP}
\begin{tabular}{|c|c|c|c|c|}
\hline
\multirow{2}{*}{$\frac{1}{\varepsilon}$} & \multicolumn{2}{c|}{$n = 100, m = 10$} & \multicolumn{2}{c|}{$n =200, m = 10$} \\ \cline{2-5}
	& Iterations & Time (sec.)& Iterations & Time (sec.)\\ \hline
2	&2486  &12.603  &5271   &38.381  \\
4	&4792  &23.380  &10084  &73.649  \\
8	&9380  &43.940  &19589  &140.668  \\
16	&18503 &86.753  &38406  &269.261 \\
32	&36686 &186.048 &75610  &537.010   \\
64	&72990 &369.457 &149761 &1066.677  \\ \hline \hline
\multirow{2}{*}{} & \multicolumn{2}{c|}{$n = 300, m = 10$} & \multicolumn{2}{c|}{$n = 400, m = 10$} \\ \cline{2-5}
2	&7921   &79.872   &11076  & 134.754 \\
4	&15581  &145.664  &22027  & 264.115 \\
8	&31082  &318.656  &43917  & 527.536 \\
16	&62085  &612.985  &87686  & 1041.121\\
32	&124099 &1258.532 &175198 & 2098.559  \\
64	&248135 &2424.470 &350201 & 4189.408 \\ \hline \hline
\multirow{2}{*}{} & \multicolumn{2}{c|}{$n = 100, m = 50$} & \multicolumn{2}{c|}{$n = 200, m = 50$} \\ \cline{2-5}
2	&2491   &38.386   &5472  &139.710  \\
4	&4813   &72.719   &10767  &273.139 \\
8	&9456   &142.670  &21341  &551.234  \\
16	&18803  &282.562  &42465  &1077.094 \\
32	&37630  &569.258  &84703  &2256.059   \\
64	&75438  &1140.276 &169159  &4388.328  \\ \hline
\end{tabular}
\end{table}

Now let us consider the unconstrained case of this relatively smooth optimization problem:
\begin{equation}
    \label{poly_problem}
    \min_x f(x):=\frac{1}{4}\|A x-b\|_{4}^{4}+\frac{1}{2}\|C x-d\|_{2}^{2},
\end{equation}
where $A, C \in \mathbb{R}^{n\times n}$ and $b, d \in \mathbb{R}^n$. $f$ is $L$-smooth relative to $h(x) := \frac{1}{4} \|x\|^4_2 + \frac{1}{2} \|x\|^2_2$
with constant
$$L = 3 \|A\|^4 + 6 \|A\|^3 \|b\|_2 + 3 \|A\|^2 \|b\|_2^2 + \|C\|^2.$$

In Figure \ref{fig_poly} we compare the performance of Gradient Method for relatively smooth problems \cite{lu2018relatively} and Adaptive Gradient Method proposed in this paper. Components of $A$, $C$, $b$, and $d$ are drawn from the uniform distribution $\mathcal{U}(0.95, 1.05)$. The horizontal axis is the number of solved auxiliary problems (\ref{equmir2DL_G_S}), and therefore the plot is not strictly decreasing. The graph shows that adaptive choice of local Lipschitz constant improves the convergence of the gradient method.

\begin{figure}[htp]
\begin{center}
	\includegraphics[width=0.47\linewidth]{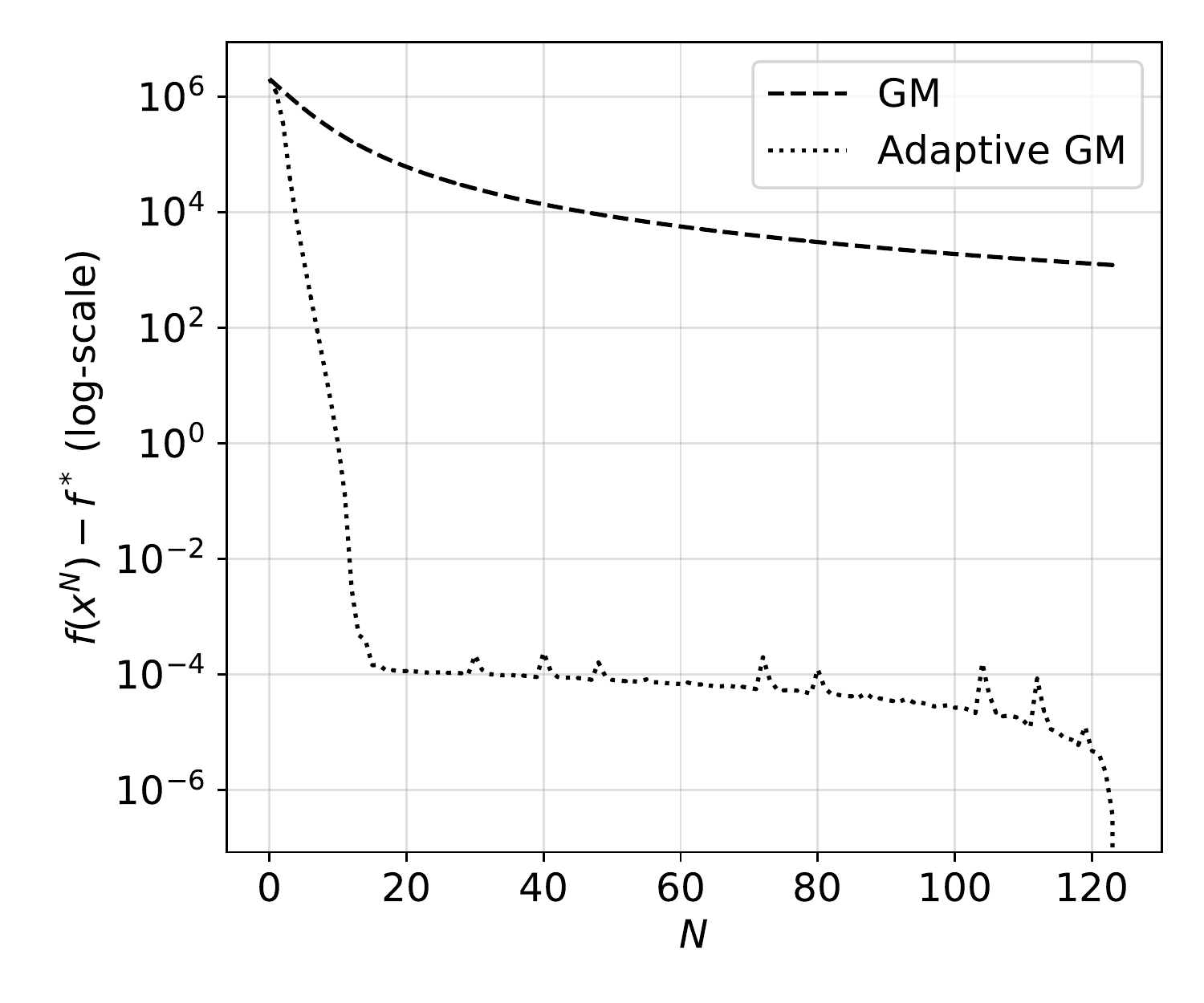}
\end{center}
\caption{The results of Adaptive GM (Algorithm \ref{Alg2}) for problem (\ref{poly_problem}) with $n=100$.}
\label{fig_poly}
\end{figure}

\section{Numerical results for D-optimal design problem }\label{D_optimal_problem}
Let us consider the D-optimal design problem:
\begin{equation}
    \label{log_problem}
    \min_x f(x):= -\ln \det (H X H^\top)
    \quad \text{s.t.\;} \langle e, x\rangle = 1, \quad x \geq 0,
\end{equation}
where $x \in \mathbb{R}^n$, $X = \text{Diag}(x)$, $H \in \mathbb{R}^{m \times n}$, $e = (1, 1, ..., 1)^\top$. $f$ is $1$-smooth relative to $h(x) := - \sum_{j=1}^n \ln x_j$.

Note that simplicial constraints of the problem are satisfied for the solutions supplied by the considered methods  (Algorithm \ref{Alg2}) due to the use of a special procedure described in \cite{lu2018relatively} for solving the auxiliary problem (\ref{equmir2DL_G_S}), which is based on the form of the function $h$ and the structure of constraints of this particular problem.

In this experiment, we consider several cases for different $L$ step sizes in the adaptive choice procedure in Algorithm \ref{Alg2}. Instead of iteration over values of local Lipschitz constant with fixed step $\zeta = 2$:
\[
L_{k+1} = 2^{i_k}L_k
\]
we introduce $\zeta > 1$ as a hyperparameter of the algorithm:
\[
L_{k+1} = \zeta^{i_k}L_k,
\]

In Figure \ref{fig_log} we compare the performance of the Gradient Method for relatively smooth problems \cite{lu2018relatively} and Adaptive Gradient Method proposed in this paper with different $L$ stepsizes $\zeta$. Part of diagonal components of $H$ are drawn from the uniform distribution $\mathcal{U}(0, 1)$ and the other part from the $\mathcal{U}(0, 200)$, off-diagonal elements are sparsely drawn from $\mathcal{U}(0, 1)$ (proportion of nonzero elements is $0.05$). The graph shows that the adaptive choice of the local Lipschitz constant improves the convergence of the gradient method, and the convergence rate can be controlled by the $\zeta$ hyperparameter tuning. The vertical axis of the graph measures $f(x^N) - f^*$ in logarithmic scale with $f^*$ estimated as $f^* \geq f(x^N) - R$ where $R$ is defined by \eqref{analysis_algorithm_grad:ineq_2}.

\begin{figure}[htp]
\begin{center}
	\minipage{0.47\textwidth}
	\includegraphics[width=\linewidth]{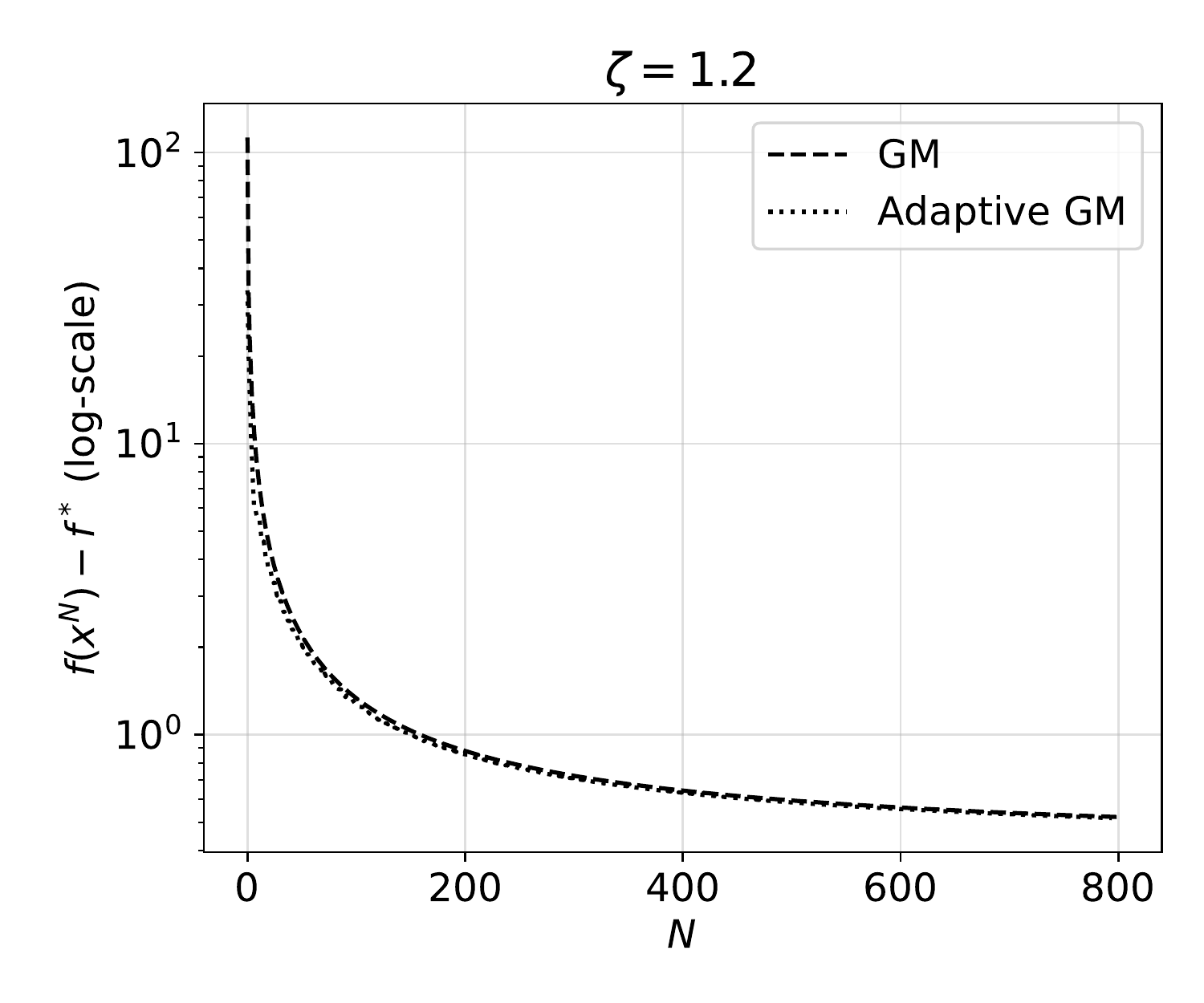}
	\endminipage
	\minipage{0.47\textwidth}
	\includegraphics[width=\linewidth]{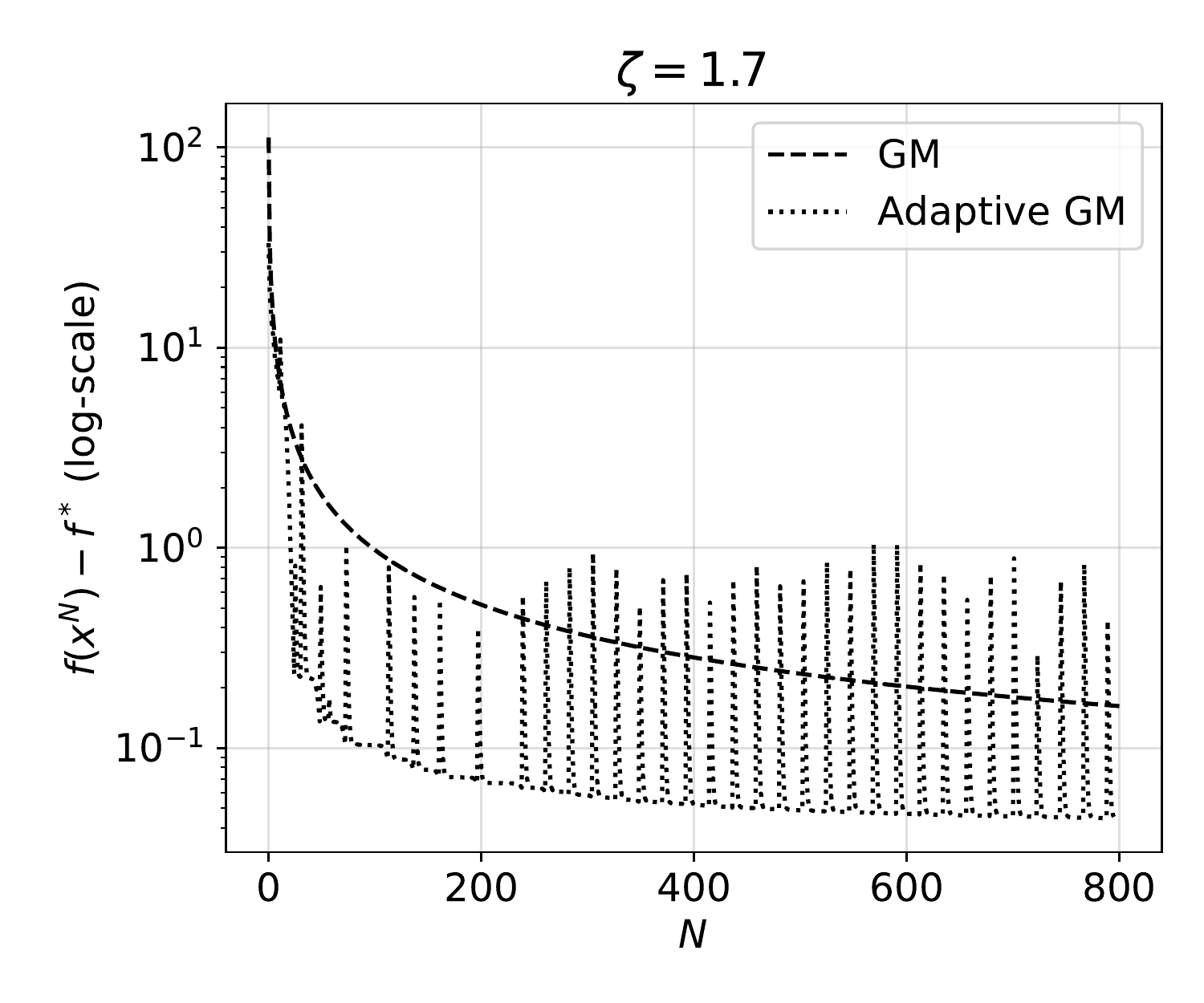}
	\endminipage
\end{center}
\caption{The results of Adaptive GM (Algorithm \ref{Alg2}) for problem (\ref{log_problem}) with \dip{$m=100$, $n=200$}.}
\label{fig_log}
\end{figure}

\section{Numerical results for resource sharing problem}\label{app_RSP}
 
In this section, we report the results of a series of numerical experiments for the \textit{resource sharing problem} (RS-Problem) \cite{antonakopoulos2019adaptive}. This problem can be briefly described as follows.

Consider a set of $n$ resources $i \in I = \{1,2,\ldots, n\}$ serving a stream of demands that arrive at a rate of $r$ per unit of time. If the load on the $i$-th resource is $x_i$, the expected service time in the standard Kleinrock model\footnote{see: L. Kleinrock: Queueing Systems, volume 1: Theory. John Wiley \& Sons, New York, NY, 1975.} is given by the following  loss function
$$
    l_i(x) = \frac{1}{\alpha_i -  x_i},
$$
where $\alpha_i$ denotes the capacity of the resource. The set of feasible resource allocations is $Q = \{(x_1, \ldots, x_n): 0 \leq x_i < \alpha_i, \; x_1 + \ldots +x_n = R\}$. If we set the operator $g: Q \to \mathbb{R}^n$, such that $g(x) = \left(l_1(x_1), \ldots, l_n(x_n)\right)$. The point $x^*$ is an equilibrium allocation if and only if it is a solution of the corresponding variational inequality problem for the operator $g$.

Let $d(x) = \sum_{i =1}^{n}\frac{1}{1-x_i}$, the prox set-up given on the feasible set $Q$, then the associated Bregman divergence is
$$
V{[y]}(x) = \sum_{i= 1}^{n} \frac{(x_i - y_i)^2}{(1-x_i)(1-y_i)^2}
$$
By this setting, the operator $g$ is $\frac{1}{\sqrt{2}}$-Bregman continuous \cite{antonakopoulos2019adaptive} and relatively $\frac{1}{2}$-smooth relative to $d$ (see Examples \ref{Examp_Rel_Smooth_VI} and \ref{L_condition_Example}).

For the experiments, in the operator $g$, we take $\alpha_i = \frac{\sqrt{3}}{2}$, for all $i= {1,...,n}$ and $R = \frac{n}{2}$. We run Algorithm \ref{Alg:UMPModel}, for different values of $n$ and $\varepsilon$. 
The starting point $\left(\frac{1}{2}, \ldots, \frac{1}{2}\right) \in \mathbb{R}^n$, for this, in order to the stopping criterion \eqref{eq_Alg3}, we have the following estimate $\max\limits_{x \in Q}V[x^0](x) \leq 4n$.

We compare the performance of Algorithm \ref{Alg:UMPModel}, for $n = 100, 250, 500$ and $\varepsilon\in \{\frac{1}{2}, \frac{1}{4}, \frac{1}{8}, \frac{1}{16}, \frac{1}{32},\frac{1}{64}\}$, with three different approaches for step-size selection, the first one is a non-adaptive variant with fixed step-size $L$= 0.5, the second is the proposed procedure in this paper, i.e. with $L_{k+1}= 2^{i_k-1}L_k$, the third one is the procedure with $L_{k+1}= 2^{i_k}L_k$, i.e. non-increasing stepsize. The results of the comparison are presented in Tables \ref{tab_2}. 

As shown by the results of experiments, in Table \ref{tab_2}, the required number of steps and required time to achieve  the  necessary  stopping  criterion increases if the stepsize is not allowed to increase as in Algorithm 1 in \cite{antonakopoulos2019adaptive}. Moreover, from the results listed in Table \ref{tab_2}, we can see that the performance of the algorithm, in this case, is better than the non-adaptive version with the fixed stepsize. 

\begin{table}[htp]
	\centering
	\caption{The results for RS-Problem, with  different values of $n$ and $\varepsilon$.}
	\label{tab_2}
	\begin{tabular}{|c|c|c|c|c|c|c|}
		\hline
		\multirow{2}{*}{$\frac{1}{\varepsilon}$} & 
		\multicolumn{2}{c|}{\begin{tabular}[c]{@{}c@{}}Non-Adaptive Algor-\\ithm \ref{Alg:UMPModel}, with $L = 0.5$.\end{tabular}} & \multicolumn{2}{c|}{\begin{tabular}[c]{@{}c@{}}Algorithm \ref{Alg:UMPModel}, with \\ $L_{k+1} = 2^{i_k -1} L_k$.\end{tabular}} &
		\multicolumn{2}{c|}{\begin{tabular}[c]{@{}c@{}}Algorithm \ref{Alg:UMPModel}, with \\ $L_{k+1} = 2^{i_k} L_k$.\end{tabular}} \\ \cline{2-7}
		& Iterations & Time (sec.) & Iterations & Time (sec.)& Iterations & Time (sec.)\\ \hline
		\multicolumn{7}{|c|}{$n = 100$}     \\ \hline
		2	&400&11.67684&5 &0.14099 &40 &1.13174  \\
		4	&800&20.13218&6 &0.15329 &80 &2.31981  \\
		8	&1600&40.18136&7 &0.18168 &160 &4.55859 \\
		16	&3200&89.0930&8 &0.20135 &320 &11.27592  \\
		32	&$>4000$&$>100$&9 &0.23584 &640 &16.20664  \\
		64	&$>4000$&$>100$&10&0.25457 &1280 &32.53708  \\ \hline \hline
		\multicolumn{7}{|c|}{$n = 250$}     \\ \hline
		2   &1000&164.46193&6&1.0216 & 100 &16.49735  \\
		4	&2000&326.25757&7&1.3804& 200& 31.96703  \\
		8	&$>3400$&$>550$&8&1.32271&400 &63.91887   \\
		16	&$>3400$&$>550$&9&1.52729&800 & 135.69973   \\
		32	&$>3400$&$>550$&10&1.686& 1600& 264.14918   \\
		64	&$>3400$&$>550$&11&1.82691&3200 & 525.27188   \\ \hline \hline
		\multicolumn{7}{|c|}{$n = 500$}     \\ \hline
		2   &$>600$&$>500$&7 &5.78455 & 200&167.49536  \\
		4	&$>600$&$>500$&8 &6.60655 &400 & 331.99859 \\
		8	&$>600$&$>500$&9 &7.34555 &$>554$ &$>500$ \\
		16	&$>600$&$>500$&10 &8.38423 & $>554$ &$>500$  \\
		32	&$>600$&$>500$&11 &9.3096 & $>554$ &$>500$ \\
		64	&$>600$&$>500$&12 &10.15076 & $>554$ &$>500$  \\ \hline
	\end{tabular}
\end{table}

Since the running time of the non-adaptive variant with fixed step-size of Algorithm 3 and the variant with non-increasing stepsize is big (as we saw from the listed results in Table \ref{tab_2}) and for more illustration of the results of the proposed Algorithm \ref{Alg:UMPModel} (with $L_{k+1} = 2^{i_k - 1} L_k$)  and to show the slope of the lines, that illustrate the running time and the number of iterations as a function of $\varepsilon$, we run only proposed Algorithm \ref{Alg:UMPModel} for $n = 100, 250, 500, 1000, 1500, 2000$ with $\varepsilon\in \left\{\frac{1}{2}, \frac{1}{4}, \frac{1}{8}, \frac{1}{16}, \frac{1}{32},\frac{1}{64}, \text{and} \;\; 5\times 10^{-i}, 10^{-i}: i=3,4,5,6\right\}$.
 (see Fig. \ref{fig1} and Fig. \ref{fig2}).

 As we see, the slope of the lines is much smaller than what is predicted by the theory, and the adaptivity of the method allows to accelerate the convergence in practice.

\begin{figure}[htp]
\begin{center}
\minipage{0.32\textwidth}
\includegraphics[width=\linewidth]{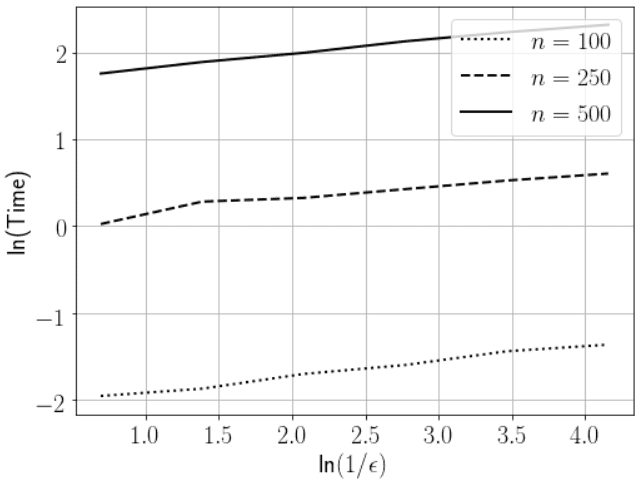}
\endminipage
\minipage{0.32\textwidth}
\includegraphics[width=\linewidth]{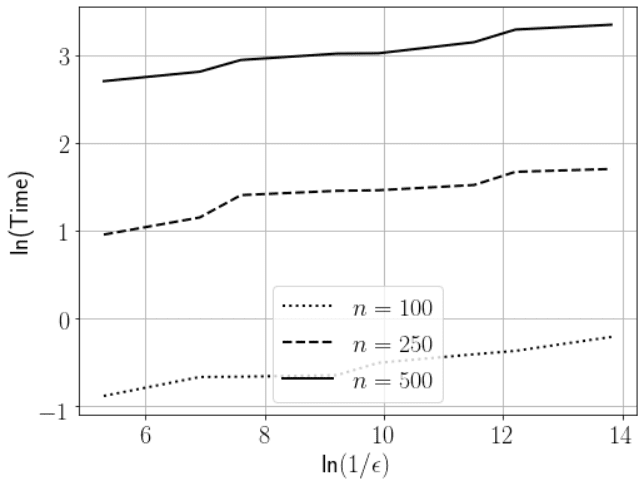}
\endminipage
\minipage{0.32\textwidth}
\includegraphics[width=\linewidth]{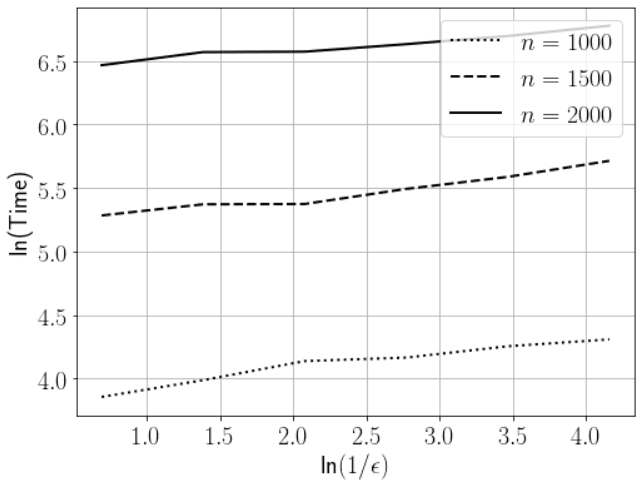}
\endminipage
\end{center}
\caption{The results of Algorithm \ref{Alg:UMPModel}. The running time as a function of $\varepsilon$, for RS-Problem, with  different values of $n$ and $\varepsilon$. The pictures on the left and the right show the results with $\varepsilon\in \{\frac{1}{2}, \frac{1}{4}, \frac{1}{8}, \frac{1}{16}, \frac{1}{32},\frac{1}{64}\}$. The picture on the middle shows the results with $\varepsilon\in \left\{5\times 10^{-i}, 10^{-i}: i=3,4,5,6\right\}$.}
\label{fig1}
\end{figure}

\begin{figure}[htp]
	\begin{center}
		\minipage{0.32\textwidth}
		\includegraphics[width=\linewidth]{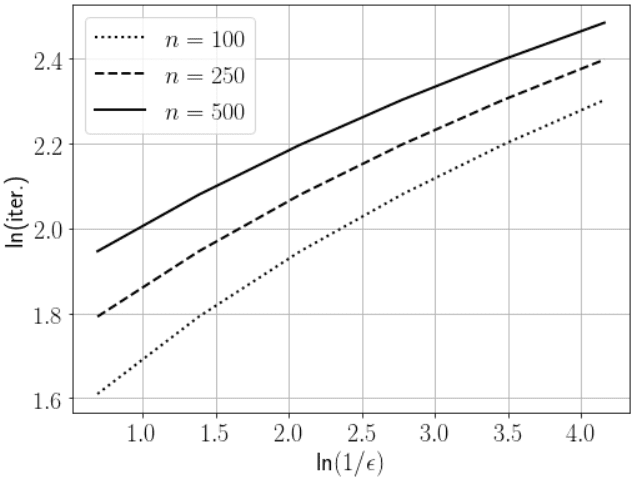}
		\endminipage
		\minipage{0.32\textwidth}
		\includegraphics[width=\linewidth]{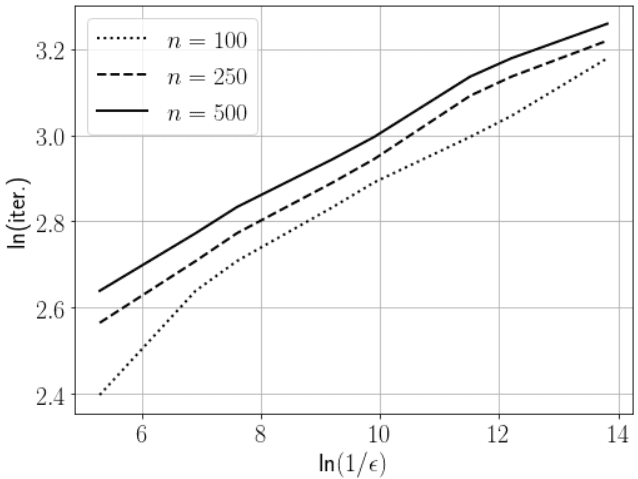}
		\endminipage
		\minipage{0.32\textwidth}
		\includegraphics[width=\linewidth]{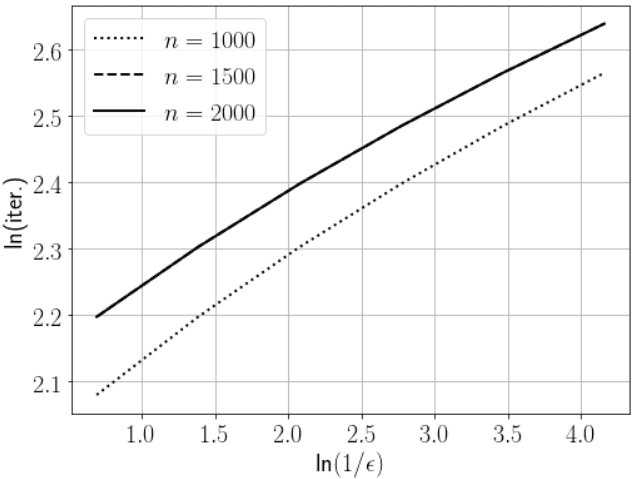}
		\endminipage
	\end{center}
	\caption{The results of Algorithm \ref{Alg:UMPModel}. The number of iterations as a function of $\varepsilon$, for RS-Problem, with  different values of $n$ and $\varepsilon$. The pictures on the left and the right show the results with $\varepsilon\in \{\frac{1}{2}, \frac{1}{4}, \frac{1}{8}, \frac{1}{16}, \frac{1}{32},\frac{1}{64}\}$. The picture on the middle shows the results with $\varepsilon\in \left\{5\times 10^{-i}, 10^{-i}: i=3,4,5,6\right\}$.}
	\label{fig2}
\end{figure}

\end{document}